\documentclass[12pt,a4paper]{article}
\usepackage{fancyhdr}
\usepackage[bottom=1.5in, top=1.2in]{geometry}

\usepackage{amssymb,amsmath,amsthm}
\usepackage[english]{babel}
\usepackage{t1enc}
\usepackage[utf8]{inputenc}
\usepackage{epsfig}
\usepackage{subfig}
\usepackage{epstopdf}
\usepackage{xcolor}
\usepackage{comment}
\usepackage{hyperref}
\usepackage[shortlabels]{enumitem}

\newtheorem{thm}{Theorem}
\newtheorem{cor}[thm]{Corollary}

\newtheorem{lem}[thm]{Lemma}
\newtheorem{prop}[thm]{Proposition}

\newtheorem{obs}[thm]{Observation}

\theoremstyle{definition}
\newtheorem{defi}[thm]{Definition}
\newtheorem{rem}[thm]{Remark}

\usepackage{indentfirst}
\def\\deg{\mathrm {\deg}}
\def\wt{\mathrm {wt}}

\def\red1{\color{red}1\color{black}}
\def\blue1{\color{blue}1\color{black}}

\def\01{0\text{-}1}

\makeatletter
\def\blfootnote{\gdef\@thefnmark{}\@footnotetext}
\makeatother

\title{Newton polygons for the non-bipartite dimer model}
\author{Vladimir Bo\v skovi\'c \thanks{Universit\'e Paris-Saclay, CNRS, CEA, Institut de Physique Th\'eorique, 91191 Gif-sur-Yvette, France\\Email address: vladimir.boskovic at ipht dot fr}}

\begin{document}

\maketitle

\begin{abstract}
    We study the dimer model on two families of non-bipartite graphs on the torus. The first family is obtained by replacing degree $3$ vertices in a bipartite torus graph with triangles, while the second consists of corner graphs associated with bipartite torus graphs. We determine the relationship between the Newton polygons of these graphs and those of the underlying bipartite graphs. We also identify the primitive edge vectors of the Newton polygons with the homology classes of the zig-zag paths.

    We further consider the marginal polynomials obtained by restricting to monomials corresponding to a boundary side of the Newton polygon. For the triangular lattice and the Fisher graph of the hexagonal lattice, we prove that these polynomials are real-rooted and obtain an explicit factorization of their roots. Finally, we introduce new local moves, including a move on non-planar graphs, that preserve the dimer partition functions up to a scale.
\end{abstract}

\section{Introduction}

The dimer model is a statistical mechanics model that studies random perfect matchings on graphs. Kasteleyn~\cite{Kasteleyn63}, and Temperley and Fisher~\cite{TemperleyFisher61} in the 1960s described the partition functions of this model for planar graphs as a determinant of a signed weighted adjacency matrix, which is usually referred to in the literature as the Kasteleyn matrix. Over the past three decades, several mathematical aspects of the dimer model have been extensively studied. In particular, the study of limit shapes was initiated~\cite{CKP01}, and deep connections to real algebraic geometry~\cite{KO06, KOS06} and integrable systems~\cite{GoncharovKenyon} were found. 

These fundamental works have initiated several different lines of research that have developed at a strong pace recently. A common assumption in these results is that the underlying graph structure is bipartite, namely that every cycle has an even length. For an introduction to the dimer model, see the lecture notes by Kenyon~\cite{KenyonLectureNotes} and by de Tilière~\cite{deTiliere2015}. 

In this paper, we investigate the combinatorial and topological properties of dimer models on two broad classes of non-bipartite graphs. We study infinite planar graphs that are periodic in the horizontal and vertical directions. The dimer model on these graphs can be studied by projecting them down to a torus obtained by identifying the opposite sides of a finite fundamental domain. Then, to this finite torus graph, we can associate a Laurent polynomial, which is called the characteristic polynomial. When a graph is bipartite, the coefficients of this polynomial describe the dimer partition function with a prescribed homology class. As we will see later, the characteristic polynomial in the non-bipartite case carries different combinatorial information.

Kenyon, Okounkov, and Sheffield~\cite{KOS06} used Newton polygons to parametrize the ergodic Gibbs measures and encode the phase diagram of the bipartite dimer model. The interior (resp. boundary) integer points describe smooth (resp. frozen) regions of the model, while the interior non-integer points correspond to rough regions. Later on, this became a very important tool for investigating the limit shapes of many planar bipartite graphs.

In their works~\cite{KO06,KOS06}, they characterized the spectral curves of the bipartite dimer model, that is, the zeros of the characteristic polynomial. They showed that these curves are, in fact, simple Harnack curves. On the other hand, they also showed that any such curve can be obtained as the spectral curve of some bipartite dimer model. Several characterizations of such curves are given by Mikhalkin~\cite{Mikhalkin00}. As described in~\cite{MikhalkinRullgard}, simple Harnack curves are the curves that maximize the area of their amoeba among all curves with the same Newton polygon. 

Understanding the spectral curves for the non-bipartite dimer model has been a widely open question. Cimasoni and Duminil-Copin~\cite{CimasoniDuminilCopin}, and Li~\cite{ZLi} showed that when the non-bipartite graphs are taken to be Fisher graphs, with weights related to the ferromagnetic Ising model, their spectral curves remain simple Harnack. On the other hand, the curves derived from the anti-ferromagnetic Ising model appear to have non-Harnack properties. These curves associated with isoradial graphs were recently studied by de Tilière and Rey~\cite{deTiliereRey} and classified in the genus one case.

Boutillier and de Tilière~\cite{BoutillierdeTiliere2010} showed that the Newton polygons of the dimer characteristic polynomials related to the critical Ising model on isoradial graphs are equal to the Newton polygon of the Laplacian characteristic polynomials. In their work, they have already observed certain properties of the non-bipartite dimer characteristic polynomial, which will be useful throughout this paper.

The Newton polygons played another important role in the work of Goncharov and Kenyon~\cite{GoncharovKenyon}, in which they introduced a discrete integrable system associated with a Newton polygon. In their construction, they identified a family of minimal bipartite torus graphs whose characteristic polynomials have the same Newton polygon, and they classified the local moves that relate such graphs. More recently, Galashin and George~\cite{GalashinGeorge} extended their result to a more general family of move-reduced graphs, which have the same decorated Newton polygon, where the decoration arises from the partitions of the boundary sides of the polygon.

To formulate our results, we now introduce some necessary notions. A \emph{zig-zag path} on a torus graph $\Gamma$ is defined as an oriented path that turns alternately maximally left and maximally right. A graph is \emph{isoradial} if, when lifted to its universal cover, there are no self-intersecting zig-zag paths and no two zig-zag paths intersect more than once (that is, they can share at most one edge). Since zig-zag paths are on the torus graph, they describe a closed loop, and one can associate a homology class with such loops. For more details, see Section~\ref{trunc_graphs}. 

Let $P(z,w)$ be a Laurent polynomial. Its \emph{Newton polygon} denoted by $N(P)$, is defined as the convex hull of integer points $(i,j) \in \mathbb{Z}^2$ such that $z^i w^j$ is a non-zero monomial in $P(z,w)$. If the polynomial $P(z,w)$ is the \emph{dimer characteristic polynomial} of a torus graph $\Gamma$, then we denote the Newton polygon by $N(\Gamma)$. See Section \ref{Kast_matrix} for a precise definition of the characteristic polynomial.

Let $V_3 = \{v \in V(\Gamma): \deg(v) = 3\}$ and $S$ be a subset of $V_3$. Let $\Gamma_{\Delta_{S}}$ be the graph obtained by replacing all the vertices of $S$ with triangles. If $S = V(\Gamma)$, then we simply write $\Gamma_\Delta$. See Definition \ref{const of gamma delta} for a more precise definition. See also Figure \ref{fig:hex_fish_cor} for an example. Now, we can state the first of our main results.

\begin{thm}
    Let $\Gamma$ be an isoradial bipartite torus graph. Then the Newton polygons of $\Gamma$ and $\Gamma_{\Delta_{S}}$ are the same for any $S \subseteq V_3$. Furthermore, if $\Gamma$ is a $3$-valent graph, then the primitive edge vectors of the Newton polygon $N(\Gamma_\Delta)$ are obtained from the homology classes of zig-zag paths of $\Gamma_{\Delta}$.
\end{thm}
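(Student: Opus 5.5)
The plan has two parts: we compare $N(\Gamma)$ with $N(\Gamma_{\Delta_S})$ through the combinatorial meaning of the coefficients of the characteristic polynomial, and we handle the zig-zag statement by tracking how zig-zag paths change under the triangle replacement.

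\textbf{The Newton polygons.} For a non-bipartite torus graph, $P(z,w)=\det K(z,w)$ expands over cycle covers built from doubled edges and cycles. Following the observation of Boutillier and de Tili\`ere, $P(z,w)$ can be written as a signed sum over superpositions of two dimer configurations, i.e.\ families of disjoint even cycles and double edges, weighted by $z^{h_x}w^{h_y}$. The extreme monomials, the vertices of $N$, should only receive contributions of a single sign, so no cancellation occurs there.

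The key step is a weight- and homology-preserving correspondence between dimer configurations near a triangle and those near the original vertex $v$. In a perfect matching of $\Gamma_{\Delta_S}$, each triangle at $v$ has exactly one vertex matched outward, because the other two are matched by the triangle edge joining them. This gives a bijection between matchings of $\Gamma_{\Delta_S}$ and matchings of $\Gamma$ that preserves homology. Homology is measured by crossings with fixed dual cycles, which we may route away from the triangles.

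The difficulty is that on $\Gamma_{\Delta_S}$ the superposition of two matchings may contain odd cycles, or cycles winding through a triangle in new ways. We therefore need to show two things:
\begin{enumerate}[(i)]
\item the set of homology classes realized by pairs of matchings, that is, the support of $P$, has the same convex hull as for $\Gamma$;
\item for $\Gamma$ isoradial, the corner monomials of $N(\Gamma)$ do not cancel in $\Gamma_{\Delta_S}$.
\end{enumerate}
For the inclusion $N(\Gamma_{\Delta_S})\subseteq N(\Gamma)$, the bijection on matchings shows that height changes of $\Gamma_{\Delta_S}$ coincide with those of $\Gamma$. For the reverse inclusion, we use that in the isoradial case each corner of $N(\Gamma)$ comes from a unique extremal (frozen) matching, by Kenyon--Okounkov--Sheffield and Goncharov--Kenyon. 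Its image in $\Gamma_{\Delta_S}$ is then also unique, so that coefficient is a single nonzero term.

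\textbf{The zig-zag statement.} Assume $\Gamma$ is $3$-valent, so $S=V$. Each zig-zag path of $\Gamma$ passing through $v$ along edges $e_1,e_2$ turns into a zig-zag path of $\Gamma_\Delta$ that traverses one triangle edge. Its homology class is unchanged, and the triangle edges create no new non-contractible zig-zag paths, only small contractible ones around triangles, or none at all.

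By the first part $N(\Gamma_\Delta)=N(\Gamma)$. For isoradial (minimal) bipartite $\Gamma$, the primitive edge vectors of $N(\Gamma)$ are the homology classes of its zig-zag paths, rotated by $90^\circ$, by Goncharov--Kenyon. The claim for $\Gamma_\Delta$ then follows.

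The main obstacle is step (ii), the absence of cancellation at the corners in the non-bipartite setting. The sign of a cycle-cover term depends on the Kasteleyn orientation and on the parity of cycle lengths. To settle it, I would first try to choose the Kasteleyn orientation on $\Gamma_\Delta$ by extending one on $\Gamma$ with clockwise-odd triangles. I would then check directly that the unique extremal configuration contributes $\pm1$ times a nonzero weight.
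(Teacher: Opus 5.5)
Your treatment of the zig-zag statement is essentially the paper's argument, namely Proposition~\ref{homologies_gamma_delta} combined with Corollary~\ref{NPs and homology classes}. There is one small inaccuracy: a zig-zag path of $\Gamma_\Delta$ crossing a triangle uses two short edges, visiting all three of its vertices, not one. The real problems are in the equality $N(\Gamma_{\Delta_S})=N(\Gamma)$, and both inclusions fail as proposed.

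\textbf{The inclusion $N(\Gamma_{\Delta_S})\subseteq N(\Gamma)$.} Your framework does not apply to non-bipartite torus graphs. $K(z,w)$ is not skew-symmetric; one only has $K_{vu}(z,w)=-K_{uv}(z^{-1},w^{-1})$. So $P$ is the signed sum over all of $\mathcal L(\Gamma)$, and only \emph{topologically trivial} odd loops cancel (Lemma~\ref{char_poly_properties}(b)). A non-trivial loop of $\Gamma_{\Delta_S}$ can cross a triangle using one or two short edges, so it can have odd length. Hence $P$ is not a sum over superpositions of two matchings.

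Your bijection of matchings is also false. A triangle can have all three vertices matched outward: in $F_{1,1}=(H_{1,1})_\Delta$, the matching $\{a,b,c\}$ produces the constant term $a^2b^2c^2$. In general, matchings of $\Gamma_{\Delta_S}$ correspond to subgraphs of $\Gamma$ with degree $1$ or $3$ at vertices of $S$, not to matchings of $\Gamma$. So nothing in your argument bounds the support.

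What works is a crossing count:
\begin{enumerate}[(1)]
\item Make the chosen side vertical by an $SL_2(\mathbb Z)$ change of basis.
\item Use isoradiality (Lemmas~\ref{vertical zig-zag path} and~\ref{fundamental_domain matching}) to pick a fundamental domain whose vertical boundary meets exactly the $k=\sum_i\max(\alpha_i,0)$ edges of one zig-zag path of class $(0,1)$.
\item Observe that in any $\gamma\in\mathcal L(\Gamma_{\Delta_S})$, at most one vertex of each triangle carries an outgoing long loop edge. Otherwise both such vertices would need their incoming edge from the third vertex.
\item Conclude that at most $k/2$ loop edges cross left to right, and repeat for every side.
\end{enumerate}

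\textbf{The reverse inclusion.} This cannot rest on a unique extremal configuration, because corner coefficients of $\Gamma_{\Delta_S}$ are genuinely sums with opposite signs. In $F_{1,1}$ the coefficient of the corner monomial $z$ is $ab(1-c^2)$. The $6$-cycle $2\to4\to6\to5\to3\to1\to2$ contributes $ab$. The $4$-cycle $2\to4\to5\to3\to2$ together with the double edge on the $c$-edge $\{1,6\}$ contributes $-abc^2$.

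At the positive weight $c=1$ this coefficient vanishes. So no choice of Kasteleyn orientation or sign check can yield a single term equal to $\pm1$ times a nonzero weight. The equality holds only for generic weights, and non-vanishing must be shown for the coefficient as a polynomial in the weights.

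The paper does this in three steps:
\begin{enumerate}[(1)]
\item An extremal class of $N(\Gamma)$ is realizable (Lemma~\ref{ext_homology}).
\item A realizable configuration lifts to $\Gamma_{\Delta_v}$: replace $u_1vu_2$ by $u_1v_1v_3v_2u_2$, or replace a double edge at $v$ by two double edges.
\item Setting all weights off the lifted configuration to zero isolates one nonzero term (Lemma~\ref{nonvanishing_coeffs}).
\end{enumerate}
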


We discuss its proof and related results in Section \ref{trunc_graphs}.

\begin{figure}[h!]
    \centering
    \includegraphics[width=0.95\textwidth]{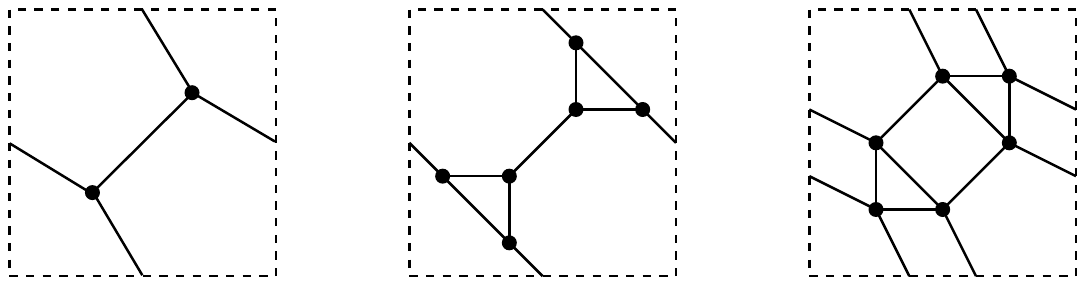}
    \caption{\label{fig:hex_fish_cor} From left to right: the hexagonal lattice $H$, the hexagonal Fisher graph $H_\Delta$, and the hexagonal corner graph $C_H$.}
\end{figure}

The next result goes in the same direction, but for a different family of graphs. We define \emph{corner graphs} by placing a \emph{corner} for every pair of a vertex and a face adjacent to it. Then we connect two corners if they are neighbors around a vertex or a face. The final graph is a $4$-valent graph, which is non-bipartite if it contains a vertex of odd degree. These graphs appeared in the work of Chelkak~\cite{Chelkak} in the context of the Ising model. See Figure \ref{fig:hex_fish_cor} for an example of the corner graph applied to the hexagonal lattice. This example was previously considered in~\cite{KenyonSunWilson}, where a certain property of its spectral curve was shown. 

\begin{thm} \label{mainthm2}
    Let $\Gamma$ be an isoradial bipartite torus graph, and let $C_\Gamma$ be its corner graph. Then $N(C_\Gamma) = 2 N(\Gamma)$ and the primitive edge vectors of $N(C_\Gamma)$ are obtained from the homology classes of its zig-zag paths.
\end{thm}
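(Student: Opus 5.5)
We prove Theorem~\ref{mainthm2} in two parts: first the zig-zag paths of $C_\Gamma$, then the Newton polygon by a two-sided inclusion.

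\textbf{Zig-zag paths of $C_\Gamma$.} Each edge $e$ of $\Gamma$ is bordered, at each endpoint, by two corners, one for each face adjacent to $e$. We trace a zig-zag path of $C_\Gamma$ locally through these corners. Our aim is to show that the zig-zag paths of $C_\Gamma$ come in two families, each a parallel copy of a zig-zag path of $\Gamma$, running on either side of it. Hence every zig-zag class $[\gamma]$ of $\Gamma$ occurs exactly twice, and $C_\Gamma$ has no other zig-zag classes.

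For a bipartite isoradial graph, Goncharov--Kenyon and Kenyon--Okounkov show that the primitive edge vectors of $N(\Gamma)$ are the rotated classes $[\gamma]$, taken in cyclic order of slope. So the boundary of a polygon with edge vectors given by the doubled multiset of classes is exactly $2N(\Gamma)$, up to translation. This reduces the theorem to the identity $N(C_\Gamma)=2N(\Gamma)$.

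\textbf{Lower bound: $2N(\Gamma)\subseteq N(C_\Gamma)$.} Each vertex $v$ of $C_\Gamma$ is a corner $(u,f)$, so each vertex of $\Gamma$ and each face of $\Gamma$ carries a cycle of corners. Given a pair of perfect matchings $M_1,M_2$ of $\Gamma$, we build a perfect matching of $C_\Gamma$ by a local rule. At a vertex $u$ matched along an edge $e$, we pair the corners of $u$ around the vertex cycle, with the parity of the pairing fixed by $e$ and by a second edge. We arrange the construction so that the height change of the resulting matching equals the sum of the height changes of $M_1$ and $M_2$. This realizes every vertex of $2N(\Gamma)$ as the homology class of some matching.

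We then need the corresponding coefficients of the characteristic polynomial to be nonzero. For this we will use the Boutillier--de Tilière observation that the coefficients at corners of the Newton polygon of a non-bipartite graph count matchings with a fixed class. At an extremal class this matching is unique up to sign, so no cancellation occurs.

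\textbf{Upper bound: $N(C_\Gamma)\subseteq 2N(\Gamma)$.} We bound the support of the characteristic polynomial via superpositions of pairs of matchings, or equivalently via the Pfaffian expansion. For each side of $N(\Gamma)$, with zig-zag class $\gamma$, we show that the intersection number of any matching of $C_\Gamma$ with the dual class is at most twice the corresponding support value for $\Gamma$. Since the two parallel copies of $\gamma$ in $C_\Gamma$ act as "cuts", each copy should contribute at most the bound coming from $\Gamma$.

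The main obstacle is this upper bound. Here the non-bipartiteness of $C_\Gamma$ prevents a direct height-function argument. We would first try to use the two zig-zag copies as a pair of one-sided bounds, computing the flux of a matching across each zig-zag curve locally and showing that it is at most the corresponding flux in $\Gamma$. If this fails, the fallback is the dimer/Ising correspondence on corner graphs: relate the determinant for $C_\Gamma$ to a product of two polynomials, each with Newton polygon $N(\Gamma)$. Newton polygons are additive under multiplication, so this would give $N(C_\Gamma)=N(\Gamma)+N(\Gamma)=2N(\Gamma)$ directly.
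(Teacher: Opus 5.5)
Your zig-zag analysis and the reduction of the second assertion to $N(C_\Gamma)=2N(\Gamma)$ agree with the paper (Proposition~\ref{homologies C_Gamma} together with Corollary~\ref{NPs and homology classes}). However, both inclusions have genuine gaps.

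\textbf{Upper bound.} You do not prove $N(C_\Gamma)\subseteq 2N(\Gamma)$; you only list strategies. The fallback, factoring the determinant for $C_\Gamma$ into two polynomials with Newton polygon $N(\Gamma)$, has no basis for generic weights: the reducibility observed by Kenyon, Sun and Wilson holds only for a special choice of weights. Your sketch also never uses isoradiality, and this is exactly where it is needed. The missing idea is a specific choice of fundamental domain followed by a count:
\begin{enumerate}[(i)]
\item After an $SL_2(\mathbb{Z})$ change making a side of $N(\Gamma)$ vertical, redraw $\Gamma$ so that the vertical boundary crosses exactly the edges of one zig-zag path $Z_1$ of class $(0,1)$ and no other edge (Lemma~\ref{vertical zig-zag path}).
\item Isoradiality forces every zig-zag path of class $(\alpha,\beta)$ with $\alpha>0$ to share exactly $\alpha$ edges with $Z_1$. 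Hence $Z_1$ has exactly $k=\sum_i\max(\alpha_i,0)$ edges, where $k$ is the width of $N(\Gamma)$ (Lemma~\ref{fundamental_domain matching}).
\item The $z$-exponent of any configuration is then bounded by counting crossing edges. This gives at most $k/2$ for $\Gamma$. For $C_\Gamma$ it gives at most $k$, because at most two non-trivial loops of a configuration can visit the corners of a single vertex of $\Gamma$ (Observation~\ref{at most 2 loops}).
\end{enumerate}
This yields $N(C_\Gamma)\subseteq[-k,k]\times\mathbb{R}$, and one repeats the argument for every side.

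Such bounds must also be stated for collections of oriented loops, not for matchings. By Lemma~\ref{char_poly_properties}(a), the monomials of $\det K(z,w)$ are indexed by elements of $\mathcal{L}(C_\Gamma)$. These may contain odd non-trivial loops, which are not superpositions of two matchings. A single matching of the non-bipartite graph $C_\Gamma$ carries no homology class in this expansion. The Boutillier--de Tili\`ere input used here is this loop expansion, not a count of matchings.

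\textbf{Lower bound.} Your non-cancellation step is false. At an extremal class of a non-bipartite graph, the coefficient is in general a sum of several configurations with opposite signs. For $F_{1,1}$ the coefficient of $z$ is $ab(1-c^2)$, which vanishes at $c=1$ even though $(1,0)$ is a vertex of $N(F_{1,1})$. Only generic non-vanishing holds (Lemma~\ref{nonvanishing_coeffs}): choose one realizable configuration, replace its trivial loops by double edges, and set all other edge weights to zero, so that this configuration is the only survivor.

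You also need an explicit element of $\mathcal{L}(C_\Gamma)$ of class $(2\alpha,2\beta)$ all of whose non-trivial loops have the same class. Your corner-pairing rule is too vague to verify this. Moreover, ``the sum of the height changes of $M_1$ and $M_2$'' is not obviously twice the class of the loops of $M_1\cup M_2$. The paper proceeds as follows. It starts from a realizable configuration $\gamma$ at an extremal point of $N(\Gamma)$ (Lemma~\ref{ext_homology}) and extends its non-trivial loops to a cycle-rooted spanning forest. In $C_\Gamma$ it then takes the two contour loops around each cycle-rooted tree (Lemma~\ref{loops without double edges on corner graphs}). These loops cover every corner, use no double edges, and each has the class of the underlying loop of $\gamma$, so the total class is exactly $(2\alpha,2\beta)$.
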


The proof of Theorem \ref{mainthm2}, as well as other results about corner graphs, can be found in Section \ref{corner_graphs}.

In Section \ref{marginal_polys}, we study the \emph{marginal polynomials} of several families of graphs with an arbitrarily sized fundamental domain. These polynomials are defined as one-variable polynomials obtained by restricting the characteristic polynomial to the monomials corresponding to one boundary side of the Newton polygon. For bipartite graphs, the marginal polynomials have real roots, which is a consequence of the result that the spectral curves for the bipartite dimer model with positive real edge weights are simple Harnack~\cite{KO06,KOS06}. For non-bipartite graphs, this question has remained open. Here, we answer it for a large family of graphs.

\begin{thm}
    Let $\Gamma$ be an isoradial bipartite torus graph, and let $S \subseteq V(\Gamma)$ be a subset of vertices of degree $3$. Then the marginal polynomials of $N(\Gamma_{\Delta_S})$ attached to the characteristic polynomial are real-rooted.

    Moreover, let $N$ be the Newton polygon associated with the dimer model on a fundamental domain of arbitrary size of the triangular lattice or the Fisher graph of the hexagonal lattice. Then, the marginal polynomials of the characteristic polynomial attached to $N$ are real-rooted, and there is an explicit formula for their roots in factorized form in terms of the zig-zag paths with prescribed homology.
\end{thm}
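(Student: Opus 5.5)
First I would reduce the first assertion to the bipartite case. The characteristic polynomial $P_{\Delta_S}(z,w)$ of $\Gamma_{\Delta_S}$ is the Pfaffian-type combination $\sum_{\theta} \pm \mathrm{Pf}(K_\theta(z,w))$, and its coefficient on $z^iw^j$ counts signed weighted configurations with prescribed homology. By the first main theorem, $N(\Gamma_{\Delta_S}) = N(\Gamma)$. I would restrict to a boundary side $E$ of this polygon with primitive direction $v$. The monomials on $E$ should then correspond to extremal configurations, namely dimer covers whose homology class maximizes a linear functional $\ell$ orthogonal to $E$.

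The key local observation concerns how an extremal matching meets a triangle replacing a degree-$3$ vertex $u$. Exactly one of the three triangle vertices is matched outward, because the other two are matched to each other. This gives a weight-preserving bijection, up to the local factor coming from the internal triangle edge, between dimer covers of $\Gamma_{\Delta_S}$ and those of $\Gamma$ in which the same outgoing edge is used at $u$. Dimer covers whose triangles are all of this type are projected injectively onto covers of $\Gamma$. If in addition the signs agree, then the marginal polynomial of $\Gamma_{\Delta_S}$ is the marginal polynomial of $\Gamma$ with modified (still positive) edge weights. Real-rootedness would then follow from the Harnack property of bipartite spectral curves with positive weights (Kenyon--Okounkov--Sheffield). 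The obstacle is that $\Gamma_{\Delta_S}$ has additional cycle configurations, since odd cycles through triangles could contribute to $P$ via the Pfaffian expansion. I would show that these never reach the boundary side. The plan is to compare $\ell$-values: in the boundary-extremal regime, configurations are unions of zig-zag-parallel paths in direction $v$, and isoradiality forces every contributing transition graph to consist of disjoint translates of zig-zag paths of homology $\pm v$. A triangle can only reroute such a path without changing its homology, so it preserves the extremal count. The sign analysis uses the Kasteleyn orientation choice of Section~\ref{trunc_graphs}, in which the triangles are oriented clockwise.

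For the explicit statement, I would use the structure just obtained. The boundary monomials along $E$ are indexed by the number $k$ of zig-zag paths of homology class $v$ (or its parallel class) "flipped". Here a flip means replacing the extremal matching along such a path by its alternate matching. Because the zig-zag paths of a fixed class are disjoint on the torus, the marginal polynomial factorizes as
\[
P_E(t) \;=\; c\prod_{\gamma}\bigl(a_\gamma + b_\gamma\, t\bigr),
\]
where $\gamma$ runs over zig-zag paths with homology $v$, and $a_\gamma,b_\gamma$ are the products of the alternating edge weights along $\gamma$. For the Fisher graph, a zig-zag path also passes through triangle edges, and the two choices at each triangle contribute a factor that I would compute directly. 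For the triangular lattice on an $m\times n$ fundamental domain, I would write the zig-zag paths as straight lines in the three lattice directions and compute the factors explicitly. Up to the overall sign, which I fix using the Kasteleyn sign $(-1)^{\cdots}$ of the homology class, the roots are $t=-a_\gamma/b_\gamma<0$. These are explicitly real.

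The main obstacle I expect is the sign bookkeeping. The non-bipartite characteristic polynomial mixes the four Pfaffians with signs depending on parity of homology. I must verify that on a boundary side all contributions come with the same sign pattern $\epsilon(i,j)$, which is linear in the position along the side. Then the substitution $t\mapsto \pm t$ turns $P_E$ into a polynomial with positive coefficients that is a product of linear factors. I would try to establish this first for the triangular lattice by direct computation with a periodic Kasteleyn orientation, and then generalize via the triangle-insertion bijection.
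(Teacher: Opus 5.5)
There is a genuine gap, and it starts with the model you expand. Here $P(z,w)=\det K(z,w)$ with $K_{v,u}(z,w)=-K_{u,v}(z^{-1},w^{-1})$. It is not the four-Pfaffian combination, which computes the torus partition function. By Lemma~\ref{char_poly_properties}(a), the coefficient of $z^iw^j$ is a signed sum over collections $\gamma\in\mathcal L$ of oriented loops and double edges of homology $(i,j)$, with sign $(-1)^{|V|-\ell(\gamma)}$, where double edges count as loops. Boundary monomials are therefore not ``extremal dimer covers''. Your triangle claim also fails even for dimer covers, since all three vertices of a triangle can be matched outward.

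The real obstruction is not odd trivial loops, which cancel, but this sign. In $F_{m,n}$, for instance, an $a$-edge next to a column of $c$-edges either carries a double edge or has both endpoints absorbed by the non-trivial loop. The two options differ by one loop, which produces a factor $1-a^2$. Already for $F_{1,1}$ the two coefficients on the side from $(1,0)$ to $(1,-1)$ are $ab(1-c^2)$ and $bc(1-a^2)$. The roots $(-1)^m\prod_i (a_{i,j}-a_{i,j}^{-1})/(c_{i,j}-c_{i,j}^{-1})$ of Theorem~\ref{rootsF} take either sign as the weights vary. So the marginal polynomial of $\Gamma_{\Delta_S}$ is in general not that of $\Gamma$ with modified positive weights, and the Harnack property of Kenyon--Okounkov--Sheffield cannot be imported.

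Your sign plan fails for the same reason: there is no uniform pattern $\epsilon(i,j)$ making $P_E(\pm t)$ have positive coefficients. This fails even for $T_{m,n}$ with $n$ odd. There the roots $u_j=(-1)^{(j-1)n}\prod_i c_{j,i}/b_{j,i}$ of $P_U$ have mixed signs, and the roots $\prod_j a_{j,i}/b_{j,i}$ of $P_R$ are positive, not negative.

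What gives real-rootedness is a factorization into real linear factors, with no positivity at all. You write such a product down, but disjointness of the zig-zag paths of one class does not imply it. The missing step is a forcing argument in the loop model:
\begin{enumerate}
\item After an $SL_2(\mathbb Z)$ change, Lemmas~\ref{vertical zig-zag path} and~\ref{fundamental_domain matching} let the vertical boundary cross only the edges of one zig-zag path of class $(0,1)$. The horizontal boundary $\gamma_x$ is chosen to cross each such path $Z_i$ exactly once.
\item Maximal $z$-degree then forces every configuration on that side to use, on each $Z_i$, either all its $a$-edges or all its $b$-edges.
\item In $\Gamma_{\Delta_S}$ this becomes the statement that the disjoint paths from $U_i^-$ to $U_i^+$ can only be connected in the two cyclic patterns.
\item Each block contributes a polynomial in $w$ of degree at most one by the choice of $\gamma_x$, hence exactly one since $N(\Gamma_{\Delta_S})=N(\Gamma)$. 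The rest of the graph contributes a $w$-independent constant.
\end{enumerate}
The explicit factors must likewise be computed in this signed loop model. For the Fisher graph each weight $\wt$ is replaced by $\wt-\wt^{-1}$, so the roots are not alternating products of the original weights.
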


Section \ref{local_moves} introduces a somewhat different question, not necessarily restricted to torus graphs and the notion of Newton polygons. We are interested in studying the local moves (or transformations) on graphs that preserve the dimer partition functions up to a scale. We introduce two new local moves on non-bipartite planar graphs that respect this property. They can be found in Figures \ref{fig:localmove1} and \ref{fig:nbspider}. 

We also discuss the relation between the moves that preserve the dimer partition function and the moves that preserve the characteristic polynomials. These consequently generalize the main results of the paper by enlarging the class of graphs to which these results apply. We finish the section by introducing yet another move, a \emph{cross move}, which, under certain conditions, forces a graph to be non-planar.

\section{Preliminaries}

The goal of this section is to introduce several notions that will be used throughout the paper. These include the Kasteleyn matrix and the modified Kasteleyn matrix, which are used to compute the dimer partition function for planar graphs and the dimer characteristic polynomial for torus graphs, respectively. 

These notions have a slightly simpler definition when the graph is bipartite. In that case, the bipartite Kasteleyn matrix is essentially the top right block of the general Kasteleyn matrix, viewed as a $2 \times 2$ block matrix. All the information of the general Kasteleyn matrix is contained in that top-right block when the graph is bipartite, so they are basically equivalent.

We also recall several important combinatorial properties that are carried by the characteristic polynomials. Finally, we introduce the notions of the Newton polygon and zig-zag paths and describe how they are related.

\subsection{Kasteleyn matrix} \label{Kast_matrix}

A \emph{dimer cover}, or a perfect matching, is a subset of edges of a graph such that each vertex is incident to a unique edge. Let $G = (V,E)$ be a finite graph. Let $M(G)$ be the set of all dimer covers in $G$. We assign positive weights to edges via a function $\wt: E \rightarrow \mathbb{R}^{+}$. For a given dimer cover $M \in M(G)$, the weight of $M$ is obtained as the product \[\wt(M) = \prod_{e \in M} \wt(e).\] 
The \emph{dimer partition function} is the weighted sum over all dimer covers in $G$,

\[Z(G) = \sum_{M \in M(G)} \wt(M).\]

We further define a \emph{Boltzmann probability measure} for this model such that, for a dimer cover $M \in M(G)$, we have 

\[\mathbb{P}(M) = \frac{\prod_{e \in M} \wt(e)}{Z(G)}.\]

By multiplying every edge incident to a given vertex by $\lambda > 0$, the probability measure does not change, as every weight of a dimer cover is multiplied by $\lambda$, as well as the partition function. These operations are called \emph{gauge transformations}. 

A fundamental tool for studying the planar dimer model is the \emph{Kasteleyn matrix}. Suppose from now on that $G$ is a planar graph. We orient the edges of $G$ while respecting the so-called \emph{clockwise-odd rule}: for each inner face of $G$, there is an odd number of edges that are oriented clockwise. 

Let $\mathcal{E}(u,v)$ be the set of edges between the vertices $u$ and $v$. Using the chosen orientation of the graph $G$, we introduce the \emph{Kasteleyn sign} of an edge $e \in \mathcal{E}(u,v)$, denoted by $\kappa(e)$, which is equal to $1$ (resp. $-1$) if the edge $e$ is oriented from $u$ to $v$ (resp. from $v$ to $u$).  We define the Kasteleyn matrix $K$ as a signed adjacency matrix of $G$. For all $u,v \in V$:

\begin{align*}
K_{u,v} = \sum_{e \in \mathcal{E}(u,v)}\kappa(e) \wt(e).
\end{align*}

The partition functions for the dimer model were computed by Kasteleyn~\cite{Kasteleyn63}, and Temperley and Fisher~\cite{TemperleyFisher61} in terms of the Kasteleyn matrix
\[Z(G) = \sqrt{\det K}.\]

\subsubsection*{Torus graphs}

We now generalize the notion of the Kasteleyn matrix to graphs on a torus. A graph $\Gamma = (V, E)$ drawn on the torus $\mathbb{T} = \mathbb{R}^2/\mathbb{Z}^2$ is a \emph{torus graph} if every connected component of the complement of $\Gamma$ in the torus is a topological disk. Let $R$ be a fundamental domain of rectangular shape. From now on, we assume that $\Gamma$ always has a perfect matching, multiple edges are allowed, but self-loops are not.

\begin{figure}[h!]
    \centering
    \includegraphics[width=0.75\textwidth]{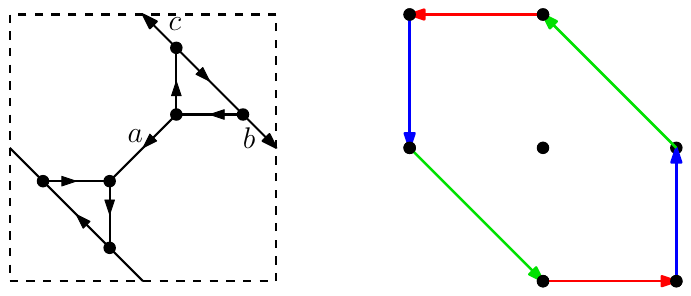}
    \caption{\label{fig:Fisher1x1} Fisher graph $F_{1,1}$ with its Kasteleyn orientation, positive real edge weights $a,b,c$ and the remaining edges have weight $1$, and the Newton polygon for its characteristic polynomial.}
\end{figure}

We modify the definition of the Kasteleyn matrix to make it depend on the variables $z, w \in \mathbb{C}^*$. For each edge $e$ connecting vertices $u$ and $v$ in $\Gamma$, there are three contributions: its weight $\wt(e)$, its Kasteleyn sign $\kappa(e)$, and a monomial $\chi(e;z,w)$ that captures how $e$ crosses the fundamental domain. This monomial is defined by \[\chi(e;z,w) = z^{\alpha(e)}w^{\beta(e)},\] 
where $\alpha(e)$ is the signed intersection number of $e$ with the vertical boundary (positive if $e$ crosses from left to right, negative if from right to left, and zero otherwise), and similarly $\beta(e)$ is the signed intersection number of $e$ with the horizontal boundary (positive if $e$ crosses from bottom to top, negative if from top to bottom, and zero otherwise). Altogether, we write \[\nu(e) = \kappa(e) \wt(e) \chi(e;z,w).\]

We denote by $K(z,w)$ the \emph{modified Kasteleyn matrix}, and its entries are expressed as \[(K(z,w))_{u,v} = \sum_{e \in \mathcal{E}(u,v)}\nu(e).\]

For example, the modified Kasteleyn matrix of the Fisher graph $F_{1,1}$, shown in Figure \ref{fig:Fisher1x1}, is as follows:

\[K(z,w) =
\begin{pmatrix}
    0 & 1 & -1 & 0 & 0 & cw \\ 
    -1 & 0 & 1 & bz & 0 & 0 \\
    1 & -1 & 0 & 0 & a & 0 \\ 
    0 & -\frac{b}{z} & 0 & 0 & 1 & -1 \\ 
    0 & 0 & -a & -1 & 0 & 1 \\ 
    -\frac{c}{w} & 0 & 0 & 1 & -1 & 0
\end{pmatrix}.\]

\medskip

Notice that we chose some edge weights to be equal to one. 
This can be done without loss of generality by applying suitable gauge transformations. More generally, whenever we have a graph that has vertex disjoint copies of triangles, we can assume that their edge weights are one.

We are particularly interested in studying the \emph{characteristic polynomial}, which is obtained as the determinant of the modified Kasteleyn matrix described above,

\[P(z,w) = \det\big(K(z,w)\big).\]

The \emph{spectral curve} is the set of all $(z,w)\in(\mathbb C^*)^2$ such that $P(z,w) = 0$.

We can compute the characteristic polynomial of the Fisher graph $F_{1,1}$,

\begin{equation}\label{char for F_11}
\begin{aligned}
P(z,w) &= a^2 + b^2 + c^2 + a^2 b^2 c^2 + a b (1 - c^2) \left(z + \frac{1}{z}\right) \\
       &+ a c (1 - b^2) \left(w + \frac{1}{w} \right)
       + b c (1 - a^2) \left(\frac{w}{z} + \frac{z}{w} \right).
\end{aligned}
\end{equation}

In the following lemma, we state some fundamental properties of the characteristic polynomials that we rely on throughout the paper. It consists of the results appearing in the proof of Lemma 13 of~\cite{BoutillierdeTiliere2010}, which describes the collections of loops contributing to the characteristic polynomial. 

We denote by $\mathcal{L}(\Gamma)$ all the collections of disjoint simple oriented loops and double edges visiting all the vertices of $\Gamma$. Note that an isolated vertex is not counted as an oriented loop. Let $\gamma \in \mathcal{L}(\Gamma)$, then $\ell(\gamma)$ is the number of loops in $\gamma$, and $\gamma(v)$ is the successor of $v \in V(\Gamma)$ in the oriented loop of $\gamma$ that contains $v$.

\begin{lem} \label{char_poly_properties}~\cite{BoutillierdeTiliere2010}
    Let $\Gamma$ be a torus graph that contains a perfect matching, and denote by $P(z, w)$ its characteristic polynomial. The following properties hold:
    \begin{enumerate}[(a)]
        \item  We can write
        \[P(z,w) = \sum_{\gamma \in \mathcal{L}(\Gamma)} (-1)^{|V(\Gamma)|-\ell(\gamma)} \prod_{e \in E(\gamma)} \nu(e).\]
        
        \item Elements $\gamma\in\mathcal L(\Gamma)$ that contain a topologically trivial loop of odd length have a zero net contribution to $P(z,w)$. 
        
        \item The non-trivial loops in $\gamma$ have parallel homology classes (either the same or opposite to each other). 
        
        \item For every $(z, w) \in (\mathbb{C^*})^2$, we have $P(z,w) = P(z^{-1}, w^{-1})$.
    \end{enumerate}
\end{lem}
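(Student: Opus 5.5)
The plan is to expand $P(z,w)=\det K(z,w)$ directly as a sum over permutations and read off all four properties from the structure of the modified Kasteleyn matrix. The basic observation is a skew-symmetry relation. For an edge $e$ between $u$ and $v$, traversing it from $v$ to $u$ flips both the Kasteleyn sign and the signed crossing numbers. Hence
\[K_{v,u}(z,w)=-K_{u,v}(z^{-1},w^{-1}).\]
It is convenient to attach $\nu$ to oriented edges, with $\nu(\bar e)=-\wt(e)\kappa(e)\chi(e;z,w)^{-1}$ for the reversal $\bar e$ of $e$.

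For (a), write $\det K=\sum_\sigma \mathrm{sgn}(\sigma)\prod_v K_{v,\sigma(v)}$ and decompose each $\sigma$ into cycles.
\begin{enumerate}[(i)]
\item Fixed points contribute $0$, since $K_{v,v}=0$ when there are no self-loops.
\item For a cycle of length at least $3$, each factor $K_{v,\sigma(v)}$ is expanded as a sum over the edges in $\mathcal E(v,\sigma(v))$. Each choice of edges is a simple oriented loop.
\item A $2$-cycle $(u\,v)$ contributes $K_{u,v}K_{v,u}$. Expanding, this is a sum over ordered pairs of edges $(e,e')$ between $u$ and $v$, one used in each direction. Such a pair is exactly a ``double edge'' (a length-$2$ loop, possibly using two parallel edges) with weight $\nu(e)\nu(\bar e')$.
\end{enumerate}
Since $\mathrm{sgn}(\sigma)=(-1)^{|V|-\#\text{cycles}}$ and the cycles of $\sigma$ correspond to the loops of $\gamma$, the formula in (a) follows. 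The only bookkeeping issue is checking that multiple edges are correctly accounted for by this expansion.

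For (b), I will build a sign-reversing involution on the configurations $\gamma$ containing a topologically trivial odd loop. Fix an ordering of $V(\Gamma)$. Among the trivial odd loops of $\gamma$, choose the one containing the smallest vertex, and reverse its orientation.
\begin{itemize}
\item Odd length means length at least $3$, so the reversed loop differs from the original. The map is therefore a fixed-point-free involution preserving $\ell(\gamma)$.
\item Under reversal, every edge of the loop contributes a factor $-1$, and its monomial is inverted.
\item Because the loop is trivial, the product of its $\chi$'s equals $1$, so the monomial is unchanged.
\item The loop has odd length, so the total sign change is $(-1)^{\text{odd}}=-1$.
\end{itemize}
Thus the contributions cancel in pairs. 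I expect this to be the main point needing care: one has to verify that the choice of loop is stable under the involution, which holds because reversal does not change the vertex set of the loop.

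For (c), the loops of $\gamma$ are pairwise disjoint simple closed curves on $\mathbb T$. A non-trivial simple closed curve on the torus has primitive homology class $(a,b)$. Two disjoint curves have algebraic intersection number $0$, so for two non-trivial loops with classes $(a,b)$ and $(c,d)$ we get $ad-bc=0$. Since both classes are primitive, this forces $(c,d)=\pm(a,b)$.

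For (d), the relation above says $K(z^{-1},w^{-1})=-K(z,w)^{T}$. Taking determinants gives $P(z^{-1},w^{-1})=(-1)^{|V|}P(z,w)$. Finally, $|V|$ is even because $\Gamma$ has a perfect matching, so $P(z^{-1},w^{-1})=P(z,w)$.
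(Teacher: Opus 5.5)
Your proposal is correct. The paper gives no proof of its own here: it defers to Boutillier--de Tili\`ere and only adds the remark that $(-1)^{|V(\Gamma)|-\ell(\gamma)}$ is the signature of a fixed-point-free permutation, which is your part (a). Your arguments for (b)--(d) are the standard complete ones: reversing an odd trivial loop gives a sign-reversing involution, disjoint non-trivial simple closed curves have primitive classes with zero algebraic intersection, and $K(z^{-1},w^{-1})=-K(z,w)^{T}$ together with $|V(\Gamma)|$ even gives (d).
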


A collection of disjoint oriented loops $\gamma$ can be seen as a permutation without fixed points, whereby a vertex is mapped to its successor in the loops. With this interpretation, the sign $(-1)^{|V(\Gamma)|-\ell(\gamma)}$ corresponds to the signature of the permutation.

\subsubsection*{Bipartite Kasteleyn matrix}

Suppose now that a planar graph $G = (V,E)$ is bipartite. Then, we can color its vertices black and white such that each edge connects one black vertex to one white vertex. We can denote its vertex set by $V = \{b_1, \dots, b_n, w_1, \dots, w_n\}$. Note that there should be an equal number of black and white vertices, otherwise, $G$ contains no perfect matchings. 

In this setting, if one orders the vertices as $b_1, \dots, b_n, w_1, \dots, w_n$, then the $2n \times 2n$ Kasteleyn matrix $K$ defined above is skew-symmetric. The bipartite Kasteleyn matrix $K^{\mathrm{bip}}$ is then defined as the $n \times n$ top-right block of $K$. If the graph $G$ is oriented such that it respects the clockwise odd rule, then it follows that:
\[Z(G) = |\det(K^{\mathrm{bip}})|.\]

The definition of the modified Kasteleyn matrix can also be adapted in a similar way to a bipartite torus graph $\Gamma$, and we denote it by $K^{\mathrm{bip}}(z,w)$. If an edge $b_iw_j \in E(\Gamma)$ crosses the vertical boundary of the fundamental domain such that the black vertex
is on its left (resp. right) side, then the weight $\wt(b_iw_j)$ is multiplied by $z$ (resp. $z^{-1}$). Similarly, we multiply the weight by $w$ (resp. $w^{-1}$) if the black vertex is below or above the horizontal boundary. The determinant of this modified bipartite Kasteleyn matrix is called the \emph{bipartite characteristic polynomial}:

\[P^{\mathrm{bip}}(z,w) = \det(K^{\mathrm{bip}}(z,w)).\]

This bipartite characteristic polynomial carries different combinatorial information than the characteristic polynomial $P(z,w)$. In fact, every coefficient of the polynomial $P^{\mathrm{bip}}(z,w)$ describes a dimer partition function with a prescribed homology class when it is superimposed on a fixed dimer cover. As we can see in the example of the non-bipartite Fisher graph $F_{1,1}$ (Equation \ref{char for F_11}), the terms contributing to a given monomial $z^iw^j$ do not all have the same sign. This differs from the bipartite case, where the terms of a monomial of a bipartite characteristic polynomial always have the same sign, making the bipartite case nicer to study.

\subsection{Newton polygon and zig-zag paths}

We recall here the notions of Newton polygons and zig-zag paths for bipartite graphs and then describe their natural extension to the non-bipartite setting.

\subsubsection*{Newton polygons for bipartite graphs}

A \emph{bipartite zig-zag path} on the bipartite graph $\Gamma$ is an oriented path that turns maximally right at a black vertex and maximally left at a white vertex. A bipartite torus graph $\Gamma$ is \emph{minimal} if, in its lift $\tilde\Gamma$ to the universal cover, none of its bipartite zig-zag paths is self-intersecting, and no two zig-zag paths intersect twice with the same orientation. These graphs were introduced in~\cite{GoncharovKenyon}, where their Newton polygons were studied.  

We say that a bipartite torus graph $\Gamma$ is \emph{isoradial} if, in its lift $\tilde\Gamma$ to the universal cover, none of its bipartite zig-zag paths is self-intersecting, and no two bipartite zig-zag paths intersect more than once. These graphs were studied by Kenyon and Schlenker~\cite{KenyonSchlenker}. See also the related notion of a critical map for the Ising model due to Mercat~\cite{Mercat01}. Clearly, every isoradial bipartite graph is minimal. A minimal bipartite graph is \emph{associated with} a convex integral polygon $N$ if the homology classes of its bipartite zig-zag paths in $\Gamma$ are obtained from the primitive edge vectors of $N$.

\begin{thm} [\cite{GoncharovKenyon}, Theorem 2.5] \label{polygons_graphs GK}
    For any convex integral polygon $N$, there exists a minimal bipartite graph $\Gamma$ associated with $N$. Moreover, any two minimal bipartite graphs associated with $N$ are related by a sequence of spider moves and contracting or expanding of $2$-valent vertices.
\end{thm}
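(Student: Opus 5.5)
Since this result is quoted from~\cite{GoncharovKenyon}, I would reconstruct its proof by passing from graphs to their \emph{zig-zag diagrams}, that is, families of oriented closed curves on the torus $\mathbb{T}$. The correspondence works as follows. Given a bipartite torus graph $\Gamma$, draw each bipartite zig-zag path as a smooth curve running along the middle of the edges it uses. Then every edge becomes a transverse crossing of two curves. Every vertex becomes a face of the curve arrangement whose boundary is coherently oriented: clockwise for black vertices, counterclockwise for white ones. Every face of $\Gamma$ becomes a face whose boundary orientations alternate. Conversely, an arrangement of oriented curves in which every complementary face is either coherently oriented or alternating determines a bipartite torus graph. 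In this dictionary, minimality of $\Gamma$ is exactly the condition that the lifted curves in $\mathbb{R}^2$ have no self-intersections and no two of them form a parallel bigon, i.e.\ meet twice with the same orientation.

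\emph{Existence.} Let $N$ be a convex integral polygon. List its primitive edge vectors $v_1,\dots,v_k$ in cyclic order, with multiplicity equal to the lattice length of each side, so that $\sum v_i=0$. For each $v_i$ choose a straight closed geodesic on $\mathbb{T}$ of homology class $v_i$, taking these geodesics in general position. Lifted to $\mathbb{R}^2$ they are straight lines. Hence no lift self-intersects, two lifts meet at most once, and parallel lifts never meet, so the arrangement already has the minimality property.

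The difficulty is that faces of a straight-line arrangement need not be coherent or alternating. I would fix this by a local surgery performed inside each bad face: isotope the curves so that, around each face, the crossings are resolved into a configuration with only coherent and alternating faces, for instance by passing to a triple-crossing diagram and then opening each triple point. The point to check is that this surgery changes neither homology classes nor the no-bigon property. The relation $\sum v_i=0$ guarantees that black and white vertices come out balanced, and Theorem~\ref{polygons_graphs GK}'s ``associated with $N$'' condition then holds by construction.

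\emph{Uniqueness up to moves.} Take two minimal graphs associated with $N$. Their zig-zag diagrams consist of curves with the same multiset of homology classes and no bad bigons in the lift. I would show that any such minimal diagram can be isotoped to the straight-line model through minimal diagrams, using only two kinds of elementary changes:
\begin{enumerate}[(i)]
\item triangle moves, i.e.\ Reidemeister III moves, which on the graph side are spider moves;
\item creation or removal of $2$-valent vertices, which correspond to inserting or deleting trivial coherent faces.
\end{enumerate}
The tool would be a bigon and triangle-clearing argument in the spirit of Thurston's triple-crossing diagrams and Postnikov's reduced plabic graphs. Since lifts meet at most once with a given orientation, any innermost obstruction to straightening is an empty triangle, and an empty triangle can be flipped.

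\emph{Main obstacle.} The hard step is this last one: proving that the straightening isotopy can always be decomposed into triangle moves without ever creating a self-intersection or a parallel bigon. In particular, one must handle curves of the same homology class, which may be interleaved in different orders. For these I would first show that any two admissible orders are connected by moves that exchange adjacent parallel curves, and then use an induction on the total number of crossings in a fundamental domain above the minimal number.
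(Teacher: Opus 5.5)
The paper does not reprove this result. It cites \cite{GoncharovKenyon} and only indicates that the argument goes through triple crossing diagrams and deformations of straight geodesics, which is the outline you adopt. Both halves of your sketch nevertheless have a genuine gap.

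\textbf{Uniqueness.} Your identification of Reidemeister~III moves with spider moves is false. At every crossing the four corners alternate between coherent type (one incoming and one outgoing strand) and alternating type. A face is of one of your two allowed kinds exactly when all its corners have the same type. Now perform a Reidemeister~III move on a coherent triangle (a trivalent vertex). The new triangle is again coherent, so each of the three faces that now share a side with it acquires two alternating-type corners while keeping coherent corners elsewhere. Likewise, the former side-neighbours acquire a coherent corner. The result is not the zig-zag diagram of any bipartite graph. A spider move corresponds instead to a 2-2 move of triple crossing diagrams; it involves four strands, and as an isotopy of curves it passes through such non-graph diagrams. So a straightening by triangle flips, even if it never created parallel bigons, yields no sequence of graph moves. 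What must be shown is that two minimal diagrams are connected \emph{within} the class of triple crossing diagrams by 2-2 moves. That is the real content of Thurston's disk theorem \cite{DThurston} and of its torus adaptation in \cite{GoncharovKenyon}. Moreover, your common endpoint, the straight-line arrangement, is in general not an admissible diagram at all: this is Forsg{\aa}rd's pentagon \cite{Forsgard}, recalled in the paper's remark.

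\textbf{Existence.} The ``local surgery inside each bad face'' is asserted, not constructed, and it cannot work face by face. Since corner types at a crossing are forced, changing the strands near one face changes the corner types of its neighbours, so admissibility is a global checkerboard-type condition. Repositioning straight lines does not help either:
\begin{itemize}
\item a triangular face cut out by three lines whose directions do not positively span the plane is never coherent, wherever the lines sit;
\item a quadrilateral face with two opposite sides on parallel, equally oriented lines is always bad.
\end{itemize}
You need a global procedure that provably ends in an admissible diagram without parallel bigons. Such a procedure must choose the cyclic order of parallel strands and where transversal strands cross, and in general use genuinely non-straight isotopies, which Forsg{\aa}rd's example shows are unavoidable. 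That is the actual work of the existence proof, and your proposal does not supply it.

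Finally, $\sum v_i=0$ does not balance black and white vertices. The homology classes of all zig-zag paths of \emph{any} bipartite torus graph sum to zero, since each edge is traversed once in each direction. Yet there are bipartite torus graphs with unequal numbers of black and white vertices.
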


In Figure \ref{fig:local_bip_moves}, we can see two types of local moves that preserve the dimer partition function. They played an important role in introducing an integrable system on minimal bipartite graphs by Goncharov and Kenyon~\cite{GoncharovKenyon}. In the proof of Theorem \ref{polygons_graphs GK}, they rely on the connection between bipartite graphs and triple crossing diagrams~\cite{DThurston}, or in different terms, plabic graphs~\cite{Pos06}.

\begin{figure}[h!]
    \centering
    \includegraphics[width=0.6\textwidth]{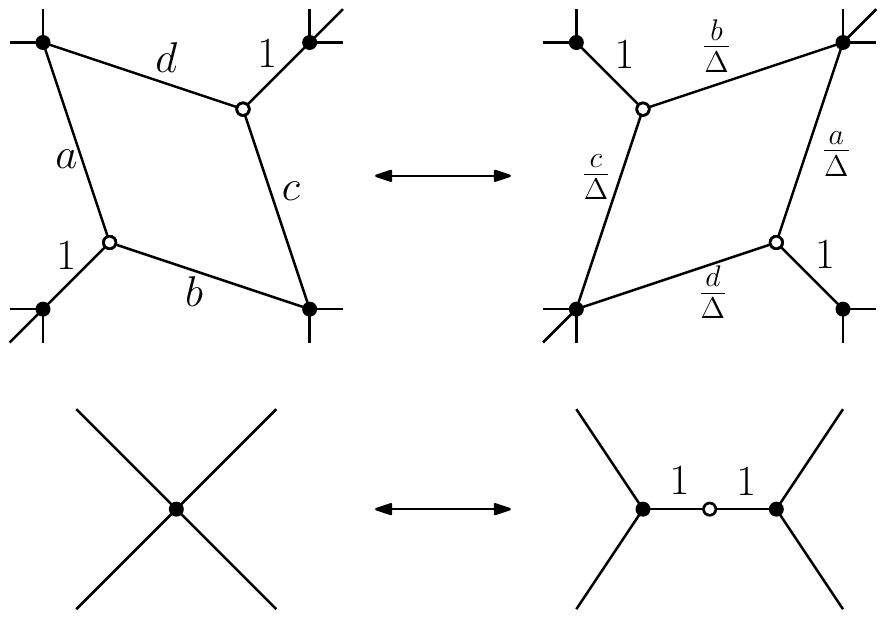}
    \caption{\label{fig:local_bip_moves} A spider move and an expansion/contraction of a degree $2$ vertex, $\Delta := ac + bd$.}
\end{figure}

\begin{rem}
    In the proof of Theorem \ref{polygons_graphs GK}, a construction on the torus is used in which one starts with straight geodesics that have homology classes equal to the primitive edge vectors describing the convex integral polygon. These geodesics are then deformed using suitable isotopies to obtain a bipartite torus graph. Note that there are certain examples of minimal bipartite graphs that can be obtained directly by translating the straight geodesics without any additional isotopies. For example, this can be done in the case of the hexagonal lattice. 
    
    However, Forsg{\aa}rd~\cite{Forsgard} found an example of a pentagon to which one cannot associate a minimal bipartite torus graph by only taking the straight geodesics with the corresponding homology classes. It remains an open question to determine which convex integral polygons can be obtained from isoradial bipartite graphs. 
\end{rem}

\begin{defi}
    The \emph{Newton polygon} of the Laurent polynomial $P(z,w)$, denoted by $N(P)$, is the convex hull of integer points $(i,j) \in \mathbb{Z}^2$ such that $z^i w^j$ has a non-zero monomial in $P(z,w)$. 
\end{defi}

Goncharov and Kenyon~\cite{GoncharovKenyon} found a correspondence between the bipartite zig-zag paths on minimal bipartite torus graphs and the Newton polygons that arise from the dimer characteristic polynomial of the given graph. 

\begin{lem}[\cite{GoncharovKenyon}] \label{bip_zig_newton}
    Let $\Gamma$ be a minimal bipartite torus graph, and let $N_{\mathrm{bip}}$ be the Newton polygon of the bipartite characteristic polynomial of $\Gamma$. There is a bijection between the multiset of homology classes of the bipartite zig-zag paths on $\Gamma$ and the primitive edge vectors on $N_{\mathrm{bip}}$.
\end{lem}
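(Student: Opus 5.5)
The plan is to build the Newton polygon $N_{\mathrm{bip}}$ directly from a cyclic family of \emph{extremal} perfect matchings, one for each "gap" between consecutive zig-zag directions. We then read off its edges as sums of zig-zag homology classes. Throughout we use the convention that an edge vector of $N_{\mathrm{bip}}$ of lattice length $k$ contributes $k$ primitive edge vectors to the multiset.

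\textbf{Step 1: $N_{\mathrm{bip}}$ is the convex hull of homology classes.} Fix a reference perfect matching $M_0$ of $\Gamma$. For every perfect matching $M$, the superposition $M\cup M_0$ is a union of double edges and oriented cycles, oriented from black to white along $M$. Its homology class $h(M)-h(M_0)\in H_1(\mathbb T;\mathbb Z)$ is exactly the exponent of the monomial that $M$ contributes to $P^{\mathrm{bip}}(z,w)$, up to a global shift. By the Kasteleyn sign rule on the torus (the bipartite analogue of Lemma~\ref{char_poly_properties}), all matchings with the same homology class contribute with the same sign. Hence there is no cancellation, and
\[
N_{\mathrm{bip}}=\mathrm{conv}\{h(M)-h(M_0): M\in M(\Gamma)\}.
\]

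\textbf{Step 2: extremal matchings from zig-zag paths.} Order the zig-zag paths of $\Gamma$ cyclically by the slope of their homology classes. Paths with equal classes are grouped together, so the groups are indexed by directions $d_1,\dots,d_r$ in cyclic order. For each gap between consecutive directions $d_i,d_{i+1}$, I would construct a matching $M_i$ with the following property: along every zig-zag path whose direction lies on one side of the gap, $M_i$ uses the "zig" edges, and along the remaining paths it uses the "zag" edges. Every edge lies on exactly two zig-zag paths, once as a zig and once as a zag. So the task is to check that this rule is consistent at each edge and gives exactly one edge per vertex.

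This is the main obstacle, and minimality is exactly what should make it work. Minimality means no self-intersections in the lift and no parallel bigons. It lets one argue, as in the triple-crossing-diagram picture of~\cite{DThurston}, that around each vertex the zig-zag strands appear in an order compatible with their slopes. Consequently exactly one edge at each vertex has its two strands separated by the chosen gap. I would first try to prove this local cyclic-order statement by looking at the lift $\tilde\Gamma$: there each zig-zag path is a quasi-geodesic with asymptotic direction equal to its homology class, and two such paths cross at most in a controlled way.

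\textbf{Step 3: edges of the polygon.} Compare consecutive extremal matchings $M_{i-1}$ and $M_i$. They differ exactly along the zig-zag paths of direction $d_i$, so $M_{i-1}\cup M_i$ is the disjoint union of those paths. Therefore
\[
h(M_i)-h(M_{i-1})=\sum_{z:\,[z]=d_i}[z] .
\]
Each class $[z]$ is primitive, because a zig-zag path has no self-intersections in $\tilde\Gamma$. Summing over all $i$ returns zero, which matches the fact that every edge is traversed once in each direction by zig-zag paths. Thus the points $h(M_i)$ form a closed convex lattice polygon $Q$ whose edge vectors, cut into primitive pieces, are exactly the zig-zag homology classes with multiplicity.

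\textbf{Step 4: $Q$ is the whole Newton polygon.} It remains to show $N_{\mathrm{bip}}=Q$. For this, take a linear functional $\phi$ that is maximized on $Q$ at the vertex $h(M_i)$. For any matching $M$, decompose $M\cup M_i$ into cycles, and use the height-function/flux argument: each cycle's class pairs with $\phi$ non-positively. The reason is that such a cycle must cross the zig-zag paths from the side opposite to the gap in a sign-constrained way, which again uses the slope ordering from Step 2. This gives $\phi(h(M))\le\phi(h(M_i))$, so no matching leaves $Q$, and the claimed bijection follows.
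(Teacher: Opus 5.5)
The paper does not prove this lemma; it is quoted from \cite{GoncharovKenyon}. Your outline therefore has to stand on its own. Step 1 and the bookkeeping of Step 3 are fine, but Step 2 fails as written. You require that along \emph{every} zig-zag path $M_i$ use either all zig edges or all zag edges. An edge that is a zig of $Z$ and a zag of $Z'$ then forces $Z$ and $Z'$ to receive opposite labels. In the $1\times 1$ hexagonal lattice the three zig-zag paths pairwise share an edge, so no such labelling exists, and hence no such matching. Concretely, the corner matching $\{a\}$ contains no edge at all of the zig-zag path through $b$ and $c$. (A single gap on the circle of directions also has no ``two sides''.)

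The rule that works is edgewise. Put $e$ in $M_i$ iff $g_i$ lies in the open arc between the classes of the two zig-zag paths through $e$, with the arc oriented according to which of them traverses $e$ from black to white. This gives one edge per vertex because of the ``properly ordered'' property: around every vertex, the classes of the zig-zag paths turning there occur in the same cyclic order as the corners, so the arcs attached to the edges at that vertex tile the circle. With this rule the paths of classes $d_i$ and $d_{i+1}$ are $M_i$-alternating, while in general the others are not. Moving the gap across $d_i$ toggles exactly the edges of the paths of class $d_i$, so your Step 3 then goes through. Your later sentence about ``two strands separated by the chosen gap'' points toward this rule, but it contradicts the property you actually imposed. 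More importantly, the local cyclic-order lemma is where minimality is used and is the substantive part of the theorem, and you only announce a strategy for it.

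Step 4 is likewise only asserted. The per-cycle crossing analysis with slope ordering that you suggest is not needed; here is an argument that works:
\begin{enumerate}[(i)]
\item The strand of a zig-zag path $Z$ crosses each edge of $Z$ once, always from the same side relative to the black-to-white orientation.
\item Hence for any two perfect matchings, $\det\bigl([Z],h(M)-h(M')\bigr)=\pm\bigl(|M\cap E(Z)|-|M'\cap E(Z)|\bigr)$, with a sign fixed by conventions.
\item Since $Z$ is a simple cycle, $|M\cap E(Z)|$ is at most half the length of $Z$, with equality iff $M$ alternates along $Z$.
\item Take $Z$ of class $d_i$, along which $M_i$ alternates. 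Then every $h(M)$, in particular every vertex of $Q$, lies on one side of the line through $h(M_i)$ parallel to $d_i$. That line carries the side of $Q$ in direction $d_i$.
\item Intersecting these half-planes over $i$ gives $N_{\mathrm{bip}}\subseteq Q$.
\end{enumerate}
This is the same counting the paper uses in Lemmas~\ref{fundamental_domain matching} and~\ref{N_delta < N}.

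Finally, primitivity of $[Z]$ does not follow from a lift of $Z$ being simple: a curve of class $(2,0)$ can have an embedded lift. What you need is that distinct lifts of $Z$ are disjoint. This follows from the absence of parallel bigons, since two lifts meeting at $p$ also meet at $p+[Z]$ and both run from $p$ to $p+[Z]$. Alternatively, once Step 2 is repaired, it follows from $Z$ being a component of $M_{i-1}\triangle M_i$. The same fact is what justifies ``simple cycle'' in step (iii) above.
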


\noindent See Figure \ref{fig:hex_polygon} for an example.

\begin{figure}[h!]
    \centering
    \includegraphics[width=0.7\textwidth]{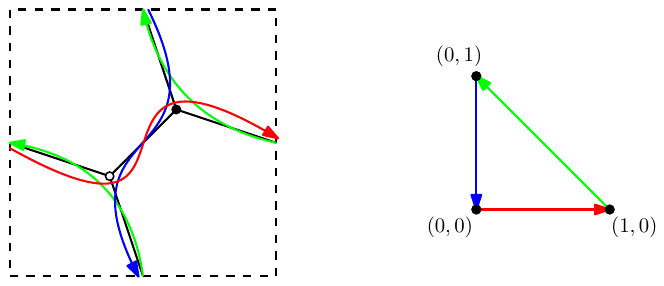}
    \caption{\label{fig:hex_polygon} Hexagonal lattice, its zig-zag paths, and the corresponding Newton polygon.}
\end{figure}

\subsubsection*{Newton polygons for non-bipartite graphs}

We want to generalize the result obtained in Lemma \ref{bip_zig_newton} to certain families of non-bipartite graphs. First, we need to adapt the definition of bipartite zig-zag paths, as it previously relied on the assumption that the graph is bipartite. 

We define a \emph{zig-zag path} as an oriented path in the graph that alternates between turning maximally left and turning maximally right. Note that for a bipartite graph, keeping or reversing the orientation of each bipartite zig-zag path yields two zig-zag paths, and every zig-zag path arises in this manner.

In the following sections, we will be particularly interested in isoradial graphs, which are defined in general in the following way.

\begin{defi} \label{gen_isoradial_graphs}
    A torus graph $\Gamma$ is \emph{isoradial} if, in its lift $\tilde\Gamma$ to the universal cover, none of its zig-zag paths is self-intersecting, and two distinct zig-zag paths that are not the reverse of each other intersect at most once.
\end{defi}

If we apply the full Kasteleyn matrix to a bipartite graph such that the first half of the vertices are white and the second half are black, then we obtain an antidiagonal block matrix. Consequently, its characteristic polynomial factors as 
\[P(z, w) = P_1(z,w)P_2(z,w),\]
where each of them is obtained as the determinant of a bipartite Kasteleyn matrix. One can observe that the Newton polygons of $P_1$ and $P_2$ are related in a way that $N_{\mathrm{bip}}(P_2) = -N_{\mathrm{bip}}(P_1)$. Thus, we obtain the Newton polygon \[N(P) = N_{\mathrm{bip}}(P_1) - N_{\mathrm{bip}}(P_1),\] where the difference is defined as the Minkowski difference of polygons. Using this correspondence, we can derive the following result directly from Lemma \ref{bip_zig_newton}.

\begin{cor} \label{NPs and homology classes}
    Let $\Gamma$ be a minimal bipartite torus graph, and let $P(z,w)$ be its characteristic polynomial obtained from the full Kasteleyn matrix. There is a bijection between the multiset of homology classes of the zig-zag paths on $\Gamma$ and the primitive edge vectors on the Newton polygon $N(P)$. 
\end{cor}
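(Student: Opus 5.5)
The plan is to reduce Corollary~\ref{NPs and homology classes} to Lemma~\ref{bip_zig_newton} through the factorization $P = P_1P_2$ described just above. Order the vertices so that the full modified Kasteleyn matrix is antidiagonal in blocks:
\[K(z,w)=\begin{pmatrix}0 & K^{\mathrm{bip}}(z,w)\\ -K^{\mathrm{bip}}(z^{-1},w^{-1})^{T} & 0\end{pmatrix}.\]
The lower-left block is obtained from the upper-right one by reversing each edge. Reversing an edge negates its Kasteleyn sign, and also negates both signed intersection numbers $\alpha(e),\beta(e)$, which replaces $z,w$ by $z^{-1},w^{-1}$. Hence, up to a global sign,
\[P(z,w)=P^{\mathrm{bip}}(z,w)\,P^{\mathrm{bip}}(z^{-1},w^{-1}).\]
Taking Newton polygons, $N(P_2)=-N(P_1)$. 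Since the Newton polygon of a product of Laurent polynomials is the Minkowski sum of the Newton polygons, we get
\[N(P)=N_{\mathrm{bip}}+(-N_{\mathrm{bip}}),\]
which is the set $N_{\mathrm{bip}}-N_{\mathrm{bip}}$ of the text.

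The second step is the edge structure of this Minkowski sum. For convex polygons $A$ and $B$, the boundary of $A+B$ is obtained by merging the cyclically ordered edge vectors of $A$ and of $B$ by slope. Consequently, the multiset of primitive edge vectors of $A+B$ is the union of the two multisets, where each edge of direction $u$ and lattice length $k$ contributes $k$ copies of the primitive vector $u$. For $B=-A$, every edge vector $u$ of $A$ reappears as $-u$. So the multiset of primitive edge vectors of $N(P)$ is
\[\{u_i\}\cup\{-u_i\},\]
where $\{u_i\}$ is the multiset for $N_{\mathrm{bip}}$. Parallel edges of $A$ and $-A$ merge into a single longer edge, but the multiset count is unaffected.

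The third step matches this with zig-zag paths. By Lemma~\ref{bip_zig_newton}, $\{u_i\}$ is in bijection with the homology classes of the bipartite zig-zag paths. As remarked in the text, every zig-zag path of the bipartite graph is a bipartite zig-zag path taken with one of its two orientations. Reversing orientation negates the homology class, so the zig-zag paths give exactly the multiset $\{u_i\}\cup\{-u_i\}$. This yields the bijection.

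The main obstacle is bookkeeping rather than depth. Two points need care. First, the relation between the lower-left block and $K^{\mathrm{bip}}$ must hold as stated for the chosen Kasteleyn orientation and the convention for $z,w$ (black vertex to the left or below). Second, no monomial cancellation can shrink $N(P)$. The second point is automatic: a vertex of the Minkowski sum is attained by a unique pair of vertices, so its coefficient is a product of nonzero coefficients.
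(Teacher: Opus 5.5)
Your proposal is correct and follows the paper's own route. You factor the block-antidiagonal full Kasteleyn matrix as $P(z,w)=P^{\mathrm{bip}}(z,w)\,P^{\mathrm{bip}}(z^{-1},w^{-1})$, identify $N(P)$ with $N_{\mathrm{bip}}+(-N_{\mathrm{bip}})$, and apply Lemma~\ref{bip_zig_newton} together with the fact that zig-zag paths are bipartite zig-zag paths taken in either orientation. You also supply details the paper leaves implicit: the identity $K_{v,u}(z,w)=-K_{u,v}(z^{-1},w^{-1})$, the edge-merging description of the boundary of a Minkowski sum, and the absence of cancellation at vertices of the sum.
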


Moreover, by Lemma \ref{char_poly_properties} (d), the Newton polygon obtained from the characteristic polynomial of a graph that has a perfect matching is always centrally symmetric. This can also be seen in minimal bipartite graphs by noticing that for each zig-zag path with homology class $(a,b)$, there is a zig-zag path going through the same edges with homology $(-a,-b)$.

The zig-zag paths and the corresponding centrally symmetric Newton polygons were already studied in the context of the Ising model by George \cite{George25}.

\section{Blowing up the $3$-valent vertices} \label{trunc_graphs}

We say that a graph is \emph{$2,3$-valent} if every vertex has degree $2$ or $3$. We want to show that for $2,3$-valent torus graphs, the operation of replacing its degree $3$ vertices with triangles does not change the homology classes of its zig-zag paths. 

Before doing so, we need to set up the basis for the homology group $H_1(\mathbb{T},\mathbb{Z})$. Let $x$ and $y$ be two cycles in $\mathbb{T}$ such that their homology classes $[x]$ and $[y]$ generate $H_1(\mathbb{T},\mathbb{Z})$. From now on, using this basis, we identify the first homology group of the torus with $\mathbb{Z}^2$. Let $\mathcal{Z}(\Gamma)$ be the set of all zig-zag paths in $\Gamma$. Let $Z \in \mathcal{Z}(\Gamma)$, and denote by $[Z] \in H_1(\mathbb{T},\mathbb{Z}) \cong \mathbb{Z}^2$ the homology class of a zig-zag path $Z$. We denote by $H(\mathcal{Z}(\Gamma))$ the multiset that contains all homology classes of the zig-zag paths in $\Gamma$. 

\begin{defi} [Construction of $\Gamma_\Delta$] \label{const of gamma delta}
    Let $\Gamma$ be a $2,3$-valent graph, and let $\overrightarrow E(\Gamma)$ denote its set of oriented edges. We partition the vertices of $\Gamma$ into $V_2$ and $V_3$, respectively, the vertices of degree $2$ and $3$. Define $V'_3$ to be the collection of all pairs $(v,v')$ such that $\overrightarrow{vv'}\in \overrightarrow E(\Gamma)$ and $v\in V_3$. The vertex set of $\Gamma_\Delta$ is the disjoint union of $V_2$ and $V'_3$. 
    
    We define a projection map $\pi:V(\Gamma_\Delta) \rightarrow V(\Gamma)$, where $\pi(v)=v$ if $v\in V_2$ and $\pi(v,v')=v$ if $(v,v')\in V'_3$. For any two vertices $w,w'$ in $V(\Gamma_\Delta)$, we add an edge between $w$ and $w'$ in $\Gamma_\Delta$ if the two vertices $\pi(w)$ and $\pi(w')$ are connected by an edge in $\Gamma$. We also add an edge between any two elements of $V(\Gamma_\Delta)$ of the form $(v,v')$ and $(v,v'')$. 
\end{defi}

 For convenience, we call the edges contained in the triangles of $\Gamma_{\Delta}$ \emph{short edges}, while the remaining edges of $\Gamma_{\Delta}$ are called \emph{long edges}, which we denote by $\overrightarrow{E_{\ell}}(\Gamma_{\Delta})$. Note that there is a bijection $t: \overrightarrow{E_{\ell}}(\Gamma_{\Delta}) \rightarrow \overrightarrow{E}(\Gamma)$, between the oriented long edges of $\Gamma_{\Delta}$ and the oriented edges of $\Gamma$. We have $t(\overrightarrow{u'v'}) = \overrightarrow{uv}$, if $u' \in \Delta_{u}$ and $v' \in \Delta_{v}$, where $\Delta_{u}$ denotes the three vertices in $\Gamma_{\Delta}$ obtained from replacing $u$ with a triangle. Note that if $\deg(u) = 2$ (resp. $\deg(v) = 2$), then we take $u' = u$ (resp. $v' = v$).

Then we can define a map $\phi: \mathcal{Z}(\Gamma_{\Delta}) \rightarrow \mathcal{Z}(\Gamma)$ such that 
\[\phi(Z) = Z' = \{t(e) : e \in Z \cap \overrightarrow{E_{\ell}}(\Gamma_{\Delta})\}.\]

See Figure \ref{fig:deltazigzag} for an illustration of this map.

\begin{figure}[h!]
    \centering
    \includegraphics[width=0.7\textwidth]{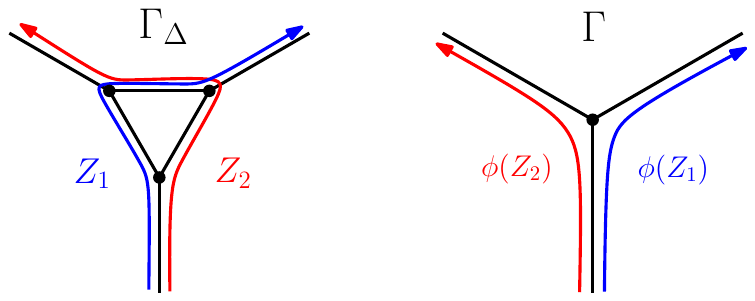}
    \caption{\label{fig:deltazigzag} The correspondence between the zig zag paths of $\Gamma_{\Delta}$ and $\Gamma$.}
\end{figure}

\begin{prop} \label{homologies_gamma_delta}
    Let $\Gamma$ be a $2, 3$-valent graph, and let $\Gamma_{\Delta}$ be the graph obtained by replacing each degree $3$ vertex with a triangle. The map $\phi$ is a bijection, and for all $Z \in \mathcal{Z}(\Gamma_{\Delta})$, we have $[Z] = [\phi(Z)]$. It follows that
    \[H(\mathcal{Z}(\Gamma)) = H(\mathcal{Z}(\Gamma_{\Delta})).\]
\end{prop}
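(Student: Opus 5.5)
We will prove this by a local analysis of how a zig-zag path of $\Gamma_\Delta$ traverses a triangle $\Delta_u$. First fix the cyclic orders. At a degree $3$ vertex $u$ with neighbours $v_1,v_2,v_3$ in counterclockwise order, the triangle $\Delta_u$ has vertices $(u,v_1),(u,v_2),(u,v_3)$ in the same cyclic order. Each $(u,v_i)$ has degree $3$ in $\Gamma_\Delta$: one long edge towards $\Delta_{v_i}$ (or towards $v_i$ itself when $\deg v_i = 2$) and two short edges. Degree $2$ vertices have the same two neighbours (up to $\pi$) as before.

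\textbf{Step 1 (entering a triangle).} Let $Z$ arrive at $(u,v_i)$ along its long edge. The edge after the next turn is a short edge, to $(u,v_{i+1})$ or $(u,v_{i-1})$ according to whether the turn is maximally left or right. At $(u,v_{i\pm1})$ the turn direction alternates. The maximal turn in the opposite direction, coming in along the short edge, exits along the long edge of $(u,v_{i\pm1})$, since that edge lies between the two short edges in the rotation. So every zig-zag path crosses each triangle using exactly one short edge. Long edge $i$ followed by long edge $i+1$ matches a left turn at $u$ in $\Gamma$, and long edge $i$ followed by long edge $i-1$ matches a right turn.

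\textbf{Step 2 (parity bookkeeping).} Passing through a triangle uses two turns (left then right, or right then left). The net effect on the alternation counter is therefore the single turn made at $u$ in $\Gamma$, but shifted by one. I would check carefully that $\phi(Z)$ still alternates. In $\Gamma_\Delta$ the turn made on entering $(u,v_i)$ is the turn that determines the direction at $u$. The next turn, made at the vertex reached after the long edge, is the second turn in the triangle, and it has the opposite type. So after leaving $\Delta_u$ along a long edge, the next turn is again opposite to the turn taken at $u$, which is exactly the zig-zag rule in $\Gamma$. Degree $2$ vertices are the same in both graphs, where a left or right turn is forced along the unique other edge, so they behave identically. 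Hence $\phi(Z)$ is a zig-zag path of $\Gamma$. This parity check is the step I expect to need the most care, especially the orientation conventions (counterclockwise versus clockwise labelling) and the case of consecutive degree $2$ vertices.

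\textbf{Step 3 (bijection and homology).} For the inverse map, take a zig-zag path $Z'$ of $\Gamma$. Lift each oriented edge $\overrightarrow{uv}$ to the unique long edge $t^{-1}(\overrightarrow{uv})$. Join consecutive long edges inside $\Delta_u$ by the unique short edge between their endpoints in the triangle, which is determined by Step 1. By Step 2 the resulting path is a zig-zag path of $\Gamma_\Delta$, and the two constructions are mutually inverse, since a zig-zag path is determined by its sequence of long edges. For homology, contract each triangle $\Delta_u$ to the point $u$. This map $\Gamma_\Delta \to \Gamma$ is homotopic to the identity on $\mathbb{T}$, because the triangles are contractible discs in the embedding. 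It sends $Z$ to a loop homotopic to $\phi(Z)$: long edges go to the corresponding edges, and short edges collapse to points. Hence $[Z] = [\phi(Z)]$. The multiset equality $H(\mathcal{Z}(\Gamma)) = H(\mathcal{Z}(\Gamma_\Delta))$ then follows immediately from the bijection.
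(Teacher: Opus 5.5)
There is a genuine gap: your Step 1 is incorrect, and Steps 2 and 3 inherit the error.

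Suppose $Z$ arrives at $(u,v_i)$ along its long edge and turns onto the short edge towards $(u,v_{i\pm1})$. At $(u,v_{i\pm1})$, the turn of the \emph{opposite} type does not take the long edge. It takes the other short edge, to $(u,v_{i\mp1})$. Reaching the long edge at $(u,v_{i\pm1})$ would need a second turn of the same type, which alternation forbids. With only three edges at a vertex, each edge lies ``between'' the other two, so your rotation argument does not decide which turn reaches the long edge.

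A convention-free way to see the correct picture is the following. At each vertex, the incoming and outgoing edges of a zig-zag path are consecutive edges of a face, and these faces alternate between the two sides of the path. So for every edge $e_k$ of $Z$, the pairs $(e_{k-1},e_k)$ and $(e_k,e_{k+1})$ bound the faces on the two different sides of $e_k$. For a short edge, these faces are $\Delta_u$ and an outer face. Hence one neighbour of a short edge along $Z$ is another short edge of $\Delta_u$, and the other neighbour is a long edge.

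Every transit of a triangle is therefore long, short, short, long, and it visits all three vertices of $\Delta_u$. For example, the path goes from the long edge at $(u,v_i)$ to the long edge at $(u,v_{i+1})$ via $(u,v_{i-1})$. Its turn pattern is $\bar\ell,\ell,\bar\ell$, where $\ell$ is the turn of $\phi(Z)$ at $u$. This is the ``remaining third vertex'' in the paper's proof.

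\textbf{Consequences for Steps 2 and 3.} Your parity check in Step 2 is carried out for a two-turn transit that never occurs, and it could not close anyway. By your own correspondence, the turn of $\phi(Z)$ at a degree-$3$ vertex is determined by the turn on entering its triangle. With two turns per triangle, the entering turns at two degree-$3$ vertices joined by an edge of the path would coincide. Then $\phi(Z)$ would turn the same way at both vertices and would not be a zig-zag path.

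Similarly, the inverse map in Step 3 joins consecutive long edges by the single short edge between their endpoints. This creates two consecutive turns of the same type, so its output is not a zig-zag path of $\Gamma_\Delta$, and surjectivity is not established.

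\textbf{Repair.} Use the three-turn transit. With the pattern $\bar\ell,\ell,\bar\ell$, alternation in $\Gamma_\Delta$ is equivalent to alternation in $\Gamma$. Indeed, suppose $j$ degree-$2$ vertices separate two consecutive degree-$3$ vertices of the path. In both graphs, the turns at these two degree-$3$ vertices must differ exactly when $j$ is even, since the turn label at a degree-$2$ vertex has no geometric effect. The inverse map must then insert the two-edge path through the third vertex of each triangle.

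Your homology argument, contracting each triangle to a point, is fine. It is more careful than the paper's one-line justification.
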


\begin{proof}
    First of all, notice that the map $\phi$ does not change the homology of a zig-zag path $Z$. As defined, it only concatenates certain edges of $Z$. Therefore, we have $[Z] = [\phi(Z)]$, for all  $Z \in \mathcal{Z}(\Gamma_{\Delta})$. 

    We further notice that the map $\phi$ is also surjective, as any zig-zag path in $\mathcal{Z}(\Gamma)$ can be extended to a zig-zag path in $\mathcal{Z}(\Gamma_{\Delta})$ by visiting the remaining third vertex in $\Delta_u$ for every $3$-valent vertex $u$. This is consistent with the condition that zig-zag paths alternate maximally left and maximally right.
    
    It remains to show that the map is injective. Assume the contrary, there exist $Z_1$ and $Z_2$ such that $\phi(Z_1) = \phi(Z_2)$. By definition, the long edges of $Z_1$ and $Z_2$ must be the same. So, these two zig-zag paths must differ in their short edges. However, this is not possible, as by imposing which long edges are included in the zig-zag path and respecting the alternating property, for each $\Delta_u$ of the $3$-valent vertex $u$ we can choose the short edges uniquely. Thus, we proved that the map $\phi$ is bijective. As a direct consequence, it also follows that $H(\mathcal{Z}(\Gamma)) = H(\mathcal{Z}(\Gamma_{\Delta}))$.
\end{proof}

\subsection*{Newton polygons of $\Gamma$ and $\Gamma_{\Delta_S}$}

Let $S$ be a subset of $V_3$. We denote by $\Gamma_{\Delta_S}$ the torus graph obtained by replacing the subset $S$ of $3$-valent vertices of $\Gamma$ with triangles. Moreover, let $N(\Gamma)$ be the Newton polygon of the characteristic polynomial for a given torus graph $\Gamma$. We want to show by double inclusion that the Newton polygons of $N(\Gamma)$ and $N(\Gamma_{\Delta_S})$ are the same if $\Gamma$ is an isoradial bipartite graph. 

The following remark will be useful throughout the rest of the paper.

\begin{rem} [$SL_2(\mathbb{Z})$ transformation on Newton polygons] \label{sl2_NP}
    We can apply an $SL_2(\mathbb{Z})$ transformation on a Newton polygon to obtain another polygon containing a vertical side. This means that we are changing the basis for the homology group $H_1(\mathbb{T}, \mathbb{Z})$, or equivalently, choose a different fundamental domain. This is a standard procedure when working with Newton polygons associated with dimer models. See, for example, Section 3.1 of~\cite{GI24}.
\end{rem}

We now know that the fundamental domain can be chosen so that the Newton polygon contains a vertical side. We now explain how to redraw the graph within such a fundamental domain without changing its characteristic polynomial, and consequently without changing its Newton polygon.

\begin{lem} \label{vertical zig-zag path}
    Let $\Gamma$ be a torus graph that contains a zig-zag path $Z$ with a homology $(0,1)$. Then we can choose a fundamental domain, without changing the homology classes of the zig-zag paths of $\Gamma$, such that the vertical boundary of the fundamental domain crosses every edge of $Z$ once and no other edges of $\Gamma$. 
\end{lem}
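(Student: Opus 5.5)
We will construct the new fundamental domain directly from $Z$. Since $Z$ is a closed oriented loop on $\mathbb{T}$ with homology class $(0,1)$, a primitive class, we will first check that $Z$ can be taken to be simple as a curve on the torus. If $Z$ happens to revisit a vertex, we will perturb it slightly inside the faces it borders, so that it becomes a simple closed curve $\tilde Z$ isotopic to the drawing of $Z$. A closed curve in a primitive class that is not simple would give a self-intersection in the lift; for the graphs considered in the paper, such as isoradial graphs, this does not happen, and in general we will only need $\tilde Z$ to be simple after perturbation.

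Next we construct a \emph{dual} curve $c$ to $Z$. Instead of $Z$ itself, we use a transverse curve $c_0$ that runs alongside $Z$. A zig-zag path turns alternately maximally left and maximally right, so consecutive edges of $Z$ share a face alternately on the left and on the right. We will draw $c_0$ as a curve that crosses each edge of $Z$ exactly once at its midpoint, passing through the faces between consecutive edges of $Z$. Concretely, $c_0$ goes from the midpoint of $e_i$ to the midpoint of $e_{i+1}$ inside the face where $Z$ turns. Because $e_i$ and $e_{i+1}$ are consecutive around that face, the connecting arc stays in a single face and crosses no other edges of $\Gamma$. This is exactly the standard ``zig-zag path as a train track'' picture. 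The resulting closed curve $c_0$ is homotopic to $Z$, so $[c_0]=(0,1)$. It meets only the edges of $Z$, each once, since it alternates sides as $Z$ alternates turns. The main obstacle is ensuring that $c_0$ is simple, in particular when an edge appears twice in $Z$ (once in each direction) or a face is used twice. Here we will use that $Z$ visits each oriented edge at most once and argue that the arcs inside a common face can be drawn disjointly, by nesting them as chords of a disk with non-interleaving endpoints.

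Finally, we take $c_0$ as the vertical boundary of the new fundamental domain. Since $[c_0]=(0,1)$ is primitive and $c_0$ is simple, there is a simple closed curve $d$ with $c_0\cdot d=1$, and cutting $\mathbb{T}$ along $c_0$ and $d$ gives a fundamental domain. We can choose $d$ in the class $(1,0)$ after composing with a power of the Dehn twist along $c_0$; this twist fixes the class $(0,1)$. The resulting change of basis is then the identity on homology, up to an isotopy of the torus. Hence the homology classes of all zig-zag paths, computed as signed intersections with the boundary curves, are unchanged. By construction, the vertical boundary $c_0$ crosses every edge of $Z$ exactly once and no other edges of $\Gamma$.
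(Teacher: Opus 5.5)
Your construction is the paper's: there too the vertical boundary is the loop through the midpoints of consecutive edges of $Z$, running inside the faces at the corners where $Z$ turns, and it is then straightened. Your Dehn-twist choice of the second boundary curve is a more careful version of what the paper settles in one sentence. The gap is in the step you yourself single out, the simplicity of $c_0$, and your nesting argument fails exactly in the case you mention.

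Let $e=uv$ be traversed twice by $Z$, and let $f_L,f_R$ be the faces on its two sides. Each passage through $e$ arrives from one corner and leaves through the diagonally opposite one. So it uses either the pair $(u,f_L),(v,f_R)$ (call this the first passage) or the pair $(v,f_L),(u,f_R)$. The two passages must use different pairs. The same pair in the same direction is the same step of $Z$. The same pair in opposite directions would put a step and its reverse in $Z$; then $Z$ would coincide with its own reverse, forcing $[Z]=-[Z]$, i.e.\ $[Z]=0$.

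Hence $c_0$ crosses $e$ twice, contradicting ``each once''. In general this crossing cannot be drawn away. Suppose neither $f_L$ nor $f_R$ is a digon. In $f_R$ the first arc runs to the corner at $v$ and the second to the corner at $u$. So if the first passage crosses $e$ nearer to $u$, their endpoints interleave on $\partial f_R$; otherwise the two arcs in $f_L$ interleave, by the symmetric argument. The fact you invoke, that $Z$ visits each \emph{oriented} edge at most once, does not address this. It is also not automatic for non-bipartite graphs, where the turn direction at a vertex is not determined by the vertex.

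What is needed is that $Z$ uses each unoriented edge at most once. This is the situation the statement has in mind, and it holds wherever the lemma is applied. If $\Gamma$ is isoradial, a repeated edge would give either a self-intersecting lift $\tilde Z$, or two distinct, equally oriented lifts $\tilde Z$ and $\tilde Z+(p,q)$ sharing an edge. In the latter case they share infinitely many edges by $(0,1)$-periodicity, which isoradiality forbids.

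Under this hypothesis, each side of a face carries at most one crossing point of $c_0$, and every arc of $c_0$ inside that face joins the crossing points on two adjacent sides. Two such chords never interleave, so your nesting argument does go through: $c_0$ is simple and crosses each edge of $Z$ exactly once.

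Repeated \emph{vertices} are harmless for the same reason, so your first paragraph about perturbing $Z$ is unnecessary. Its claim that a non-simple curve in a primitive class must have a self-intersecting lift is also inaccurate: the double point can come from the lift meeting one of its translates.

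The paper's proof simply asserts that the loop can be drawn simple, so it relies on the same tacit hypothesis. Once you state it, the rest of your argument is fine: $c_0\simeq Z$, $c_0$ meets only edges of $Z$, and $d$ can be chosen as you describe.
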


\begin{proof}
    We embed $\Gamma$ in the torus without edge crossings. Then we choose a fundamental domain whose vertical boundary is given by a simple loop $\gamma_Z$ visiting the midpoints of the consecutive edges of $Z$.

    By the zig-zag property (maximal left/right turns) and planarity of the embedding, the loop can be drawn such that no edge not in $Z$ can cross it. For convenience, we deform $\Gamma$ such that its fundamental domain has a rectangular shape, that is, we deform $\gamma_Z$ into a straight curve. See Figure \ref{fig:def fund domain} for an example. This remains the same graph, and the homology classes of all its zig-zag paths are unchanged.
\end{proof}

\begin{figure}[h!]
    \centering
    \includegraphics[width=0.8\textwidth]{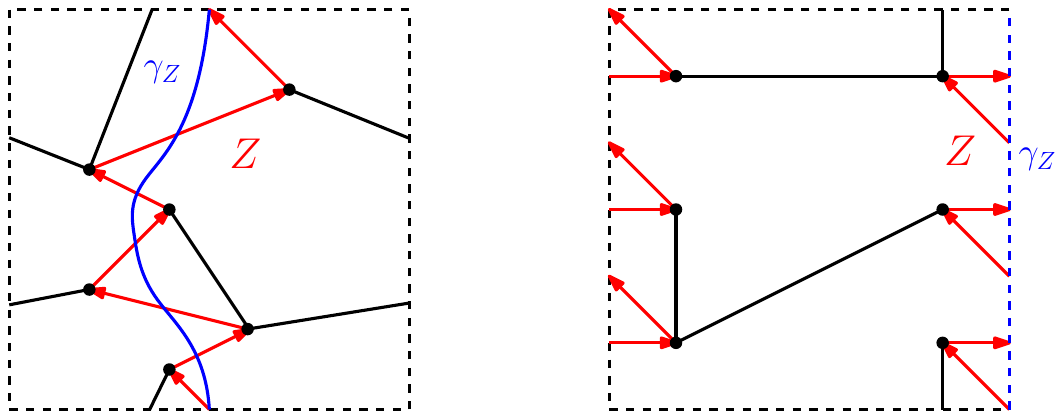}
    \caption{\label{fig:def fund domain} Left: a torus graph $\Gamma$, a zig-zag path $Z$ with homology class $(0,1)$, and a loop $\gamma_Z$. Right: the same graph $\Gamma$ obtained by deforming $\gamma_Z$ into a straight curve, and moving appropriately the vertices. This operation does not change the homology classes of any zig-zag path in $\Gamma$.}
\end{figure}

\begin{lem} \label{fundamental_domain matching}
    Let $\Gamma$ be an isoradial graph.  
    Suppose the homology classes of its zig-zag paths are given by vectors $[Z_i] = (\alpha_i, \beta_i)$ for $1 \leq i \leq n$, and assume $[Z_1] = (0,1)$. Define \[k = \sum_{i=1}^n \max(\alpha_i,0).\] 
    Then the zig-zag path $Z_1$ consists of exactly $k$ edges.
\end{lem}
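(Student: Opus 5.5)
We count the edges of $Z_1$ through the crossings of $Z_1$ with the other zig-zag paths, and then use intersection numbers on the torus. We rely on the standard local structure of zig-zag paths: every edge $e$ of $\Gamma$ is traversed by exactly two zig-zag paths that are not reverses of one another. These two paths cross transversally at $e$, one turning left and the other turning right there. So each edge of $Z_1$ is a crossing point of $Z_1$ with exactly one other path $Z_i$ (or its reverse). Conversely, every crossing of $Z_1$ with another zig-zag path happens at an edge of $Z_1$.

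The first step is to pass to the universal cover $\tilde\Gamma$ and lift $Z_1$ to a bi-infinite path $\tilde Z_1$ of period $(0,1)$. By Definition \ref{gen_isoradial_graphs}, $\tilde Z_1$ is simple, and any other lifted zig-zag path (not the reverse of $\tilde Z_1$) meets it at most once. The edges of $Z_1$ on the torus correspond to the edges of $\tilde Z_1$ in one period. Each such edge is a crossing of $\tilde Z_1$ with a unique lift of some $Z_i$.

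The second step counts, for each $i \neq 1$, how many lifts of $Z_i$ cross one period of $\tilde Z_1$. Since lifts of $Z_i$ cross $\tilde Z_1$ at most once, the crossing count per period equals the number of $\mathbb{Z}(0,1)$-orbits of lifts of $Z_i$ that meet $\tilde Z_1$. A lift of $Z_i$ with $\alpha_i\neq 0$ has horizontal displacement, so it must cross the vertical line $\tilde Z_1$, and it does so exactly once. Lifts with $\alpha_i=0$ are parallel to $\tilde Z_1$, and by isoradiality they cannot cross it (they would cross infinitely often or not at all). Hence the number of crossings on the torus is the geometric intersection number $|\det([Z_1],[Z_i])| = |\alpha_i|$. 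This is where I expect the main difficulty: one must rule out extra cancelling crossings, and isoradiality (at most one crossing in the cover) is exactly what makes the algebraic and geometric intersection numbers agree.

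The third step handles orientations. Each unoriented zig-zag path appears in the list twice, once as $(\alpha_i,\beta_i)$ and once as $(-\alpha_i,-\beta_i)$. So the multiset of paths crossing $Z_1$ contributes $|\alpha_i|$ from each unoriented pair, counted once. Summing over unoriented paths other than the pair $\pm Z_1$ gives
\[|Z_1| = \sum_{\text{pairs}} |\alpha_i| = \sum_{i=1}^n \max(\alpha_i,0) = k.\]
This works because in each pair exactly one of $\alpha_i,-\alpha_i$ is positive, and $Z_1$ together with its reverse contributes $\max(0,0)=0$. Finally, we check that each edge of $Z_1$ is counted exactly once, so there is no double counting coming from the two orientations. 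This holds because the second path through an edge is determined up to reversal.
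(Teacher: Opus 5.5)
Your proposal is correct and follows essentially the same route as the paper. You partition the edges of $Z_1$ according to the other (unoriented) zig-zag path through them, use isoradiality in the universal cover to show that a path of homology $(\alpha_i,\beta_i)$ shares exactly $|\alpha_i|$ edges with $Z_1$ (and none if $\alpha_i=0$), and sum over orientation pairs to get $\sum_i \max(\alpha_i,0)$. The only difference is presentational: the paper first re-embeds $\Gamma$ via Lemma~\ref{vertical zig-zag path}, so the vertical boundary runs through the midpoints of the edges of $Z_1$, and reads the lower bound off the signed crossing number, whereas you get it from the separation of the plane by $\tilde Z_1$ plus an orbit count of lifts (your parenthetical for $\alpha_i=0$ should also cover a homology-$(0,0)$ path, whose closed lift would meet $\tilde Z_1$ an even number of times, which isoradiality also excludes).
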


\begin{proof}
    We start by applying Lemma \ref{vertical zig-zag path} and choosing the fundamental domain so that every edge of the zig-zag path $Z_1$ crosses the vertical boundary, while no other edges of $\Gamma$ do. Recall that in an isoradial graph, every edge belongs to four zig-zag paths, whose homology classes come in opposite pairs: \[(a,b), (-a,-b), (c,d), (-c,-d), \text{ with } a,b,c,d \in \mathbb{Z}.\] 

    Let $e \in E(Z_1)$. By construction, two of the zig-zag paths passing through $e$ are $Z_1$ and its opposite, with homologies $(0,1)$ and $(0,-1)$. The other two have homologies $(c,d)$ and $(-c,-d)$ for some integers $c,d$. 

    We claim that $c \neq 0$. Indeed, if $c=0$, then this zig-zag path would have homology $(0,d)$ and hence would be homologous to a vertical path. Such a path would necessarily intersect $Z_1$ more than once within the fundamental domain, which would lift to multiple intersections in the universal cover. This contradicts the isoradiality assumption, which forbids multiple intersections between zig-zag paths. Therefore, $c \neq 0$, and we may choose $(c,d)$ so that $c>0$.

    Thus, for every edge $e \in E(Z_1)$, there is a unique zig-zag path passing through $e$ whose homology has a strictly positive first coordinate. Let $\mathcal Z_+(\Gamma)$ denote the set of zig-zag paths whose homology class has a positive first coordinate. We now relate these paths to intersections with $Z_1$.

    Let $Z \in \mathcal Z_+(\Gamma)$ have homology $(\alpha,\beta)$ with $\alpha>0$. By the definition of homology, $Z$ crosses the vertical boundary exactly $\alpha$ times (counted with sign). Since the only way to cross the vertical boundary is to use an edge of $Z_1$, $Z$ uses at least $\alpha$ edges of $Z_1$. Therefore, $Z$ intersects $Z_1$ at least $\alpha$ times.

    On the other hand, by isoradiality, two zig-zag paths cannot have more than one common edge in the universal cover. This implies that each lift of $Z$ shares at most one edge with each lift of $Z_1$, and hence $Z$ shares at most $\alpha$ edges with $Z_1$ in the fundamental domain. Therefore, $Z$ shares exactly $\alpha$ edges with $Z_1$.

    Summing over all $Z \in \mathcal Z_+(\Gamma)$ with homology $(\alpha, \beta)$, we obtain that the total number of common edges between $Z_1$ and all such zig-zag paths is

    \[\sum_{Z \in \mathcal Z_+(\Gamma)} \alpha = \sum_{i=1}^n \max(\alpha_i,0) = k.\] 
\end{proof}

Lemma \ref{fundamental_domain matching} has an important role in the proof of our next result. Even though it holds for general isoradial graphs, the next result is limited to isoradial bipartite graphs, as we are also relying on Lemma \ref{bip_zig_newton}.

\begin{lem} \label{N_delta < N}
    Let $\Gamma$ be an isoradial bipartite graph and let $V_3 = \{v \in V(\Gamma): \deg(v) = 3\}$. Then
    \[N(\Gamma_{\Delta_{S}}) \subseteq N(\Gamma),\]
    where $\Gamma_{\Delta_{S}}$ denotes the graph obtained by replacing the vertices $S \subseteq V_3$ with triangles.
\end{lem}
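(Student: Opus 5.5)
Since both polygons are convex, I will prove the inclusion side by side: for every primitive edge direction of $N(\Gamma)$, the polynomial $P_{\Gamma_{\Delta_S}}$ has no monomial strictly beyond the supporting line of that side. The central symmetry from Lemma~\ref{char_poly_properties}(d) pairs opposite sides, so it is enough to treat one of them.

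By Corollary~\ref{NPs and homology classes} and Lemma~\ref{bip_zig_newton}, each side of $N(\Gamma)$ corresponds to a family of zig-zag paths with a common primitive homology class. By Remark~\ref{sl2_NP}, I may assume this side is vertical, so there is a zig-zag path $Z_1$ with $[Z_1]=(0,1)$, and the side is the rightmost one. Its $z$-coordinate is then the maximum over dimer configurations of the horizontal homology. I expect this maximum to equal $k=\sum_i\max(\alpha_i,0)$ from Lemma~\ref{fundamental_domain matching}, possibly up to a translation fixed by the reference matching. To pin this down, I will use Lemma~\ref{vertical zig-zag path} to choose the fundamental domain so that only the $k$ edges of $Z_1$ cross the vertical boundary (Lemma~\ref{fundamental_domain matching}). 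I also place the boundary curve $\gamma_Z$ so that it avoids the inserted triangles; this is possible because the curve passes through edge midpoints, and the long edges of $\Gamma_{\Delta_S}$ correspond to the edges of $\Gamma$. In this domain the same $k$ long edges are the only edges of $\Gamma_{\Delta_S}$ that carry a power of $z$.

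Next I bound the $z$-degree of $P_{\Gamma_{\Delta_S}}$ using the loop expansion of Lemma~\ref{char_poly_properties}(a). A loop configuration uses each crossing edge at most once in each direction, and double edges contribute $z\cdot z^{-1}=1$. So the exponent of $z$ is at most the number of crossing edges traversed left to right, hence at most $k$, and symmetrically at least $-k$. In $\Gamma$ the rightmost side sits at $z$-degree exactly $k$: in the full Kasteleyn setting it equals the width of the Minkowski difference $N_{\mathrm{bip}}(P_1)-N_{\mathrm{bip}}(P_1)$. The same bound applies to $\Gamma$ itself, so this is consistent. Therefore the $z$-extent of $N(\Gamma_{\Delta_S})$ lies inside the $z$-extent $[-k,k]$ of $N(\Gamma)$.

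Repeating this for every side direction of $N(\Gamma)$, each time after an $SL_2(\mathbb{Z})$ change of basis, shows that $N(\Gamma_{\Delta_S})$ lies in the intersection of all the supporting half-planes of $N(\Gamma)$, which is $N(\Gamma)$. The step I expect to be the main obstacle is identifying the supporting line of $N(\Gamma)$ in each direction with exactly $x=k$. This needs $N(\Gamma)$ to have width $2k$ in the $z$-direction: its boundary edge vectors are the zig-zag homology classes, and their positive horizontal components sum to $k$ on the right chain. Getting $2k$ also requires the full non-bipartite $P$ to be normalized so that it is centered at the origin, which follows from part (d) of Lemma~\ref{char_poly_properties}. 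A second point to check is that the boundary curve really can be rechosen for $\Gamma_{\Delta_S}$ without crossing short edges. Here I would use the bijection $\phi$ of Proposition~\ref{homologies_gamma_delta}: $\phi^{-1}(Z_1)$ crosses the boundary exactly through the long edges corresponding to the edges of $Z_1$.
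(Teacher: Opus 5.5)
There is a genuine gap, and it is at the step you flagged. It is not a normalization issue: the rightmost side of $N(\Gamma)$ lies on $x=k/2$, not on $x=k$.

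The number $k$ is the sum of the positive first coordinates of the primitive boundary vectors, taken over all zig-zag paths in both orientations. That sum equals the full width of $N(\Gamma)$. So the width is $k$, and central symmetry places the polygon in $[-k/2,k/2]\times\mathbb{R}$. You have conflated the width of the polygon with the position of its supporting line. For instance, for the hexagonal lattice with two vertices per fundamental domain, $Z_1$ has $k=2$ edges. Yet $N(\Gamma)=\mathrm{conv}\{\pm(1,0),\pm(0,1),\pm(1,-1)\}$ has $x$-extent $[-1,1]$.

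Your bound uses only that each of the $k$ crossing edges is traversed at most once from left to right. It therefore gives $N(\Gamma_{\Delta_S})\subseteq 2N(\Gamma)$ and nothing better. It also never uses that the inserted gadgets are triangles. The same count applies verbatim to the Fisher graph $F_S$ of the square lattice, whose Newton polygon strictly contains $N(S)$ (Figure~\ref{fig:sqfisherlattice}). So counting crossing edges alone cannot prove the lemma.

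What is missing is a vertex constraint.

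\emph{In $\Gamma$.} Consider a crossing edge traversed left to right that is not part of a double edge. Its tail is a vertex of $Z_1$ on the left of $\gamma_Z$. The vertices of $Z_1$ alternate sides, so there are at most $k/2$ such tails. Each has a unique successor in $\gamma$. Double edges contribute nothing, so the $z$-exponent is at most $k/2$.

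\emph{In $\Gamma_{\Delta_S}$.} For $v\in S$ on $Z_1$ this count breaks, because the two edges of $Z_1$ at $v$ now end at two different vertices of $\Delta_v$. The paper repairs this as follows. In any $\gamma\in\mathcal{L}(\Gamma_{\Delta_S})$, at most one vertex of a triangle $\Delta_v=\{x_1,x_2,x_3\}$ has an outgoing long edge not belonging to a double edge. Suppose $x_1\to y_1$ and $x_2\to y_2$ were two such edges. The neighbours of $x_i$ are only $y_i$, $x_{3-i}$ and $x_3$. The predecessor of $x_i$ cannot be $y_i$, since that would create a double edge. It cannot be $x_{3-i}$, whose successor is $y_{3-i}$. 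So $x_3$ would have to precede both $x_1$ and $x_2$, which is impossible.

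Hence each triangle still contributes at most one left-to-right crossing, and the $z$-degree of $P_{\Gamma_{\Delta_S}}$ is at most $k/2$. With this step inserted, the rest of your outline coincides with the paper's proof. That rest is: the fundamental domain from Lemmas~\ref{vertical zig-zag path} and~\ref{fundamental_domain matching}, triangles kept off the boundary, and every side handled via $SL_2(\mathbb{Z})$.
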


\begin{proof}
    As discussed in Remark \ref{sl2_NP}, by applying an appropriate $SL_2(\mathbb Z)$ transformation, we can map any Newton polygon to a polygon that has at least one vertical edge vector. Using this, we can fix one side of the Newton polygon and, by an appropriate $SL_2(\mathbb Z)$ transformation, map it to a vertical side of the polygon, such that the rest of the polygon is to the left of that vertical line.

    By Lemma \ref{fundamental_domain matching}, we can embed the graph $\Gamma$ such that the number of intersections with the right boundary is equal to $k = \sum_{i=1}^n \max(\alpha_i,0)$, where $e_i = (\alpha_i, \beta_i)$ are the primitive edge vectors of $N(\Gamma)$. Consequently, the maximal matching crossing the right boundary of the fundamental domain has $k/2$ edges. We can also describe $k/2$ as the largest $z$-coordinate fitting in the Newton polygon. 

    We choose a subset $S \subseteq V_3$ and replace each vertex in it with a triangle, assuming there is no vertex on the boundary of the fundamental domain and that the triangles are small enough so that they do not touch the boundary. Let $v \in S$, and let $\Delta_v$ denote the triangle replacing $v$ in $\Gamma_{\Delta_{S}}$. If we take any collection of loops $\gamma \in \mathcal{L}(\Gamma_{\Delta_{S}})$, then it cannot contain two outgoing edges from the triangle $\Delta_v$.
    
    Assume the contrary, that $v_1, v_2 \in V(\Delta_v)$ have outgoing edges, then the third vertex $v_3$ has a double edge with an external vertex. This means that either $\overrightarrow{v_1v_2} \in \overrightarrow{E}(\gamma)$ or $\overrightarrow{v_2v_1} \in \overrightarrow{E}(\gamma)$, which is not possible. Therefore, we conclude that the maximal matching that crosses the right boundary of the fundamental domain of $\Gamma_{\Delta_S}$ remains $k/2$. The first coordinate of the homology class of $\gamma$ is at most $k/2$. So, we get $N(\Gamma_{\Delta_{S}}) \subset (-\infty, k/2] \times \mathbb{R}$.

    By performing an $SL_2(\mathbb{Z})$ transformation on each side of the Newton polygon and repeating the same argument, and then performing the inverse of each of these $SL_2(\mathbb{Z})$ transformations, we conclude that $N(\Gamma_{\Delta_{S}}) \subseteq N(\Gamma)$. 
\end{proof}

Using the same arguments as in Lemma \ref{fundamental_domain matching} and Lemma \ref{N_delta < N}, we can obtain the following result.

\begin{cor} \label{N < zigzag polygon}
    Let $\Gamma$ be a non-bipartite isoradial torus graph. Let $\mathcal{N}$ be the convex integral polygon obtained from the homology classes of the zig-zag paths of $\Gamma$. Then
    \[N(\Gamma) \subseteq \mathcal{N}.\]
\end{cor}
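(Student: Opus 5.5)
We follow the two-step scheme of Lemma~\ref{N_delta < N}, with the convex polygon $\mathcal{N}$ built from the homology classes of the zig-zag paths of $\Gamma$ playing the role of $N(\Gamma)$. Since $\mathcal{N}$ is convex, it equals the intersection of the half-planes cut out by its sides. It therefore suffices to show that, for each side of $\mathcal{N}$, the polygon $N(\Gamma)$ lies in the corresponding half-plane.

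Fix a side of $\mathcal{N}$. By Remark~\ref{sl2_NP}, apply an $SL_2(\mathbb{Z})$ change of basis of $H_1(\mathbb{T},\mathbb{Z})$ so that this side is vertical with primitive direction $(0,1)$ and $\mathcal{N}$ lies to its left. This transformation acts simultaneously on $N(\Gamma)$ and on the homology classes of the zig-zag paths, so the inclusion is unaffected. The primitive vector $(0,1)$ is realized by a zig-zag path $Z_1$. By Lemma~\ref{vertical zig-zag path}, we may choose the fundamental domain so that its vertical boundary crosses exactly the edges of $Z_1$, each once. Lemma~\ref{fundamental_domain matching} uses only isoradiality and not bipartiteness, so $Z_1$ has exactly $k=\sum_i \max(\alpha_i,0)$ edges. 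Here $k/2$ is the largest first coordinate of a point of $\mathcal{N}$: the side vectors with positive first coordinate sum to the horizontal width of $\mathcal{N}$. By central symmetry, which holds because zig-zag classes come in opposite pairs, the rightmost abscissa of $\mathcal{N}$ is half of that width.

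Next, bound the $z$-degree of $P$ using Lemma~\ref{char_poly_properties}(a). A monomial $z^iw^j$ arises from some $\gamma\in\mathcal{L}(\Gamma)$, and $i$ is the signed number of crossings of $\gamma$ with the vertical boundary. Since $\gamma$ is a union of vertex-disjoint oriented loops and double edges, each vertex has out-degree one in $\gamma$. Consequently the edges of $\gamma$ crossing the boundary form a set of edges of $Z_1$ that is a partial matching in the relevant sense. Double edges contribute zero net crossing. By Lemma~\ref{char_poly_properties}(c), the nontrivial loops have parallel classes, so their crossings do not all cancel in a way that could help.

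The step I expect to be the main obstacle is showing that at most $k/2$ edges of $Z_1$ can be traversed left-to-right by the loops of $\gamma$. For this I would argue that consecutive edges of $Z_1$ share a vertex, because a zig-zag path is a closed walk of length $k$. Vertex-disjointness of the loops then forbids using two consecutive edges of $Z_1$ as crossings in the same direction, since their common vertex would need two outgoing or two incoming edges of $\gamma$. This gives $i\le k/2$, i.e.\ $N(\Gamma)\subseteq(-\infty,k/2]\times\mathbb{R}$.

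Two points need care here. First, if $Z_1$ revisits a vertex, the consecutive-edge argument must be adapted; isoradiality, which excludes self-intersections in the lift, should rule this out. Second, crossings in opposite directions partially cancel, which only lowers $i$ and so does not hurt the bound. Repeating the argument for every side of $\mathcal{N}$ and undoing the $SL_2(\mathbb{Z})$ transformations gives $N(\Gamma)\subseteq\mathcal{N}$.
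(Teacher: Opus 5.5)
Your proposal is correct and follows the paper's route. The paper just invokes the arguments of Lemmas~\ref{fundamental_domain matching} and~\ref{N_delta < N}: choose the fundamental domain along a zig-zag path $Z_1$ of class $(0,1)$, get $|Z_1|=k$, bound the net horizontal crossing of every $\gamma\in\mathcal{L}(\Gamma)$ by $k/2$, and intersect over all sides. Your observation that no two cyclically consecutive edges of $Z_1$ can both be traversed left-to-right justifies the paper's vague ``maximal matching'' step more cleanly. It works because the shared vertex lies on the same side of the boundary curve at both crossings. That argument never uses that the vertices of $Z_1$ are distinct, so you can drop your first ``point of care''. This matters because the fix you proposed is false: isoradiality does not prevent a zig-zag path from passing twice through the same vertex of the torus graph, via two different lifts.
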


Recall that for bipartite graphs, the equality holds.

\begin{defi}
    If there exists $\gamma \in \mathcal{L}(\Gamma)$ with a homology class $(i,j) \in \mathbb{Z}^2$ such that every non-trivial loop in $\gamma$ has the same homology class, then $(i,j)$ is called \emph{realizable}. 
\end{defi}

The terminology of ``realizable'' is justified by the following lemma.

\begin{lem} \label{nonvanishing_coeffs}
    Let $\Gamma$ be a torus graph with complex edge weights, and let $(i,j)$ be a realizable homology class for $\Gamma$. Let $\alpha_{i,j} z^i w^j$ be a monomial in $P(z,w)$ for $(i,j) \in \mathbb{Z}^2$ , where $\alpha_{i,j}$ is a polynomial function of the edge weights. Then $\alpha_{i,j}$ is not the zero polynomial.
\end{lem}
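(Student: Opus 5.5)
The plan is to show that $\alpha_{i,j}$, viewed as a polynomial in the edge weights treated as independent formal variables, contains a monomial whose total coefficient is nonzero. I would use the expansion of Lemma \ref{char_poly_properties}(a). Every $\gamma\in\mathcal L(\Gamma)$ of homology $(i,j)$ contributes $\pm\prod_{e\in E(\gamma)}\kappa(e)\wt(e)$, where double edges contribute the square of their weight. The task is to find one $\gamma_0$ whose weight monomial $\prod_{e}\wt(e)^{m_e}$ is produced only by $\gamma_0$ and possibly by a few other configurations whose signs agree with that of $\gamma_0$.

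First, fix a realizable $\gamma_0$ of class $(i,j)$ in which every non-trivial loop has the same class. Among all such $\gamma_0$, choose one with additional structure: as many vertices as possible covered by double edges, and any remaining topologically trivial loops of even length (odd ones cancel by Lemma \ref{char_poly_properties}(b)). The key observation is that the weight monomial $m(\gamma)=\prod \wt(e)^{m_e}$ determines the underlying unoriented edge multiset of $\gamma$. The configurations $\gamma$ with the same monomial as $\gamma_0$ therefore differ from it only in the orientations of its simple loops, since double edges are forced. Reversing a loop changes its homology class from $h$ to $-h$. The coefficient of $z^iw^j$ thus collects exactly those orientation choices $\epsilon\in\{\pm1\}^{\text{loops}}$ with $\sum \epsilon_L [L]=(i,j)$. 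Trivial loops can be oriented freely, and because all non-trivial loops are parallel with the same class $h$ and $(i,j)=m h$ with $m$ the number of such loops, all non-trivial loops must keep their orientation. The exception is $(i,j)=(0,0)$, which I treat separately.

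Next, I would compare signs. Reversing an even trivial loop $L$ changes the sign $(-1)^{|V|-\ell}$ not at all, and it changes the product of Kasteleyn signs by $(-1)^{|L|}=1$. The contributions therefore add rather than cancel, giving the coefficient $\pm 2^{t}$ times a nonzero monomial, where $t$ is the number of trivial loops. A cleaner alternative is to first replace every even trivial loop by alternate double edges, which is possible since an even cycle has a perfect matching. This gives $t=0$ and a unique configuration.

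For $(i,j)=(0,0)$, I would simply take $\gamma_0$ to be a perfect matching viewed as a family of double edges, which exists by hypothesis. Its monomial $\prod_{e\in M}\wt(e)^2$ arises only from $\gamma_0$, because each exponent equal to $2$ on an edge forces a double edge there, or a 2-loop, which is the same object.

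The main obstacle is the claim that the monomial determines the edge multiset, specifically ruling out a different $\gamma$ that uses the same edges with the same multiplicities but organized differently, such as a double edge versus two loops sharing that edge. Disjointness of the loops excludes the latter. Multiple edges between the same pair of vertices carry distinct formal weight variables, so the argument still goes through in that case.
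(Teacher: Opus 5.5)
Your proposal is correct and is essentially the paper's argument. After replacing the even trivial loops of a realizable witness by double edges (a reading of ``realizable'' as excluding odd trivial loops, which you share with the paper), the double edges are forced and reversing any of the parallel non-trivial loops changes the class $(i,j)$, so exactly one configuration contributes.

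The only difference is how you isolate that configuration. You extract the coefficient of the weight monomial $\prod_e \wt(e)^{m_e}$, whereas the paper sets all weights outside $E(\gamma'_{i,j})$ to zero. Your version is marginally cleaner, since it removes the paper's extra case of turning an even non-trivial loop into double edges.
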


\begin{proof}
    We take $\gamma_{i,j} \in \mathcal L(\Gamma)$ whose homology class $(i,j) \in \mathbb{Z}^2$ is realizable.
    That means $\gamma_{i,j}$ is composed of trivial loops of even length and, by Lemma \ref{char_poly_properties} (c) non-trivial loops with equal homology classes (which can be of odd length in the non-bipartite case). Since each trivial loop is of even length, we can instead look at the collection of loops $\gamma'_{i,j}$, which takes the same non-trivial loops as $\gamma_{i,j}$, but replaces each trivial loop with a union of double edges. Notice that $\gamma'_{i,j}$ has the same homology class as $\gamma_{i,j}$.
    
    We evaluate the polynomial $\alpha_{i,j}$ by assigning $\wt(e) = 0$ for all edges $e \in E(\Gamma) \setminus E(\gamma'_{i,j})$, where $E(\gamma'_{i,j})$ denotes all the edges of the collection of loops $\gamma'_{i,j}$. We consider all collections of loops with the homology class $(i, j)$ whose edge sets are subsets of the edges in $\gamma'_{i,j}$. Note that each double edge in $\gamma'_{i,j}$ must remain in the collection since all the other edges incident to its end vertices have a weight of zero. There are two ways of modifying $\gamma'_{i,j}$: one can either reverse the orientation of some non-trivial loop or transform a non-trivial loop of even length into a union of double edges. Both types of modifications change the homology class.
    
    Consequently, the only contribution to $\alpha_{i,j}$ comes from $\gamma'_{i,j}$ itself, and that contribution is non-zero by construction. If all the edge weights of $\gamma'_{i,j}$ are non-zero, the resulting product remains non-zero. This shows that the polynomial $\alpha_{i,j}$ is not identically equal to zero.
\end{proof}

As we already discussed, when a torus graph is non-bipartite, if we take one of the monomials appearing in its characteristic polynomial, the sign of the contributions is not consistent. Therefore, we do not know whether the contributions of the loops with the same homologies might cancel. In the following result, we show that they do not cancel when the monomials are taken from the extremal points of the Newton polygons.

\begin{lem} \label{ext_homology}
    Let $\Gamma$ be a torus graph. Let $(I,J) \in \mathbb{Z}^2$ be an extremal point of $N(\Gamma)$. Then $(I,J)$ is a realizable homology class.
\end{lem}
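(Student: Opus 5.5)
Since $(I,J)$ is an extremal point of $N(\Gamma)$, the coefficient of $z^Iw^J$ in $P(z,w)$ is non-zero. By Lemma \ref{char_poly_properties} (a), this coefficient is a signed sum over those $\gamma\in\mathcal L(\Gamma)$ whose total homology class is $(I,J)$. It is therefore non-empty after the cancellations of Lemma \ref{char_poly_properties} (b). Hence there is at least one $\gamma$ with homology $(I,J)$ all of whose topologically trivial loops have even length. The plan is to show that such a $\gamma$ can be chosen so that all its non-trivial loops have the \emph{same} homology class, rather than merely parallel ones as guaranteed by Lemma \ref{char_poly_properties} (c).

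Suppose $\gamma$ has non-trivial loops whose classes are $\pm h$ for a primitive direction $h$ (up to multiplicity). Then $(I,J)=(p-q)h$ for some integers $p,q\ge 0$ counting loops oriented each way. If $p,q>0$, reversing a single loop of class $-h$ produces $\gamma'\in\mathcal L(\Gamma)$ with class $(I,J)+2h'$, where $h'$ is that loop's reversed class; reversing a loop of class $+h$ gives $(I,J)-2h'$. Similarly, an even-length non-trivial loop can be replaced by a union of double edges, which shifts the homology by minus that loop's class. Both operations stay in $\mathcal L(\Gamma)$.

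The key step, and the main obstacle, is to show that these modified configurations actually contribute monomials, so that their exponents lie in $N(\Gamma)$. If they do, $(I,J)$ would be the midpoint of $(I,J)\pm 2h'$, contradicting extremality. The difficulty is that a single configuration in $\mathcal L(\Gamma)$ need not give a non-vanishing coefficient, since cancellations can occur in the non-bipartite case. To get around this, I would argue with generic weights, as in Lemma \ref{nonvanishing_coeffs}. I would treat the weights as indeterminates, so that the coefficient $\alpha_{i,j}$ is a polynomial. I would then choose a monomial in the edge weights appearing in $\alpha_{I,J}$ with non-zero coefficient. Specialising the weights off the support of the corresponding $\gamma$ to zero, only configurations supported on the edges of $\gamma$ survive, and these are precisely the loop reversals and double-edge replacements above. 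Among them, the configuration $\gamma'$ with a loop reversed has a weight monomial identical to that of $\gamma$, because reversing a loop does not change its edge weights, only its $z,w$ exponents. It therefore contributes to a different coefficient $\alpha_{(I,J)\pm 2h'}$ with a coefficient that is uncancellable after this specialisation. So $(I,J)\pm 2h'\in N(\Gamma)$ whenever such a reversal exists.

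Finally, I would pass from indeterminate to actual positive weights. If extremal points are understood for generic weights, the above suffices; otherwise I would note that the extremality of $(I,J)$ already forces the relevant support, and the contradiction shows $p=0$ or $q=0$. The even-length trivial loops are already allowed by the definition of realizable, so this yields a $\gamma$ with class $(I,J)$ all of whose non-trivial loops share the same class, as required.
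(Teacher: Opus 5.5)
There is a genuine gap at exactly the step you flag as key: the claim that the single-reversal configuration gives an ``uncancellable'' coefficient at $(I,J)\pm 2h$ is false in general.

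\textbf{Why the single reversal fails.} Fix the weight monomial $m$ of $\gamma$. The configurations with weight monomial $m$ are exactly the orientation choices on the cycles of the support of $\gamma$, since the squared edges are forced to be double edges. Because $\nu_{v\to u}(e)(z,w)=-\nu_{u\to v}(e)(z^{-1},w^{-1})$, reversing a loop of length $L$ multiplies its contribution by $(-1)^L$ and inverts its monomial. The support contains no odd trivial loops, since otherwise the $m$-coefficient of $\alpha_{I,J}$ would vanish. Hence the $m$-part of $P$ is, up to a non-zero constant,
\[
\prod_{i=1}^{s}\bigl(x+\eta_i x^{-1}\bigr),\qquad x=z^{h_1}w^{h_2},\quad s=p+q,\quad \eta_i=(-1)^{L_i}.
\]
The loop you reverse is not the only configuration with weight monomial $m$ and homology $(I,J)\pm 2h$. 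Every other choice of which loops point along $h$ with the same count contributes as well, with signs governed by the parities $L_i$, and these can cancel. For example, with four even and four odd non-trivial loops you get $(x+x^{-1})^4(x-x^{-1})^4=(x^2-x^{-2})^4$. For $p=6$, $q=2$ the coefficient at $(I,J)=4h$ is $-4\neq 0$, while the coefficients at $6h$ and $2h$ are zero. So your midpoint argument with $(I,J)\pm 2h$ does not go through.

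\textbf{The repair.} Reverse all loops of one orientation simultaneously, as the paper does. The pattern with all $s$ non-trivial loops along $h$ is the only orientation of the support with homology $sh$. The only remaining freedom is the orientation of the $\tau$ even trivial loops, whose two orientations contribute equally. Hence the $m$-coefficient of $\alpha_{sh}$ is $\pm 2^{\tau}$ times a product of weights, which is non-zero. The same holds for $-sh$, or you can use Lemma~\ref{char_poly_properties}~(d). Since $p,q\geq 1$, the point $(I,J)=(p-q)h$ lies strictly inside the segment from $-sh$ to $sh$, contradicting extremality.

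\textbf{Comparison with the paper.} The paper uses $(0,0)$ and $(a+b)h$ instead. It takes for granted that the mere existence of $\gamma'$ puts $(a+b)h$ in $N(\Gamma)$. Your weight-monomial device is precisely what justifies that step, but only for indeterminate (generic) weights. Your last paragraph does not actually handle fixed positive weights, where different supports can cancel. The lemma should therefore be read with generic weights, which is also how it is combined with Lemma~\ref{nonvanishing_coeffs}.
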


\begin{proof}
    Let $\gamma \in \mathcal{L}(\Gamma)$ with homology $(I, J)$. Then there exists $(i,j)$ such that every topologically non-trivial loop in $\gamma$ has homology either $(i,j)$ or $(-i,-j)$. Denote by $a$ the number of loops of homology $(i,j)$ and by $b$ the number of loops of homology $(-i,-j)$. Then $(I,J)=((a-b)i,(a-b)j)$. By reversing the direction of all the loops of homology $(-i,-j)$, one obtains another collection $\gamma'$, which has homology $(I',J')=((a+b)i,(a+b)j)$. Up to changing $(i,j)$ to $(-i,-j)$, we can assume that $a\geq b\geq 0$. If $b$ is non-zero, then $(I',J')$ is distinct from $(I,J)$ and a contradiction arises from the fact that $(I,J)$ can be written as the convex combination of $(0,0)$ (achievable by double edges only) and $(I',J')$.
\end{proof}

We are now ready to prove the reverse inclusion for Newton polygons. Unlike the first inclusion, this result does not require the graph to be isoradial or even bipartite.

\begin{lem} \label{N(Gamma) < N(Gamma_Delta_v)}
    Let $\Gamma$ be a torus graph containing a $3$-valent vertex $v \in V(\Gamma)$, and let $\Gamma_{\Delta_v}$ be the graph obtained by replacing the vertex $v$ with a triangle. Then
    \[N(\Gamma)\subseteq N(\Gamma_{\Delta_v}).\]
\end{lem}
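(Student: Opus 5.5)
Since both polygons are convex hulls of their extremal points, it suffices to show that every extremal point $(I,J)$ of $N(\Gamma)$ is a point of $N(\Gamma_{\Delta_v})$. The plan has three steps. First, Lemma \ref{ext_homology} realizes $(I,J)$ by a collection of loops in $\Gamma$. Second, we lift that collection to $\Gamma_{\Delta_v}$ without changing its homology class. Third, Lemma \ref{nonvanishing_coeffs} shows that the corresponding coefficient of the characteristic polynomial of $\Gamma_{\Delta_v}$ is nonzero.

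For the first step, Lemma \ref{ext_homology} gives $\gamma \in \mathcal{L}(\Gamma)$ with homology $(I,J)$ in which every non-trivial loop has the same homology class. The realizability argument also lets us assume every trivial loop is replaced by double edges, as in the proof of Lemma \ref{nonvanishing_coeffs}. Let $\Delta_v = \{v_1,v_2,v_3\}$, where $v_i$ is attached to the neighbour $u_i$ of $v$ through the long edge corresponding to $vu_i$. We lift $\gamma$ to $\gamma' \in \mathcal{L}(\Gamma_{\Delta_v})$ according to how $\gamma$ meets $v$.
\begin{enumerate}[(i)]
\item If $v$ lies on a double edge $vu_1$, take the double edge $v_1u_1$ together with the double edge $v_2v_3$.
\item If $v$ lies on a loop entering along $u_iv$ and leaving along $vu_j$, reroute it as $u_i\to v_i\to v_k\to v_j\to u_j$, where $k$ is the third index. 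This absorbs the extra vertex.
\end{enumerate}
All other edges of $\gamma$ are kept unchanged.

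In case (i) the homology is clearly unchanged. In case (ii) the triangle sits in a small disk, so the rerouted loop is homotopic to the original one. Its homology class is therefore unchanged, and it stays non-trivial or trivial as before.

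The step I expect to be the main obstacle is checking that $\gamma'$ is realizable in the sense required by Lemma \ref{nonvanishing_coeffs}. Two points need verification.
\begin{itemize}
\item The non-trivial loops of $\gamma'$ all have the same homology class. This holds because the lift preserves each loop's class.
\item $\gamma'$ contains no trivial loop of odd length. This could only fail in case (ii) if the loop through $v$ were trivial, since the rerouting changes its length by two, from even to even. By the first step, however, every trivial loop of $\gamma$ is already a union of double edges, so case (ii) arises only for non-trivial loops. The only new trivial pieces are the double edges of case (i).
\end{itemize}
Hence $(I,J)$ is a realizable homology class for $\Gamma_{\Delta_v}$. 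Lemma \ref{nonvanishing_coeffs} then says that its coefficient is not the zero polynomial in the edge weights.

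A final subtlety is that the statement concerns fixed positive weights, while Lemma \ref{nonvanishing_coeffs} only gives a nonzero polynomial. I would handle this in one of two ways. The first option is to interpret $N$ generically in the weights, as the paper implicitly does. The second is to check that the argument of Lemma \ref{nonvanishing_coeffs} applies at $(I,J)$ for actual positive weights. For this I would compare with the extremality argument of Lemma \ref{ext_homology} in $\Gamma_{\Delta_v}$, using that extremal monomials of $N(\Gamma_{\Delta_v})$ only receive contributions of a single sign. I would try the generic interpretation first, since it matches how the paper's earlier lemmas are stated.
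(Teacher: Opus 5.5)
Your proposal is correct and is essentially the paper's proof: you realize the extremal class by Lemma \ref{ext_homology}, lift it with the same two local replacements, and conclude with Lemma \ref{nonvanishing_coeffs}. The replacements are that a double edge $vu_1$ becomes $u_1v_1$ plus $v_2v_3$, and a passage $u_i\to v\to u_j$ is rerouted through all three triangle vertices. Your extra checks only make explicit what the paper leaves tacit, namely that trivial loops may be taken even (odd ones cancel by Lemma \ref{char_poly_properties}(b)) and that the conclusion holds for generic weights. Keep the generic reading, since the single-sign property of extremal coefficients that you float as an alternative is not established in the paper for non-bipartite graphs.
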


\begin{proof}
    Let $\gamma \in \mathcal{L}(\Gamma)$ whose homology class corresponds to an extremal point of $N(\Gamma)$. We want to show that there exists $\gamma' \in \mathcal{L}(\Gamma_{\Delta_v})$ with the same homology. It suffices to distinguish between two cases. In the first case, the vertex $v$ is incident to a double edge in $\gamma$. Using the notation of Figure \ref{fig:gammadeltav}, this double edge is replaced in $\gamma'$ by two double edges $u_1v_1$ and $v_2v_3$. 
    
    \begin{figure}[h!]
    \centering
    \includegraphics[width=0.5\textwidth]{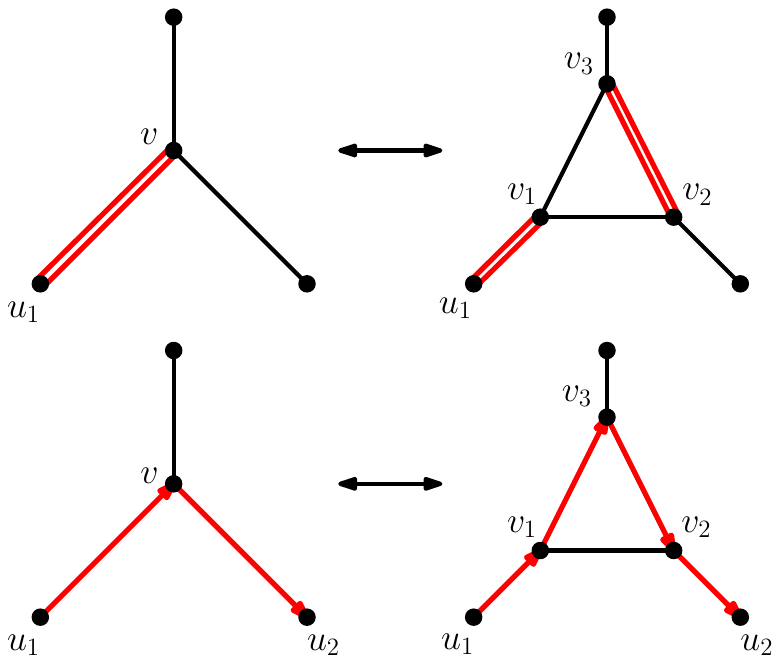}
    \caption{\label{fig:gammadeltav} The correspondence between the loops of $\Gamma$ and $\Gamma_{\Delta_v}$.}
    \end{figure}
    
    In the second case, $v$ is adjacent to two different vertices in $\gamma$, say $u_1$ and $u_2$. Then we can replace the path $u_1vu_2$ of the loop in $\gamma$ with the path $u_1v_1v_3v_2u_2$. See Figure \ref{fig:gammadeltav}. Note that $\gamma'$ indeed visits all the vertices of $\Gamma_{\Delta_v}$ and the homologies of each loop are the same between $\gamma$ and $\gamma'$.

    To conclude the proof, by Lemma \ref{ext_homology} the homology of $\gamma$ is realizable for $\Gamma$. By the construction above, the homology of $\gamma'$ remains realizable, as each loop of $\gamma$ is replaced by a loop in $\gamma'$ on $\Gamma_{\Delta_v}$ with the same homology. Therefore, by Lemma \ref{nonvanishing_coeffs}, we conclude that the coefficients corresponding to the extremal points of $N(\Gamma)$ do not vanish in the characteristic polynomial of the graph $\Gamma_{\Delta_v}$.
\end{proof}

Using Lemmas \ref{N_delta < N} and \ref{N(Gamma) < N(Gamma_Delta_v)}, we obtain the following result.

\begin{thm} \label{equal newton polygons delta}
    Let $\Gamma$ be an isoradial bipartite graph. Then
    \[N(\Gamma_{\Delta_{S}}) = N(\Gamma),\]
    where $\Gamma_{\Delta_{S}}$ denotes the graph obtained by replacing the vertices $S \subseteq V_3 = \{v \in V(\Gamma): \deg(v) = 3\}$ with triangles.
\end{thm}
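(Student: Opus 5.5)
The plan is to prove the two inclusions separately and combine them. The inclusion $N(\Gamma_{\Delta_S}) \subseteq N(\Gamma)$ is exactly Lemma \ref{N_delta < N}, which applies directly because $\Gamma$ is isoradial and bipartite and $S \subseteq V_3$. So the remaining work is the reverse inclusion $N(\Gamma) \subseteq N(\Gamma_{\Delta_S})$.

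For the reverse inclusion I would induct on $|S|$, iterating Lemma \ref{N(Gamma) < N(Gamma_Delta_v)}. Enumerate $S = \{v_1, \dots, v_m\}$ and set $\Gamma^{(0)} = \Gamma$ and $\Gamma^{(k)} = \Gamma^{(k-1)}_{\Delta_{v_k}}$. The construction of Definition \ref{const of gamma delta} is local: blowing up $v_k$ does not change the neighbourhood of any other $v_j$. Hence each $v_k$ is still a $3$-valent vertex of $\Gamma^{(k-1)}$, and $\Gamma^{(m)} = \Gamma_{\Delta_S}$.

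The key point is that Lemma \ref{N(Gamma) < N(Gamma_Delta_v)} was stated for an arbitrary torus graph containing a $3$-valent vertex, with no isoradiality or bipartiteness assumption. It therefore applies at every stage, even though $\Gamma^{(k-1)}$ is non-bipartite for $k \geq 2$. Each application gives $N(\Gamma^{(k-1)}) \subseteq N(\Gamma^{(k)})$. The only hypothesis to verify is that $\Gamma^{(k)}$ still has a perfect matching, which is needed for the setting of Lemma \ref{char_poly_properties}. This follows from the local replacement used in that lemma's proof: given a dimer cover, match $v$'s partner $u_1$ with $v_1$ and add the short edge $v_2v_3$. Chaining the inclusions gives $N(\Gamma) \subseteq N(\Gamma_{\Delta_S})$.

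Combining the two inclusions gives the equality. I do not expect a genuine obstacle, since both directions are already in place. The one point needing care is making sure Lemma \ref{N(Gamma) < N(Gamma_Delta_v)} can be iterated on non-bipartite intermediate graphs. That lemma uses Lemma \ref{ext_homology} and Lemma \ref{nonvanishing_coeffs}, both of which hold for general torus graphs. The extremal points of $N(\Gamma^{(k-1)})$ are realizable and lift to realizable classes of $\Gamma^{(k)}$, and because Newton polygons are convex hulls, containing the extremal points suffices.
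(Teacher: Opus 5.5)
Your proposal is correct and follows the paper, which obtains the theorem simply by combining Lemma \ref{N_delta < N} with Lemma \ref{N(Gamma) < N(Gamma_Delta_v)}. Your induction over $S$, justified by the fact that the latter lemma needs neither bipartiteness nor isoradiality, just makes explicit the iteration the paper leaves implicit.

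One caveat applies to both your argument and the paper's. Lemma \ref{nonvanishing_coeffs} only shows that the extremal coefficients are nonzero as polynomials in the weights, so the reverse inclusion, and hence the equality, holds for generic weights on $\Gamma_{\Delta_S}$ rather than for all positive weights. For example, in \eqref{char for F_11} the $z^{\pm 1}$ terms vanish when $c=1$.
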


\begin{rem}
    It may seem that the statement in Theorem \ref{equal newton polygons delta} holds in general. However, we can create an example of a graph, shown in Figure \ref{fig:nonexample}, where replacing a vertex by a triangle changes the Newton polygon. We can check that this graph is not isoradial. On the other hand, it may be possible to generalize the statement by replacing isoradial with minimal bipartite graphs.
\end{rem}

\begin{figure}[h!]
    \centering
    \includegraphics[width=0.8\textwidth]{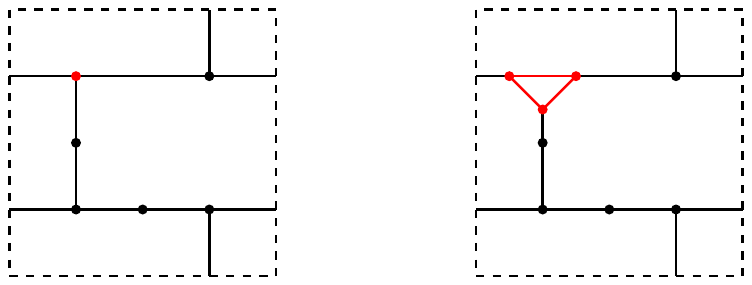}
    \caption{\label{fig:nonexample} The graph on the left contains no collection of loops with total homology class $(2,0)$, while the graph on the right does.}
\end{figure}

Recall that we write $\Gamma_{\Delta}$ if we take $S = V$ in $\Gamma_{\Delta_S}$. The following result follows from Proposition \ref{homologies_gamma_delta} and Theorem \ref{equal newton polygons delta}.

\begin{cor}
    Let $\Gamma$ be an isoradial bipartite $3$-valent graph. Then the Newton polygon $N(\Gamma_\Delta)$ is obtained from the homology classes of the zig-zag paths of $\Gamma_\Delta$.
\end{cor}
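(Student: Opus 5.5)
The corollary will follow by chaining three earlier results, with all the work being bookkeeping of multisets and polygons. Since $\Gamma$ is $3$-valent, we have $V_3 = V(\Gamma)$. Applying Theorem \ref{equal newton polygons delta} with $S = V(\Gamma)$ then gives $N(\Gamma_\Delta) = N(\Gamma)$. So the only task is to identify the primitive edge vectors of $N(\Gamma)$ with the homology classes of zig-zag paths of $\Gamma_\Delta$.

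\textbf{Step 1 (from $N(\Gamma)$ to the zig-zag paths of $\Gamma$).} Since $\Gamma$ is isoradial bipartite, it is minimal. Corollary \ref{NPs and homology classes} therefore gives a bijection between the multiset $H(\mathcal{Z}(\Gamma))$ and the primitive edge vectors of $N(\Gamma)$, the Newton polygon of the full characteristic polynomial. Here $\mathcal{Z}(\Gamma)$ means the zig-zag paths in the general, non-bipartite sense, so each bipartite zig-zag path contributes both orientations. This matches the central symmetry of $N(\Gamma)$ coming from Lemma \ref{char_poly_properties}(d).

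\textbf{Step 2 (transfer to $\Gamma_\Delta$).} A $3$-valent graph is in particular $2,3$-valent. Proposition \ref{homologies_gamma_delta} therefore applies and gives $H(\mathcal{Z}(\Gamma)) = H(\mathcal{Z}(\Gamma_\Delta))$ as multisets, through the bijection $\phi$, which preserves homology classes.

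\textbf{Step 3 (combine).} Putting the two steps together, the primitive edge vectors of $N(\Gamma_\Delta) = N(\Gamma)$ are in bijection with $H(\mathcal{Z}(\Gamma_\Delta))$. Concretely, one orders the classes by slope, concatenates them, and the result traces the boundary of the polygon, which determines it up to translation. Central symmetry fixes the translation, since it forces the polygon to be centered at the origin.

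The only point that needs care is conventions in Step 1. One must check that Corollary \ref{NPs and homology classes} counts exactly the multiset $\mathcal{Z}(\Gamma)$ used in Proposition \ref{homologies_gamma_delta}, namely both orientations of each bipartite zig-zag path, with no doubling or halving. Both results are phrased for the same notion of (non-bipartite) zig-zag path, so they should agree. Once that is confirmed, the argument is immediate.
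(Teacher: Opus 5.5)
Your proposal is correct and takes the same approach as the paper. The paper derives the corollary from Theorem~\ref{equal newton polygons delta} (with $S=V_3=V(\Gamma)$) and Proposition~\ref{homologies_gamma_delta}; it uses Corollary~\ref{NPs and homology classes} only implicitly, whereas you state that step explicitly. Your concern about conventions resolves as you expect: for bipartite $\Gamma$, the general zig-zag paths are exactly the bipartite ones together with their reversals, and these match the primitive edge vectors of $N_{\mathrm{bip}}(P_1)-N_{\mathrm{bip}}(P_1)$ with no doubling or halving.
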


\section{Corner graphs} \label{corner_graphs}

We define a \emph{corner} as a pair of a vertex and a face that are adjacent. Each corner $(v,f)$ has four neighboring corners: the previous/next corner around $v$ and the previous/next corner around $f$. 

\begin{defi}
    For a given torus graph $\Gamma$, we obtain the corner graph $C_{\Gamma}$ by placing a vertex for each corner and adding an edge between any two neighboring corners. Note that $C_{\Gamma}$ is a $4$-valent graph. 
\end{defi}

This construction was mentioned in the context of the Ising model by Chelkak~\cite{Chelkak}. Kenyon, Sun, and Wilson~\cite{KenyonSunWilson} considered the corner graph obtained from the hexagonal lattice. They showed that for a special choice of edge weights, the characteristic polynomial of the obtained graph is reducible, and it intersects the unit torus $(\mathbb C^*)^2$ in the same way as the characteristic polynomial of the Fisher graph.

\subsection{Homology classes of zig-zag paths}

Recall that $H(\mathcal{Z}(\Gamma))$ denotes the multiset containing all the homology classes of zig-zag paths in $\Gamma$. Let $\tau^\ell, \tau^r: \overrightarrow{E}(\Gamma) \rightarrow \overrightarrow{E}(C_\Gamma)$ be the maps defined on oriented edges by \[\tau^\ell(\overrightarrow{uv}) = \overrightarrow{c'_u c'_v}, \qquad \tau^r(\overrightarrow{uv}) = \overrightarrow{c''_u c''_v},\] where $c'_u = (u, f')$, $c'_v = (v, f')$, and $c''_u = (u, f'')$, $c''_v = (v, f'')$. Here, $f'$ and $f''$ are the two faces of $\Gamma$ incident to the edge $uv$, such that $f'$ (resp. $f''$) lies to the left (resp. right) face when traversing the edge from $u$ to $v$. 

We further define maps $\phi^\ell, \phi^r: \mathcal{Z}(\Gamma) \rightarrow \mathcal{Z}(C_\Gamma)$ as follows. For a zig-zag path $Z$ in $\Gamma$, the path $\phi^\ell(Z) = Z'$ (resp. $\phi^r(Z) = Z''$) is the unique zig-zag path in $C_\Gamma$ whose oriented edges contain $\tau^\ell(e)$ (resp. $\tau^r(e)$) for every $e \in \overrightarrow{E}(Z)$. To ensure that $Z'$ and $Z''$ are well-defined zig-zag paths, it may be necessary to insert additional edges. More precisely, suppose that $Z$ makes a right turn at a vertex $v$, traversing the vertices $u \to v \to w$. Then $Z'$ is obtained by inserting the oriented edges $\overrightarrow{c'_v c''_v}$ and $\overrightarrow{c''_v c'''_v}$, where $c'''_v = (v,f''')$, and $f'''$ denotes the face distinct from $f''$ that is incident to the edge $vw$. 

On the other hand, no additional edges are required to obtain the zig-zag path $\phi^r(Z) = Z''$ when $Z$ turns right at $v$. The case where $Z$ makes a left turn at $v$ is treated analogously by symmetry. See Figure \ref{fig:rectconst} for an illustration of how $Z'$ and $Z''$ are obtained.

\begin{figure}[h!]
    \centering
    \includegraphics[width=0.8\textwidth]{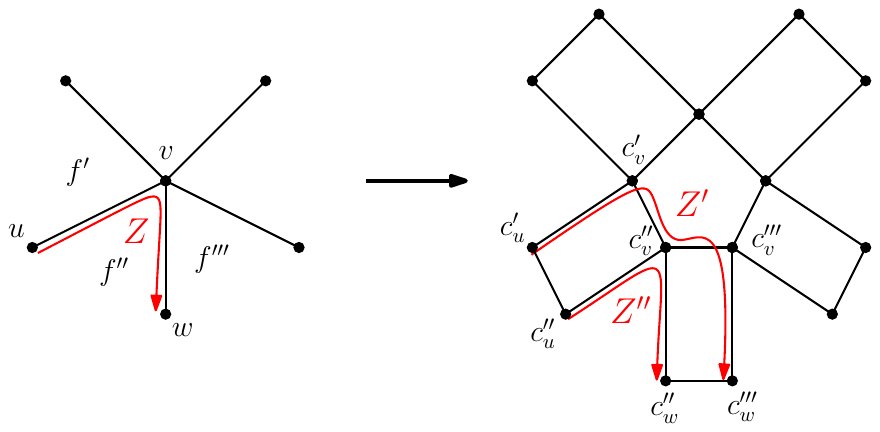}
    \caption{\label{fig:rectconst} A zig-zag path $Z$ in $\Gamma$ gives rise to two zig-zag paths $Z'$ and $Z''$ in the corner graph $C_{\Gamma}$.}
\end{figure}

\begin{prop} \label{homologies C_Gamma}
    Let $\Gamma$ be a torus graph. Then the maps $\phi^\ell$ and $\phi^r$ are injective, their images are disjoint, and $\phi^\ell(Z(\Gamma)) \cup \phi^r(Z(\Gamma)) = Z(C_\Gamma)$. Moreover, for all $Z \in Z(\Gamma)$, we have
    \[[Z] = [\phi^\ell(Z)] = [\phi^r(Z)].\]
\end{prop}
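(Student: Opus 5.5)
The proof will follow the scheme of Proposition \ref{homologies_gamma_delta}. First I will describe the local structure of zig-zag paths in $C_\Gamma$, and then read off injectivity, disjointness, surjectivity and the homology statement from it.

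\textbf{Local structure of $C_\Gamma$.} Every vertex $c=(v,f)$ of $C_\Gamma$ has degree $4$. Its four neighbours in cyclic order are the previous corner around $v$, the next corner around $f$, the next corner around $v$, and the previous corner around $f$. Hence the edges of $C_\Gamma$ split into two types:
\begin{itemize}
\item \emph{vertex edges} $\overrightarrow{(v,f)(v,f')}$, where $f,f'$ are consecutive faces around $v$;
\item \emph{face edges} $\overrightarrow{(u,f)(v,f)}$, where $uv$ is an edge of $\Gamma$ on the boundary of $f$. These are exactly the images of $\tau^\ell$ and $\tau^r$.
\end{itemize}
The faces of $C_\Gamma$ correspond to the vertices of $\Gamma$ (small faces surrounded by vertex edges), the faces of $\Gamma$ (surrounded by face edges), and the edges of $\Gamma$ (quadrilaterals $c'_u c'_v c''_v c''_u$).

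At a $4$-valent vertex, turning maximally left or right means leaving by the neighbour adjacent in the cyclic order to the incoming edge. I will check case by case that a zig-zag path arriving along a face edge $\tau(\overrightarrow{uv})$ at $(v,f)$ does one of two things. Either it continues along the next face edge around $f$, which corresponds to $Z$ turning at $v$ in the direction that keeps $f$ on the same side. Or it goes through one or two vertex edges and then onto a face edge of the adjacent face, exactly as in the construction of $\phi^\ell$. In either case, the sequence of face edges a zig-zag path of $C_\Gamma$ visits, projected by $(\tau^\ell)^{-1}$ or $(\tau^r)^{-1}$, is a path in $\Gamma$ alternating left and right turns. So it is a zig-zag path $Z$ of $\Gamma$.

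The key invariant I will prove is the side on which the projected edge sees the corner's face. It is constant along the whole path: a left-type path stays left-type and a right-type path stays right-type. This is the step I expect to be the main obstacle, and I plan to handle it by tracking the turn parity at each vertex of $C_\Gamma$. The point to verify is that the alternation of the zig-zag in $C_\Gamma$ forces the inserted vertex edges to come in the pattern described (two inserted edges after a right turn for $Z'$, none for $Z''$). It should also send the path back onto the same side relative to the next edge $vw$ of $Z$.

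\textbf{Consequences.} Given the invariant, every zig-zag path of $C_\Gamma$ contains a face edge, since a path using only vertex edges would circle a small face, which a zig-zag cannot do at degree $4$ except in degenerate cases I will rule out. It therefore equals $\phi^\ell(Z)$ or $\phi^r(Z)$ for the projected $Z$, which gives surjectivity onto $\mathcal{Z}(C_\Gamma)$. Injectivity holds because $Z$ is recovered from its face edges via $(\tau^\ell)^{-1}$ or $(\tau^r)^{-1}$, and the zig-zag through a given oriented edge is unique. Disjointness of the images holds because $\tau^\ell(e)\neq\tau^r(e')$ unless the edges lie on the same face with opposite sides, and the side invariant rules this out.

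\textbf{Homology.} Finally, $\phi^\ell(Z)$ stays in a small neighbourhood of $Z$: each face edge $\tau(e)$ is a parallel copy of $e$, and the inserted vertex edges stay near the vertex $v$. Hence $\phi^\ell(Z)$ is homotopic to $Z$ on the torus, and likewise $\phi^r(Z)$. This gives $[Z]=[\phi^\ell(Z)]=[\phi^r(Z)]$.
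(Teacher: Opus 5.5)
Your route is the paper's: you recover a zig-zag path of $\Gamma$ from the face edges of a zig-zag path of $C_\Gamma$ via $(\tau^\ell)^{-1}$ and $(\tau^r)^{-1}$, and you get the homology statement from the fact that $\phi^\ell(Z)$ and $\phi^r(Z)$ stay in a thin neighbourhood of $Z$. You also correctly see that disjointness of the images of $\tau^\ell$ and $\tau^r$ is not enough for disjointness of the images of $\phi^\ell$ and $\phi^r$. One also needs that a single zig-zag path of $C_\Gamma$ never mixes $\tau^\ell$-edges and $\tau^r$-edges. The paper's one-line argument passes over this.

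The gap is that the local model you would use to prove this invariant is wrong. The cyclic order at $(v,f)$ is not vertex, face, vertex, face. The four faces of $C_\Gamma$ meeting at $(v,f)$ are, in order, the polygon of $f$, the quadrilateral of one edge of $\Gamma$ at $v$, the polygon of $v$, and the quadrilateral of the other edge. So the two face edges are cyclically adjacent (they bound the polygon of $f$), and so are the two vertex edges. The alternating order you state contradicts your own list of the faces of $C_\Gamma$. It also contradicts your first case, since under it a path arriving on a face edge could never continue on a face edge. A case analysis carried out with that order would fail immediately.

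With the correct order, the step you leave as a plan is a short check. Arrive at $(v,f)$ along $\tau^r(\overrightarrow{uv})$, so $f$ is to the right of $\overrightarrow{uv}$.
\begin{itemize}
\item The maximal right turn continues along $f$ to $\tau^r(\overrightarrow{vw})$, where $u\to v\to w$ is the maximal right turn of $\Gamma$ at $v$.
\item The maximal left turn crosses $uv$ into the polygon of $v$. Traversed in this direction, continuing around that polygon is always a right turn, and leaving it onto a face edge is always a left turn.
\item Alternation therefore forces exactly two vertex edges, not ``one or two''. After one edge the exit would backtrack along $vu$. After two, the path leaves along $\tau^r(\overrightarrow{vx})$ with $u,x$ consecutive around $v$, i.e.\ where $\Gamma$ turns maximally left.
\end{itemize}
So a right turn of $\Gamma$ becomes the single turn R in $C_\Gamma$, and a left turn becomes L\,R\,L. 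Hence alternation in $C_\Gamma$ is equivalent to alternation in $\Gamma$, and the type $r$ is preserved; the type $\ell$ is handled by mirror symmetry. No zig-zag path can stay on vertex edges, since that would repeat the same turn.

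One further correction: in your injectivity step, the zig-zag path through a given oriented edge is not unique. There are two, distinguished by the direction of the next turn. Injectivity instead follows from recovering the whole sequence of oriented edges of $Z$.
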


\begin{proof}
    First, we check that the images of $\phi^\ell$ and $\phi^r$ are injective and disjoint. This clearly holds, as the images of the maps $\tau^\ell$ and $\tau^r$ are also injective and disjoint. Moreover, notice that each zig-zag path in $Z(C_\Gamma)$ can be sent back to the zig-zag path in $Z(\Gamma)$ by merging all the rectangles in $C_\Gamma$ into edges of $\Gamma$. Therefore, $\phi^\ell(Z(\Gamma)) \cup \phi^r(Z(\Gamma)) = Z(C_\Gamma)$.

    Finally, we can observe that this inverse map does not change the homology class of the zig-zag path, which completes the proof.
\end{proof}

\begin{lem} \label{sp forest to span tree}
    Let $G$ be a connected graph. Any spanning forest $F$ of $G$ can be extended to a spanning tree of $G$.
\end{lem}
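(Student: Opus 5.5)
The plan is the standard greedy argument. Start from the given spanning forest $F$. If $F$ is already connected, then it is a tree containing every vertex of $G$, and we are done. Otherwise $F$ has at least two components, and we add edges of $G$ one at a time, each time preserving acyclicity while lowering the number of components by one.

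The single step goes as follows. Suppose $F$ has components $T_1,\dots,T_m$ with $m\ge 2$, and let $U=V(T_1)$, a nonempty proper subset of $V(G)$. Because $G$ is connected, some edge $e=xy$ of $G$ has $x\in U$ and $y\notin U$. To see this, take any path in $G$ from a vertex of $U$ to a vertex outside $U$; at some point it must leave $U$. Then $y$ lies in a different component $T_j$. Adding $e$ to $F$ gives a forest: any cycle through $e$ would also need a path in $F$ from $y$ back to $x$, and no such path exists because $x$ and $y$ lie in different components of $F$. The new forest still spans $V(G)$ and has exactly $m-1$ components, since $T_1$ and $T_j$ have merged.

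Iterating this step, the process ends after $m-1$ additions because the number of components strictly decreases. The result is an acyclic, connected spanning subgraph of $G$ containing $F$, that is, a spanning tree extending $F$.

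Nothing here is a real obstacle. The only point needing care is the existence of the crossing edge, which is exactly where connectivity of $G$ is used. If $G$ is infinite, one would instead use Zorn's lemma on acyclic edge sets containing $F$: a maximal element is connected by the same crossing-edge argument. Finiteness suffices here, since the paper works with finite torus graphs.
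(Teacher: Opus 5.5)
Your proof is correct and follows the same greedy argument as the paper: use connectivity of $G$ to find an edge joining two distinct components of $F$, add it to merge them, and repeat until one tree remains. You also spell out why adding the crossing edge keeps the subgraph acyclic, which the paper leaves implicit.
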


\begin{proof}
    Suppose $F$ consists of $k \ge 2$ disjoint trees (if $k = 1$, we are done). Because $G$ is connected, for any two distinct components of $F$, there exists a path in $G$ connecting a vertex in one component to a vertex in the other. Along this path, there must be an edge whose endpoints lie in two different components of $F$. This edge cannot belong to $F$, so it lies in the complement of $F$.

    Adding this edge to $F$ merges two components into one, producing a spanning forest with $k-1$ trees. Repeating this procedure $k-1$ times, we obtain a spanning tree of $G$.
\end{proof}

\begin{lem} \label{loops without double edges on corner graphs}
    Let $\Gamma$ be a torus graph. Let $\gamma \in \mathcal{L}(\Gamma)$ such that $[\gamma] = (a,b) \neq (0,0)$ is realizable. Then there exists $\gamma_c \in \mathcal{L}(C_\Gamma)$ such that $[\gamma_c] = (2a, 2b)$ is realizable, and $\gamma_c$ contains no double edges.
\end{lem}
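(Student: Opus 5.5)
The plan is to build $\gamma_c$ directly from $\gamma$ by replacing each component of $\gamma$ with the boundary of a thin tubular neighbourhood of that component, drawn in $C_\Gamma$. First recall the local structure of $C_\Gamma$. For every vertex $v$ of $\Gamma$, the corners $(v,f)$ form a cycle $C_v$ of length $\deg(v)$, made of the edges joining consecutive corners around $v$. For every face $f$, the corners $(v,f)$ form a cycle $C_f$. Each corner lies on exactly one vertex-cycle and one face-cycle. Each edge $uv$ of $\Gamma$, with adjacent faces $f'$ and $f''$, corresponds to a $4$-cycle $c'_u c'_v c''_v c''_u$: the sides $c'_uc'_v$ and $c''_uc''_v$ belong to face-cycles, and the sides $c'_uc''_u$ and $c''_vc''_u$-type sides belong to vertex-cycles. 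Since $\gamma$ visits every vertex of $\Gamma$ exactly once, the corner sets $\bigcup_{v\in \kappa} V(C_v)$, taken over the components $\kappa$ of $\gamma$, partition $V(C_\Gamma)$. It therefore suffices to cover the corners of each component separately, by disjoint simple loops without double edges.

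\emph{Non-trivial loops.} Let $L=u_1u_2\cdots u_m$ be an oriented non-trivial loop of $\gamma$; by realizability every such loop has the same class $(i,j)$. Define the left boundary walk $L^\ell$ as follows. Use the edge $\tau^\ell(\overrightarrow{u_su_{s+1}})=\overrightarrow{c'_{u_s}c'_{u_{s+1}}}$ along the left face. Then, at $u_{s+1}$, walk around the vertex-cycle $C_{u_{s+1}}$ through all corners lying on the left of $L$, until reaching the left corner of the outgoing edge $u_{s+1}u_{s+2}$. The right boundary walk $L^r$ is defined symmetrically, using $\tau^r$ and the corners on the right side. The left and right corners at each $u_s$ together exhaust $V(C_{u_s})$, so $L^\ell\cup L^r$ covers all corners of the vertices of $L$. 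Both walks are isotopic to $L$ (they are the two boundary curves of an annular neighbourhood of $L$), hence both have homology class $[L]$. Each is simple, because the corners it uses are distinct. Each has length at least $m\ge 2$ plus the corners picked up around vertices, and I will argue it has length at least $3$ and is therefore not a double edge. The point needing care is the degenerate case where $L$ has length $2$ with both passages through the same pair of faces; there I will check that the walk still picks up extra corners, or else is an honest $2$-cycle through two distinct edges of $C_\Gamma$.

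\emph{Trivial components.} A trivial loop $C$ of $\gamma$ is replaced in the same way by its two boundary walks $C^\ell$ and $C^r$, both topologically trivial. A double edge $uv$ of $\gamma$ is replaced by the single closed walk around the $4$-cycle of the edge $uv$. This walk goes from $c'_u$ to $c'_v$, around $C_v$ through all remaining corners of $v$ to $c''_v$, back to $c''_u$, and around $C_u$ to $c'_u$. It is a simple cycle of length $\deg u+\deg v\ge 3$ (using $\deg\ge 1$ and $u\ne v$, since there are no self-loops) through exactly the corners of $u$ and $v$.

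\emph{Conclusion.} Collecting all of these loops gives $\gamma_c\in\mathcal L(C_\Gamma)$ with no double edges. Its non-trivial loops are exactly $L^\ell$ and $L^r$ for the non-trivial loops $L$ of $\gamma$. They all have the same class $(i,j)$, so $(2a,2b)=[\gamma_c]$ is realizable. The main obstacle I expect is rigorously verifying that the boundary walks are genuine simple cycles in $C_\Gamma$ when faces or vertex-cycles are short. For instance, a face of degree $2$ gives a multi-edge in $C_\Gamma$, and when $L$ turns at a vertex of degree $2$ the set of left corners may be empty. I would handle this by a local case check at each vertex of $L$, distinguishing the number of corners on each side, possibly simplifying by first subdividing via the torus-graph hypothesis that all faces are disks.
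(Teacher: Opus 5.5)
Your walks $L^\ell$ and $L^r$ for the non-trivial loops are fine. They are exactly the paper's contour curves in the special case where nothing hangs off $L$. The degree-$2$ worry also does not arise: the two edges of $L$ at a vertex always leave at least one corner on each side.

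The gap is in the trivial components. For a double edge $uv$ of $\gamma$ you create a topologically trivial cycle of length $\deg u+\deg v$. For a trivial loop of $\gamma$ you create two trivial boundary walks, whose lengths are the numbers of corners on each side. None of these parities is controlled; already $\deg u=3$, $\deg v=4$ gives a trivial $7$-cycle.

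By Lemma~\ref{char_poly_properties}~(b), a collection containing an odd trivial loop has zero net contribution. The notion of realizability the paper actually uses requires the trivial loops of the witness to be even: the proof of Lemma~\ref{nonvanishing_coeffs} begins by replacing them with double edges. So your $\gamma_c$ cannot serve as the witness needed for $2N(\Gamma)\subseteq N(C_\Gamma)$, which is the purpose of this lemma. A symptom is that your argument never uses $(a,b)\neq(0,0)$. Merely avoiding double edges is easy; the real task is to cover the corners of the double-edge vertices without creating any trivial loop.

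The missing idea is to absorb these vertices into the contours of the non-trivial loops.
\begin{enumerate}
\item Reduce to the case where all trivial loops of $\gamma$ are double edges. These double edges then form a perfect matching of each cylindrical region $\Gamma_i$ between consecutive non-trivial loops $\gamma_i,\gamma_{i+1}$.
\item Extend this matching to a spanning forest of $\Gamma_i$ (Lemma~\ref{sp forest to span tree}). Attach each tree by a single edge to $\gamma_i$ or $\gamma_{i+1}$, which connectivity of $\Gamma$ allows.
\item This yields a cycle-rooted spanning forest of $\Gamma$ whose cycles are exactly the non-trivial loops of $\gamma$.
\item Take the two contour curves of each cycle-rooted tree, i.e.\ your $L^\ell$ and $L^r$ with detours around the hanging branches. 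They are simple loops in $C_\Gamma$ of class $[\gamma_i]$, and together they visit every corner exactly once.
\end{enumerate}
Thus $\gamma_c$ consists only of $2k$ non-trivial loops of a common class. There are no trivial loops whose parity could matter, and $[\gamma_c]=(2a,2b)$.
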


\begin{proof}
    Without loss of generality, suppose that the only trivial loops of $\gamma$ are double edges. Suppose that $\gamma$ contains $k \geq 1$ non-trivial loops in $\gamma$, denoted by $\{\gamma_1, \dots, \gamma_k\}$. Removing all the vertices and edges of all the non-trivial loops from $\Gamma$ decomposes the fundamental domain into $k$ cylinder graphs, which we denote by $\Gamma_i$ for $1 \leq i \leq k$, where $\Gamma_i$ lies between $\gamma_i$ and $\gamma_{i+1}$. The double edges in $\gamma$ induce a perfect matching $M_i$ for each $\Gamma_i$. By Lemma \ref{sp forest to span tree}, we can obtain a spanning tree from $M_i$ for each connected component of $\Gamma_i$. Consequently, this yields a spanning forest $F_i$ of $\Gamma_i$.

    From the spanning forests $F_i$ and non-trivial loops $\gamma_i$, we want to construct a cycle-rooted spanning forest, whose cycles are precisely the non-trivial loops $\gamma_i$.
    Since $\Gamma$ is a torus graph, it is in particular connected. Similarly to Lemma \ref{sp forest to span tree}, for each connected component of $F_i$, there exists an edge connecting it to a vertex on either $\gamma_i$ or $\gamma_{i+1}$ (if both are possible, we choose only one arbitrarily). Since $\Gamma$ is connected, we can repeat this until we obtain two cycle-rooted trees that visit all the vertices of $\Gamma_i$. Repeating this construction for every $i \in \{1, \dots, k\}$ yields the desired cycle-rooted spanning forest. See the upper part of Figure \ref{fig:spanning_forest_double_loops} for an example of how a cycle-rooted spanning forest can be obtained from $\gamma$.

    Finally, to each cycle-rooted tree containing $\gamma_i$ in our construction, we can associate two non-trivial loops in $C_{\Gamma}$ with the same homology as the non-trivial loop $\gamma_i$. These two loops follow the contour around the cycle-rooted tree. See Figure \ref{fig:spanning_forest_double_loops} for an illustration.
    
    More precisely, let $\mathcal{T}_i$ be the cycle-rooted tree containing $\gamma_i$. We partition its edges into $E^l_i$ and $E^r_i$, where $E^l_i$ (resp. $E^r_i$) are the edges appearing between the loops $\gamma_{i-1}$ and $\gamma_i$ (resp. $\gamma_i$ and $\gamma_{i+1}$). We handle the edge in $E^l_i$ first.
    For all $uv \in E^\ell_i$, we choose the edges $\tau^\ell(\overrightarrow{uv}) = \overrightarrow{c'_u c'_v}$ and $\tau^\ell(\overrightarrow{vu}) = \overrightarrow{c''_v c''_u}$ to be contained in the loop on $C_\Gamma$. \footnote{This is the same map as defined earlier in the section.} In order to extend these edges into a loop, we add the necessary edges that arise from the corners associated with the same vertex. We start with a corner that contains an ingoing edge and connect it with the first corner appearing in the clockwise direction around a polygon that replaces the given vertex.
\end{proof}

\begin{figure}[h!]
    \centering
    \includegraphics[width=0.9\textwidth]{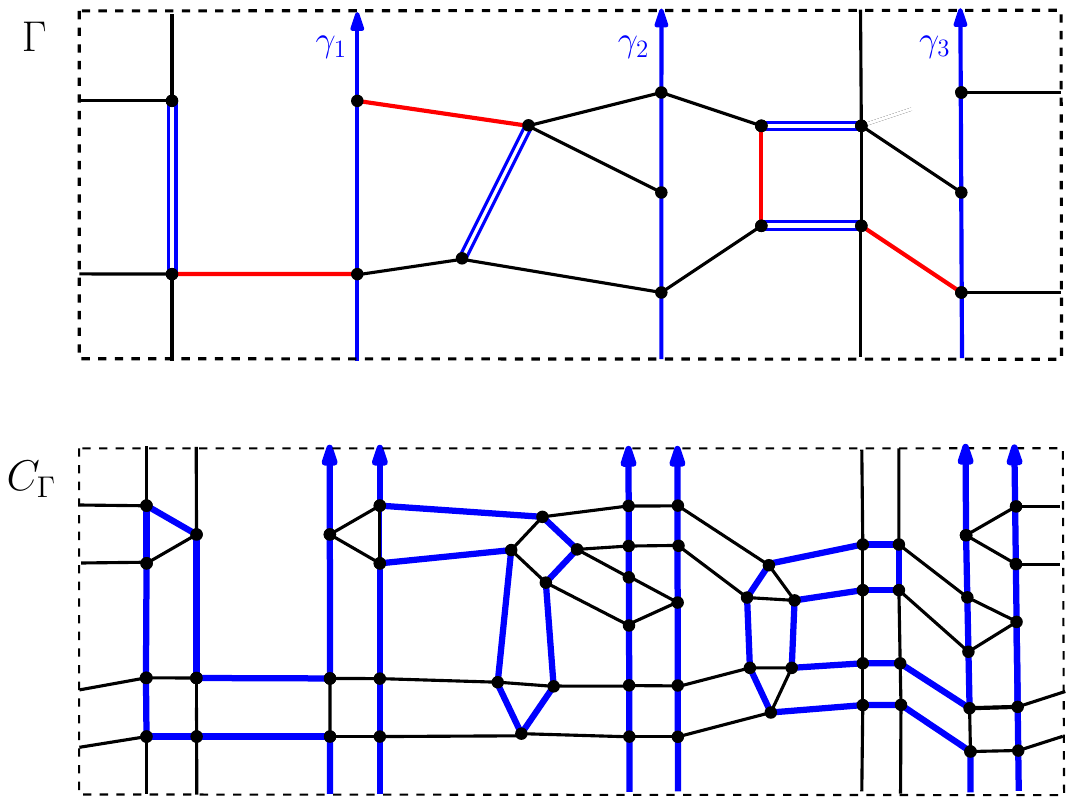}
    \caption{\label{fig:spanning_forest_double_loops} Up: a collection of loops $\gamma$ in blue with homology $(a,b)$, extended with red edges into a cycle-rooted spanning forest. Down: the corresponding collection of loops $\gamma_c$ without double edges and homology $(2a,2b)$.}
\end{figure}

\subsection{Newton polygons of corner graphs}

We now describe the relationship between the Newton polygons of a torus graph $\Gamma$ and its corner graph $C_\Gamma$.

\begin{lem} \label{2N Gamma < N C_Gamma}
    Let $\Gamma$ be a torus graph. Then
    \[2N(\Gamma) \subseteq N(C_{\Gamma}).\] 
\end{lem}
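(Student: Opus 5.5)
Since both Newton polygons are convex, it suffices to show that for every extremal point $(I,J)$ of $N(\Gamma)$ the point $(2I,2J)$ gives a non-vanishing monomial of the characteristic polynomial of $C_\Gamma$; then $2N(\Gamma)=\mathrm{conv}\{(2I,2J)\}\subseteq N(C_\Gamma)$. The plan is to transfer realizable homology classes from $\Gamma$ to $C_\Gamma$ via Lemma \ref{loops without double edges on corner graphs}, and then conclude with Lemma \ref{nonvanishing_coeffs}, in the same spirit as the proof of Lemma \ref{N(Gamma) < N(Gamma_Delta_v)}.

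First I would treat the case $(I,J)=(0,0)$. This case is only relevant when $N(\Gamma)$ is degenerate, i.e.\ when the origin is extremal, and it is the point where one can actually rely on symmetry. Since $\Gamma$ has a perfect matching, $(0,0)$ is realized by double edges only. For $C_\Gamma$, one needs a collection of double edges covering it, i.e.\ a perfect matching of $C_\Gamma$. Such a matching exists because each corner $(v,f)$ can be matched to its next corner around $v$ in a consistent way. More simply, the $4$-valent corner graph is the medial-type graph whose faces around each vertex $v$ form an even or odd cycle, so instead I would match corners in pairs along the edges of $\Gamma$: pair $(u,f)$ with $(v,f)$ for a suitable edge choice. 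Alternatively I would simply invoke that $(0,0)$ always lies in $N(C_\Gamma)$ by central symmetry, Lemma \ref{char_poly_properties} (d), once $N(C_\Gamma)$ is nonempty. In this case I would only need that $P_{C_\Gamma}\not\equiv 0$, which again follows from a realizable class.

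Next, for an extremal $(I,J)\neq(0,0)$ of $N(\Gamma)$, Lemma \ref{ext_homology} gives a $\gamma\in\mathcal L(\Gamma)$ with $[\gamma]=(I,J)$ realizable. Lemma \ref{loops without double edges on corner graphs} then produces $\gamma_c\in\mathcal L(C_\Gamma)$ with $[\gamma_c]=(2I,2J)$ realizable, so by Lemma \ref{nonvanishing_coeffs} the coefficient of $z^{2I}w^{2J}$ in $P_{C_\Gamma}$ is a nonzero polynomial in the edge weights. To pass from generic weights to the given positive weights, I would argue, as the paper implicitly does throughout, that the Newton polygon is taken for generic weights, or I would check that the contribution is a single signed monomial. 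I would prefer the former reading, consistent with Lemma \ref{N(Gamma) < N(Gamma_Delta_v)}.

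The main obstacle is the degenerate case together with the existence of the needed configurations. If $N(\Gamma)$ has nonzero extremal points, convexity plus the central symmetry of $N(C_\Gamma)$ handles $(0,0)$ automatically, and the inclusion follows from the previous step. I expect the genuinely delicate input to be Lemma \ref{loops without double edges on corner graphs} itself, which is assumed here. The remaining work is bookkeeping, namely that doubling every extremal point and taking the convex hull gives exactly $2N(\Gamma)$.
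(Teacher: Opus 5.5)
Your main line is the paper's proof. At each extremal point of $N(\Gamma)$, Lemma~\ref{ext_homology} gives a realizable class. Lemma~\ref{loops without double edges on corner graphs} turns it into the realizable class $(2I,2J)$ of $C_\Gamma$, and Lemma~\ref{nonvanishing_coeffs} makes the coefficient of $z^{2I}w^{2J}$ a nonzero polynomial in the weights; convex hulls then finish the argument.

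As in the paper, this proves the inclusion for generic weights, and you really do need that reading. Extremal coefficients of non-bipartite graphs can cancel at special positive weights: the coefficient $ab(1-c^2)$ of $z$ for $F_{1,1}$ vanishes at $c=1$. So your alternative of checking for a single signed monomial cannot be expected to work.

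The gap is in your treatment of the case where $(0,0)$ is extremal. By Lemma~\ref{char_poly_properties}~(d), this happens only when $N(\Gamma)=\{(0,0)\}$. You are right that Lemma~\ref{loops without double edges on corner graphs} assumes $(a,b)\neq(0,0)$, and the paper's argument silently skips this case. However, none of your three suggestions settles it:
\begin{itemize}
\item Matching each corner to the next corner around $v$ is a perfect matching of the vertex cycles, so it needs every vertex of $\Gamma$ to have even degree.
\item Pairing $(u,f)$ with $(v,f)$ along edges is a perfect matching of the face cycles, so it needs every face to have even degree. Both conditions fail, e.g., for $F_{1,1}$, which is $3$-valent and has triangular faces.
\item Invoking the central symmetry of $N(C_\Gamma)$ presupposes $P_{C_\Gamma}\not\equiv 0$. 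In this case that is exactly what has to be shown, and you have not exhibited a realizable class of $C_\Gamma$ that would give it.
\end{itemize}

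A correct fix is to take any spanning tree $T$ of $\Gamma$ and walk along the boundary of a thin neighborhood of $T$. Along each side of a tree edge $uv$, the walk goes from $(u,f)$ to $(v,f)$, which is a face-cycle edge of $C_\Gamma$. Across each non-tree edge at $v$, it goes to the next corner around $v$, which is a vertex-cycle edge. Since the neighborhood of a tree is a disk, this walk is a single topologically trivial cycle passing through every corner exactly once. Its length is the number of corners, $2|E(\Gamma)|$, which is even. Hence $(0,0)$ is realizable for $C_\Gamma$; equivalently, alternate edges of this cycle give a perfect matching of $C_\Gamma$. Lemma~\ref{nonvanishing_coeffs} then gives a nonvanishing constant term, i.e.\ $2N(\Gamma)=\{(0,0)\}\subseteq N(C_\Gamma)$.
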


\begin{proof}
    We take $(\alpha, \beta) \in \mathbb{Z}^2$ to be an extremal point of $N(\Gamma)$. Let $\gamma \in \mathcal{L}(\Gamma)$, such that $(\alpha, \beta)$ is its homology class, which by Lemma \ref{ext_homology} is realizable.
    
    Applying further Lemma \ref{loops without double edges on corner graphs}, we can obtain a loop $\gamma_c \in \mathcal{L}(C_\Gamma)$, with a realizable homology class $(2 \alpha, 2 \beta)$. By Lemma \ref{nonvanishing_coeffs}, we get that the coefficient of the monomial $z^{2\alpha}w^{2\beta}$ does not vanish in the characteristic polynomial of $C_\Gamma$. 
    Therefore, $(2\alpha, 2\beta) \in N(C_\Gamma)$. By repeating the same argument at all the extremal points, the conclusion follows.
\end{proof}

We proceed to prove the opposite direction for a more restricted family of torus graphs. We start with an elementary topological observation.

\begin{obs} \label{at most 2 loops}
    Let $\gamma \in \mathcal{L}(C_\Gamma)$. Let $v \in V(\Gamma)$ be a vertex of degree $d$ and $\{c_1, \dots, c_d \} \subseteq V(C_\Gamma)$ be the $4$-valent vertices that replace the vertex $v$ in the construction of the corner graph. Then in $\gamma$, there can be at most two different non-trivial loops visiting the corners $\{c_1, \dots, c_d \}$, otherwise, they must intersect each other, which is not allowed for the collections of loops in $\mathcal{L}(C_\Gamma)$.
\end{obs}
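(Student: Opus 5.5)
The statement to be proved is Observation \ref{at most 2 loops}: around a vertex $v$ of $\Gamma$, at most two distinct non-trivial loops of $\gamma$ can visit the corners $c_1,\dots,c_d$.

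The plan is a planar topological argument carried out locally around $v$. In $C_\Gamma$, the corners $c_1,\dots,c_d$ form a $d$-cycle $R_v$, the polygon replacing $v$, ordered cyclically around $v$. Each $c_i$ has two further neighbours: the corners at the other endpoints of the two edges of $\Gamma$ incident to $v$ that bound the face of $c_i$. Thus each $c_i$ has exactly two edges leaving $R_v$ and two edges inside $R_v$.

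First I will describe how a loop of $\gamma$ traverses $R_v$. It enters $R_v$ at some $c_i$ through an outside edge, walks along a (possibly empty) arc of the cycle $R_v$, and exits through an outside edge at some $c_j$. Since the loops of $\gamma$ are vertex-disjoint, the arcs used by different loops, or by different passages of the same loop, are disjoint sub-arcs of $R_v$. Each passage, thickened slightly, is a curve that cuts across the disk bounded by $R_v$ together with a small collar around it.

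Next I will argue, as in the statement, that a non-trivial loop cannot hug $R_v$ on only one side. Take two passages $p$ and $q$ belonging to distinct non-trivial loops $L_1$ and $L_2$. Because $L_1$ and $L_2$ are disjoint simple closed curves on the torus, and by Lemma \ref{char_poly_properties}(c) they are parallel non-trivial curves, they cut the torus into annuli. Locally near $v$, each passage of a non-trivial loop separates the small disk $D_v$ around $R_v$ into two sides. Parallel disjoint curves crossing $D_v$ produce chords of $D_v$ that are pairwise non-crossing.

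The key counting step then uses the degree structure. The corners of $R_v$ not covered by a passage must still be covered by $\gamma$. They can only be covered by passages themselves, by trivial loops, or by double edges, and trivial loops or double edges sitting between two chords are allowed. Here lies the main obstacle: merely non-crossing chords do not by themselves bound the number of loops by two. I would therefore try to use that every corner $c_i$ of $R_v$ is adjacent to $c_{i-1}$ and $c_{i+1}$ inside $R_v$. A third loop passing strictly between two others through $R_v$ must enter and exit at corners lying strictly inside an arc. The outside edges of those corners lead into faces of $\Gamma$ around $v$, namely into single face-corners, which are themselves bounded by the two outer loops. This forces the middle loop to be confined to a disk, contradicting non-triviality. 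Concretely, I will show that the region between the outer chords, intersected with the faces adjacent to $v$, is a topological disk in the torus, which is where I expect the real work to be. If that argument fails, I would fall back on counting: in the universal cover, only two loops can separate the lift of $v$ from infinity in the required way.
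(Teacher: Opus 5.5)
Your proposal has a genuine gap at exactly the step you flag as ``the real work'', and the mechanism you propose there cannot succeed.

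No argument local to $R_v$ can bound the number of loops by two. In the hexagonal corner graph, $R_v$ is a triangle $c_1c_2c_3$. Three distinct loops can each pass through a single corner $c_i$ along the hexagon of the face $f_i$, entering and leaving through the two outside edges of $c_i$. These three passages are vertex-disjoint and pairwise non-crossing, so neither the adjacencies $c_i\sim c_{i\pm1}$ nor the face-corner structure excludes them.

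Your claim that the ``middle'' loop is confined to a disk is also false. Inside the small disk $D_v$ the region cut out by the outer chords is of course a disk, but the middle loop simply leaves $D_v$. Globally, the region between two disjoint parallel non-trivial loops is an annulus, which easily accommodates further parallel non-trivial loops (think of three horizontal circles passing close to one point). If you instead meant three nested chords, that case is vacuous: no complementary region of three nested chords in $D_v$ is adjacent to all three, so they could not all touch the face bounded by $R_v$. The fallback does not make sense as stated either, since lifts of non-trivial loops to the universal cover are bi-infinite curves and do not separate anything from infinity.

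The missing idea is global and short, and it uses only ingredients you already mention.
\begin{itemize}
\item The open $d$-gonal face $R_v^\circ$ of $C_\Gamma$ bounded by $R_v$ is connected and disjoint from every loop of $\gamma$. Hence it lies in a single component $A$ of the complement in $\mathbb{T}$ of the non-trivial loops.
\item These loops are pairwise disjoint, non-contractible and parallel (Lemma~\ref{char_poly_properties}(c)). So every such component is an open annulus whose closure meets at most two of them, namely its boundary curves.
\item Any loop visiting some $c_i$ meets $\overline{R_v^\circ}\subseteq\overline{A}$, and is therefore one of these at most two loops.
\end{itemize}
This is the precise content behind the paper's one-line justification that the loops ``must intersect each other''. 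It uses nothing about corner graphs beyond the fact that $c_1,\dots,c_d$ lie on the boundary of a single face.
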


\begin{lem} \label{N(C_Gamma) < 2 N(Gamma)}
    Let $\Gamma$ be an isoradial bipartite graph. Then
    \[N(C_{\Gamma}) \subseteq 2 N(\Gamma).\] 
\end{lem}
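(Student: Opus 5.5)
We prove $N(C_\Gamma) \subseteq 2N(\Gamma)$ by the same support-line argument as in Lemma \ref{N_delta < N}: we bound the first coordinate of every monomial of the characteristic polynomial of $C_\Gamma$ after normalising one side of $N(\Gamma)$ to be vertical.

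\textbf{Setup.} Fix a side of $N(\Gamma)$. By Remark \ref{sl2_NP}, apply an $SL_2(\mathbb Z)$ transformation so that this side is vertical and $N(\Gamma)$ lies to its left. By Corollary \ref{NPs and homology classes}, there is a zig-zag path $Z_1$ of $\Gamma$ with homology $(0,1)$. By Lemma \ref{vertical zig-zag path} and Lemma \ref{fundamental_domain matching}, we can choose the fundamental domain so that its vertical boundary crosses exactly the $k$ edges of $Z_1$ and no other edges of $\Gamma$. Since $N(\Gamma)$ is centrally symmetric, its maximal first coordinate is $k/2$. It therefore suffices to show that every $\gamma\in\mathcal L(C_\Gamma)$ with non-zero contribution has first homology coordinate at most $k$.

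\textbf{Realising the boundary in $C_\Gamma$.} Draw $C_\Gamma$ with each vertex of $\Gamma$ replaced by a small polygon of corners and each edge $uv$ replaced by the two parallel edges $\tau^\ell(\overrightarrow{uv})$ and $\tau^r(\overrightarrow{uv})$. The vertical boundary then crosses exactly $2k$ edges of $C_\Gamma$: the two parallel copies of each edge of $Z_1$. Equivalently, by Proposition \ref{homologies C_Gamma}, these are the edges of $\phi^\ell(Z_1)$ and $\phi^r(Z_1)$.

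\textbf{Bounding the crossing number.} A loop collection $\gamma$ is a fixed-point-free permutation. Double edges contribute nothing to the homology. By Lemma \ref{char_poly_properties}(c), all non-trivial loops of $\gamma$ have parallel homology, and by the argument of Lemma \ref{ext_homology} we may assume they all cross the boundary from left to right. The first coordinate of $[\gamma]$ is then the number of crossing edges used left-to-right, minus those used right-to-left. Since each vertex of $C_\Gamma$ has a unique outgoing edge in $\gamma$, this count is at most the size of a family of crossing edges with distinct tails, and also with distinct heads.

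The crossing edges form a ladder strip. For each edge $uv$ of $Z_1$, its two copies have tails at the two corners of $u$ bordering that edge. In the isoradial bipartite setting, consecutive edges of $Z_1$ share a vertex and a face, so consecutive copies share corners. The main step is to show that along this cyclic strip of $2k$ crossing edges, one cannot use two crossing edges that share a corner, and that sharing is forced often enough to leave at most $k$ usable edges. The naive bound from distinct tails is too weak; I would instead exploit the following two facts.

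1. The four corners at the ends of the two copies of an edge $uv$ of $Z_1$ lie on a $4$-cycle (a "rectangle" of $C_\Gamma$). A loop in $\gamma$ cannot use both parallel copies in the same direction without forcing odd trivial loops or crossings.
2. Observation \ref{at most 2 loops} limits how many non-trivial loops pass through the corners of a single vertex of $\Gamma$.

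Together these should allow, on average, one left-to-right crossing per edge of $Z_1$, giving first coordinate at most $k = 2\cdot(k/2)$.

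\textbf{Main obstacle.} The expected difficulty is precisely this counting: proving that a $4$-cycle rectangle together with its neighbouring corners contributes at most one net crossing. The first approach is a local case analysis at each vertex of $Z_1$. Because $Z_1$ alternates maximal left and right turns, at each vertex the two corners adjacent to $Z_1$ are consecutive around that vertex. Combined with Observation \ref{at most 2 loops}, this should give an injection from the crossing edges used by $\gamma$ into the edges of $Z_1$.

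\textbf{Conclusion.} Once the bound is established for one side, repeat it for every side of $N(\Gamma)$ and undo each $SL_2(\mathbb Z)$ transformation, exactly as in Lemma \ref{N_delta < N}. This places $N(C_\Gamma)$ inside every half-plane defining $2N(\Gamma)$, so $N(C_\Gamma)\subseteq 2N(\Gamma)$.
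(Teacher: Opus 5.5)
\textbf{Verdict: there is a genuine gap.} Your reduction matches the paper's. You make a side of $N(\Gamma)$ vertical, route the vertical boundary through the $k$ edges of a zig-zag path $Z_1$ of homology $(0,1)$, and reduce to showing that every $\gamma\in\mathcal L(C_\Gamma)$ has first homology coordinate at most $k$.

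\textbf{The missing step.} The one step specific to $C_\Gamma$ is that at most $k$ of the $2k$ crossing edges can be traversed left to right. You never prove it: you call it the main obstacle and say only that your two facts ``should'' give it.

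\textbf{Fact~1 is false.} The two parallel copies $\tau^\ell(e)$ and $\tau^r(e)$ of an edge $e$ of $Z_1$ can be used simultaneously in the same direction by two different loops. This creates no odd trivial loop and no crossing. It is exactly what the configurations of Lemma~\ref{loops without double edges on corner graphs} do: the two contour loops around $\gamma_i$ traverse both copies of each edge of $\gamma_i$ in the direction of $\gamma_i$.

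For the $1\times 1$ hexagonal lattice, the cycle $b\to w\to b$ gives two disjoint loops of length $3$. They cover all six corners of $C_\Gamma$, both traverse a copy of the same edge in the same direction, and together they realise an extremal point of $2N(\Gamma)$. So the count cannot be organised edge by edge of $Z_1$. You also do not say how Observation~\ref{at most 2 loops} would turn into a count of crossing edges.

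\textbf{The missing idea: pair crossing edges by face, not by edge.}
\begin{enumerate}
\item Let $e_1,\dots,e_k$ be the edges of $Z_1$ in cyclic order, and let $x_j$ be the common vertex of $e_j$ and $e_{j+1}$.
\item Let $G_j$ be the face filling the angle of $Z_1$ at $x_j$. The vertical boundary passes through $G_1,\dots,G_k$ in turn.
\item The crossing edges of $C_\Gamma$ inside $G_j$ are exactly the copies of $e_j$ and $e_{j+1}$ along $G_j$, and both are incident to the corner $(x_j,G_j)$.
\item Orient both edges left to right. Then $(x_j,G_j)$ is their common tail if $x_j$ lies left of the boundary, and their common head if it lies to the right.
\item Every corner has exactly one successor and one predecessor in $\gamma$. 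Hence at most one edge of each of these $k$ pairs is traversed left to right.
\end{enumerate}
Therefore the first coordinate of $[\gamma]$, which equals left-to-right crossings minus right-to-left ones, is at most $k$.

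Equivalently, the crossing edges form $k$ disjoint paths of length two, whose maximum matching has size $k$. This is what the paper's one-line claim, that ``the maximal matching crossing the vertical boundary has $k$ edges'', amounts to.

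Your remark that consecutive copies share corners was the right observation. Made precise as above, it suffices, and the rectangle argument should be dropped.
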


\begin{proof}
    We proceed similarly to the proof of Lemma \ref{N_delta < N}. Up to an $SL_2(\mathbb{Z})$ transformation, we may assume that $N(\Gamma)$ has a vertical side. By Lemma \ref{fundamental_domain matching}, the graph $\Gamma$ admits an embedding for which the number of intersections with the right boundary is $k = \sum_{i=1}^n \max(\alpha_i,0)$, where $e_i = (\alpha_i, \beta_i)$ are the primitive edge vectors of $N(\Gamma)$. Consequently, the maximal matching crossing the right boundary of the fundamental domain consists of $k/2$ edges.
    
    After replacing $\Gamma$ by its corner graph $C_{\Gamma}$, Observation \ref{at most 2 loops} implies that the maximal matching crossing the vertical boundary has $k$ edges. It follows that \[N(C_{\Gamma}) \subseteq [-k, k] \times \mathbb{R}.\] Applying the same argument to each side of the Newton polygon, we conclude that $N(C_{\Gamma}) \subseteq 2 N(\Gamma)$.
\end{proof}

Using Proposition \ref{homologies C_Gamma} together with Lemmas \ref{2N Gamma < N C_Gamma} and \ref{N(C_Gamma) < 2 N(Gamma)}, we obtain the following.

\begin{cor}
    Let $\Gamma$ be an isoradial bipartite graph, and let $C_\Gamma$ be its corner graph. Then the primitive edge vectors of the Newton polygon $N(C_\Gamma)$ coincide with the homology classes of the zig-zag paths in $C_\Gamma$.
\end{cor}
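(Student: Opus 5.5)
The plan is to combine the two inclusions already proved into an equality of polygons, and then transfer the zig-zag description from $\Gamma$ to $C_\Gamma$ through Proposition \ref{homologies C_Gamma}.

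First, Lemmas \ref{2N Gamma < N C_Gamma} and \ref{N(C_Gamma) < 2 N(Gamma)} give $2N(\Gamma)\subseteq N(C_\Gamma)\subseteq 2N(\Gamma)$, so $N(C_\Gamma)=2N(\Gamma)$. Scaling a convex integral polygon by $2$ doubles each side, so every side vector of $2N(\Gamma)$ is twice the corresponding side vector of $N(\Gamma)$. Hence the multiset of primitive edge vectors of $N(C_\Gamma)$ is the multiset of primitive edge vectors of $N(\Gamma)$, taken twice: each side of lattice length $m$ in $N(\Gamma)$ becomes a side of lattice length $2m$.

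Second, since $\Gamma$ is isoradial bipartite, it is in particular minimal. Corollary \ref{NPs and homology classes} therefore gives a bijection between $H(\mathcal Z(\Gamma))$ and the primitive edge vectors of $N(\Gamma)$. Here we use the full Kasteleyn matrix, which is the convention under which $N(\Gamma)$ is defined, and the multiset includes both orientations of each bipartite zig-zag path. By Proposition \ref{homologies C_Gamma}, $\mathcal Z(C_\Gamma)$ is the disjoint union of $\phi^\ell(\mathcal Z(\Gamma))$ and $\phi^r(\mathcal Z(\Gamma))$, both maps are injective, and both preserve homology. So $H(\mathcal Z(C_\Gamma))$ is exactly $H(\mathcal Z(\Gamma))$ with every multiplicity doubled. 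This matches the primitive edge vectors of $2N(\Gamma)=N(C_\Gamma)$ counted with multiplicity, which proves the statement.

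The main point needing care is the multiplicity bookkeeping. We must check that ``primitive edge vectors'' in Corollary \ref{NPs and homology classes} means each side of lattice length $m$ contributes $m$ copies of its primitive direction, and that scaling by $2$ doubles exactly these counts. The underlying inclusions were established earlier, so no new analytic estimate is required.
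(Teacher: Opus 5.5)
Your proposal is correct and follows the paper's route exactly. The paper likewise obtains $N(C_\Gamma)=2N(\Gamma)$ from Lemmas \ref{2N Gamma < N C_Gamma} and \ref{N(C_Gamma) < 2 N(Gamma)}, then transfers the zig-zag description through Proposition \ref{homologies C_Gamma}, with Corollary \ref{NPs and homology classes} applied to $\Gamma$; your multiplicity bookkeeping (a side of lattice length $m$ becomes one of length $2m$, matching the doubling of $H(\mathcal Z(\Gamma))$ under $\phi^\ell\sqcup\phi^r$) makes explicit what the paper leaves implicit.
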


\subsection{Other non-bipartite graphs}

Here, we would like to briefly discuss yet another family of non-bipartite graphs. These graphs are known as Fisher graphs, and they are obtained from an initial graph $\Gamma$ by replacing each vertex of degree $d \geq 3$ with a decoration of $2d$ vertices, which contains a chain of $d$ triangles, as shown in Figure \ref{fig:zigzagsgenfisher}. This is one of the possible ways to relate an Ising model of the initial graph with a dimer model on the decorated graph, introduced by Fisher \cite{Fisher66}. 

\begin{figure}[h!]
    \centering
    \includegraphics[width=0.8\textwidth]{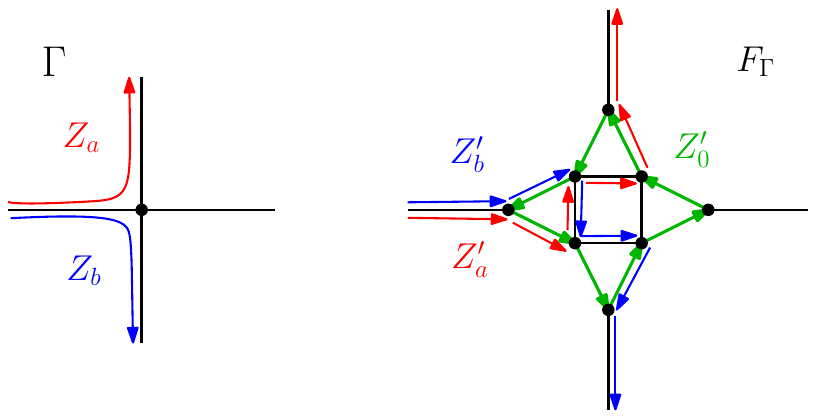}
    \caption{\label{fig:zigzagsgenfisher} The zig-zag paths on the graph $\Gamma$, and the corresponding zig-zag paths in $F_\Gamma$, containing an extra zig-zag path of $z'_0$ of homology zero.}
\end{figure}

For these graphs, we also discuss their Newton polygons and zig-zag paths. We observe an intermediate phenomenon where the Newton polygon of the Fisher graph, denoted by $F_\Gamma$, contains $N(\Gamma)$ and is contained in $2N(\Gamma)$. However, as we are mainly relying on the techniques used in previous sections, we omit the proofs of these results.

Similarly to what we proved in Proposition \ref{homologies_gamma_delta}, we can observe in the case of Fisher graphs that there is a map between the zig-zag paths of $\Gamma$ and the zig-zag paths of $F_\Gamma$. However, we notice something slightly different happening for this family of graphs. In fact, this map is no longer bijective. As noted in Figure \ref{fig:zigzagsgenfisher}, the zig-zag paths $Z_a$ and $Z'_a$ (resp. $Z_b$ and $Z'_b$) have the same homologies. On the other hand, for each vertex in $\Gamma$, there are two new zig-zag paths of homology zero appearing in $F_\Gamma$. We denote one such zig-zag path in Figure \ref{fig:zigzagsgenfisher} by $Z'_0$. The other is obtained by reversing the orientation of $Z'_0$. Therefore, we can state the following analogue of Proposition \ref{homologies_gamma_delta}.

\begin{prop}
    Let $\Gamma$ be a torus graph of minimum degree $3$, and let $F_\Gamma$ be its Fisher graph. Then
    \[H(\mathcal{Z}(F_\Gamma)) = H(\mathcal{Z}(\Gamma)) \cup \{(0,0)^{2n}\},\]
    where $n = |V(\Gamma)|.$
\end{prop}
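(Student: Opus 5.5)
We plan to argue exactly as in Proposition \ref{homologies_gamma_delta}, working locally around each decoration. Fix a vertex $v$ of degree $d\ge 3$. In $F_\Gamma$ it is replaced by $2d$ vertices: $d$ ``outer'' vertices, each carrying one long edge to a neighbouring decoration, and a chain of $d$ triangles arranged cyclically around $v$. Every edge of $F_\Gamma$ is then either a long edge, naturally identified with an edge of $\Gamma$, or a short edge inside some decoration. We define $\phi$ on zig-zag paths of $F_\Gamma$ that use at least one long edge by keeping only the oriented long edges and replacing each by the corresponding oriented edge of $\Gamma$. Contracting each decoration to a point is a homotopy on the torus, so $[Z]=[\phi(Z)]$, just as before.

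The main step is a local case analysis inside one decoration, which we will carry out from the picture in Figure \ref{fig:zigzagsgenfisher}. Suppose a zig-zag path of $F_\Gamma$ enters the decoration along the long edge attached to the $i$-th outer vertex, arriving after a left (resp. right) turn. We claim that the forced alternation of maximal turns makes it run along the triangle chain and exit through the long edge $i+1$ (resp. $i-1$), arriving there with the correct parity. This is exactly the behaviour of a zig-zag path of $\Gamma$ that turns maximally right (resp. left) at $v$, so $\phi(Z)$ is again a zig-zag path of $\Gamma$. The same analysis shows that every zig-zag path of $\Gamma$ lifts, so $\phi$ is surjective.

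Injectivity is where the Fisher case departs from Proposition \ref{homologies_gamma_delta}. The decoration is traversed in two ways, and the pairs $Z_a,Z'_a$ and $Z_b,Z'_b$ in the figure show that the map from paths of $F_\Gamma$ to paths of $\Gamma$ need not be one-to-one. We therefore intend to prove the multiset statement by showing that the lifts of a given $Z\in\mathcal Z(\Gamma)$ are determined by their long edges up to this ambiguity, and that each such lift has homology $[Z]$. We will count the oriented long edges together with their entry parities. The point is that each oriented edge of $\Gamma$ lies on exactly two oriented zig-zag paths of $\Gamma$, and each oriented long edge lies on exactly two oriented zig-zag paths of $F_\Gamma$. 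Matching these incidences yields a multiplicity-preserving correspondence between the long-edge-using zig-zag paths of $F_\Gamma$ and $\mathcal Z(\Gamma)$.

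Finally, we account for the zig-zag paths of $F_\Gamma$ that use no long edge. Following the turns inside a decoration, a path of this kind runs around the inner cycle of the triangle chain and closes up inside the decoration; this is $Z'_0$, and its reverse gives a second one. Both are contractible, so each has homology $(0,0)$. We then check that no other path stays inside the decoration, again by following the forced turns from each short edge. This contributes exactly two copies of $(0,0)$ per vertex, hence $(0,0)^{2n}$ in total. We expect the main obstacle to be making the local case analysis rigorous, in particular the correct bookkeeping of turn parity at the outer vertices, since everything else reduces to the argument already given for Proposition \ref{homologies_gamma_delta}.
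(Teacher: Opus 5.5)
Your overall route (project onto long edges, contract decorations to get $[Z]=[\phi(Z)]$, analyse one decoration locally, then add the closed paths inside decorations) is the one the paper sketches by analogy with Proposition \ref{homologies_gamma_delta}. The gap is in your injectivity paragraph, which rests on a misreading of Figure \ref{fig:zigzagsgenfisher}. There, $Z_a$ and $Z'_a$ are a zig-zag path of $\Gamma$ and its lift to $F_\Gamma$, not two paths of $F_\Gamma$ with the same image. The paper's remark that the correspondence is no longer bijective refers only to $Z'_0$ and its reverse, which use no long edge.

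If $\phi$ really identified two distinct long-edge-using paths, both would carry the class $[\phi(Z)]$. That class would then occur more often in $H(\mathcal{Z}(F_\Gamma))$ than in $H(\mathcal{Z}(\Gamma))$, and the identity would be false. So determining lifts ``up to this ambiguity'' cannot give the statement. Your incidence-matching sentence, as written, also gives no mechanism that produces a bijection between paths.

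What you need is injectivity on long-edge-using paths, and your own local claim already gives it. Write the decoration as outer vertices $o_1,\dots,o_d$ and inner vertices $p_1,\dots,p_d$, with triangles $o_ip_ip_{i+1}$. Entering $o_i$ with one turn type forces the exit at $o_{i+1}$, and with the other at $o_{i-1}$. These differ, since $i+1\not\equiv i-1 \pmod d$ because $d\ge 3$; this is where the minimum-degree hypothesis is used. Hence the cyclic sequence of oriented long edges of a path determines its entry turn at every decoration, and therefore all its short edges, exactly as in Proposition \ref{homologies_gamma_delta}.

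Equivalently, call a \emph{state} an oriented edge together with the type of the next turn; a zig-zag path is determined by any one of its states. The local analysis gives a bijection between states on long edges of $F_\Gamma$ and states of $\Gamma$ that commutes with the step maps, so the orbits correspond one-to-one. This is the precise form of your incidence count.

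A smaller point: $Z'_0$ does not run along the inner cycle $p_1\cdots p_d$. No zig-zag path uses more than two consecutive inner edges. It runs along the spokes instead, as $p_1o_1p_2o_2\cdots p_do_d$. Counting states confirms there are exactly two such paths per decoration. The $2d$ passages through a decoration use all $4d$ inner-edge states and $4d$ of the $8d$ spoke states. The remaining $4d$ spoke states are exactly $Z'_0$ and its reverse.
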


Similarly to what we discussed in Observation \ref{at most 2 loops} and Lemma \ref{N(C_Gamma) < 2 N(Gamma)}, we can show that $N(F_\Gamma) \subseteq 2N(\Gamma)$. Using the ideas from the proof of Lemma \ref{N(Gamma) < N(Gamma_Delta_v)}, we can show that $N(\Gamma) \subseteq N(F_\Gamma)$. In this case, as we are allowing arbitrary degrees $d \geq 3$, it is slightly more delicate to associate a collection of loops in $F_\Gamma$ that has the same homology as an extremal collection of loops of $\Gamma$. However, by carefully choosing the double edges, it is possible to obtain such a collection of loops. 

Note that the following result assumes that the edge weights of $F_{\Gamma}$ are generic. This is not the case for the usual Fisher correspondence, where all edges of the decoration are assigned weight one.

\begin{prop}
    Let $\Gamma$ be an isoradial bipartite graph. Then
    \[N(\Gamma) \subseteq N(F_\Gamma) \subseteq 2N(\Gamma).\]
\end{prop}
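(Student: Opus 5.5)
The statement splits into two inclusions, $N(\Gamma) \subseteq N(F_\Gamma)$ and $N(F_\Gamma) \subseteq 2N(\Gamma)$. I will prove them separately, reusing the templates of Lemmas \ref{N(Gamma) < N(Gamma_Delta_v)} and \ref{N(C_Gamma) < 2 N(Gamma)}.

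For the lower inclusion, let $(I,J)$ be an extremal point of $N(\Gamma)$. By Lemma \ref{ext_homology}, the class $(I,J)$ is realizable. After replacing trivial even loops by double edges as in the proof of Lemma \ref{nonvanishing_coeffs}, I fix $\gamma \in \mathcal{L}(\Gamma)$ made of non-trivial loops of a common class together with double edges. I then build $\gamma' \in \mathcal{L}(F_\Gamma)$ vertex by vertex inside each decoration, keeping every non-trivial loop of the same homology. The decoration of a degree-$d$ vertex $v$ consists of $2d$ vertices: $d$ outer vertices $o_1,\dots,o_d$ attached to the long edges, and a chain of $d$ triangles. I expect the two cases to work as follows.
\begin{itemize}
\item If $v$ is covered by a double edge $vu$ using the $j$-th long edge, then in $\gamma'$ I keep the long edge at $o_j$ doubled. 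The remaining $2d-1$ vertices of the decoration must be covered by double edges using only internal short edges. This is a matching of the decoration with $o_j$ removed, and I will check it exists for every $j$.
\item If a loop passes through $v$ via long edges $i$ and $j$, I need an internal path from $o_i$ to $o_j$ such that all remaining decoration vertices admit a perfect matching by internal edges.
\end{itemize}
Both statements are finite checks on the chain-of-triangles decoration. A parity count supports them: there are $2d$ vertices, and in the first case one vertex is removed, while in the second the path uses an even or odd number of vertices, so the path length must be chosen to leave an even remainder.

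Once $\gamma'$ exists, its homology is $(I,J)$ and it is realizable, since its trivial parts are only double edges and its non-trivial loops are the images of the loops of $\gamma$. Lemma \ref{nonvanishing_coeffs} then shows that the coefficient of $z^Iw^J$ in the characteristic polynomial of $F_\Gamma$ is not the zero polynomial. This is where the genericity of the weights is used. Convexity gives $N(\Gamma)\subseteq N(F_\Gamma)$.

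For the upper inclusion I follow Lemma \ref{N(C_Gamma) < 2 N(Gamma)}. By Remark \ref{sl2_NP} I move a side of $N(\Gamma)$ to a vertical position with the polygon lying to its left. By Lemmas \ref{vertical zig-zag path} and \ref{fundamental_domain matching}, I embed $\Gamma$ so that the right boundary crosses exactly the $k$ edges of one zig-zag path, with $k/2$ being the maximal $z$-degree. Placing the decorations away from the boundary, the only edges of $F_\Gamma$ crossing the boundary are the corresponding long edges, and there are $k$ of them.

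Any $\gamma\in\mathcal{L}(F_\Gamma)$ with non-zero contribution has, by Lemma \ref{char_poly_properties}(c), all its non-trivial loops of parallel class. Its $z$-exponent is therefore at most the number of boundary-crossing edges used in one direction, which is at most $k$. This gives $N(F_\Gamma)\subseteq (-\infty,k]\times\mathbb{R}$. The analogue of Observation \ref{at most 2 loops} for decorations, namely that at most two disjoint non-trivial loops pass through a decoration, is what prevents the count from exceeding $k$ when several long edges at the same decoration are used. Repeating this for every side and undoing the $SL_2(\mathbb{Z})$ transformations yields $N(F_\Gamma)\subseteq 2N(\Gamma)$.

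The main obstacle is the combinatorial lemma in the lower inclusion: for every pair of outer vertices of a chain-of-triangles decoration there must be an internal path between them whose complement has an internal perfect matching, and likewise a perfect matching of the decoration with any single outer vertex removed. If some pair fails, I would instead choose $\gamma$ among the realizable collections of class $(I,J)$ so as to avoid the bad turns. Alternatively, I would reroute the loop through adjacent double edges, as the paper hints with "carefully choosing the double edges", since isoradiality of $\Gamma$ gives freedom in which edges the extremal loops use.
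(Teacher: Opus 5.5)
\textbf{Gap in the lower inclusion, first case.} Your two-inclusion plan is the one the paper sketches, and your upper inclusion is fine. The first case of your lower-inclusion construction, however, cannot be carried out.

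Suppose $v$ is covered in $\gamma$ by a double edge, and you keep the corresponding long edge doubled in $\gamma'$. Then the decoration $D_v$ minus $o_j$ has $2d-1$ vertices, which is odd. So it has no internal perfect matching, and it cannot be covered by double edges or even trivial loops. Covering it with an odd trivial loop instead gives zero net contribution by Lemma \ref{char_poly_properties}(b), and it breaks the argument of Lemma \ref{nonvanishing_coeffs}. The matching you plan to check ``for every $j$'' therefore exists for no $j$: your parity count rules it out rather than supporting it. (The template of Lemma \ref{N(Gamma) < N(Gamma_Delta_v)} works for a triangle only because $3-1$ is even.)

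Your fallbacks do not repair this. An extremal $\gamma$ contains double edges whenever its non-trivial loops miss some vertex, and rerouting loops does not remove them.

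\textbf{The fix.} The missing step, presumably what the paper means by ``carefully choosing the double edges'', is to not use the long edge at all. For a double edge $uv$ of $\gamma$, cover $D_u$ and $D_v$ each by an internal perfect matching. This exists because each decoration has an even number of vertices: writing its triangles as $o_is_{i-1}s_i$ (indices mod $d$, inner vertices $s_i$ shared by consecutive triangles), take $\{o_is_i\}$.

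\textbf{Second case.} Your second case does go through, but the internal path has to be chosen with care.
\begin{itemize}
\item The naive path $o_i\to s_i\to s_{i+1}\to\cdots\to s_{j-1}\to o_j$ isolates $o_{i+1},\dots,o_{j-1}$ and leaves an odd remainder.
\item The path $o_i\to s_{i-1}\to s_i\to o_{i+1}\to s_{i+1}\to\cdots\to s_{j-1}\to o_j$ works. It leaves $s_j,o_{j+1},s_{j+1},\dots,s_{i-2},o_{i-1}$, which is perfectly matched by the edges $s_ko_{k+1}$.
\end{itemize}
With these two local rules, $\gamma'$ consists of the lifted non-trivial loops (same homology) plus double edges. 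Your conclusion via Lemma \ref{nonvanishing_coeffs}, genericity and convexity then stands.

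\textbf{Upper inclusion.} The crossing count alone suffices. Only the $k$ long edges cross the vertical boundary, so every $z$-exponent in the characteristic polynomial of $F_\Gamma$ is at most $k$, which is the maximal first coordinate of $2N(\Gamma)$. You do not need an analogue of Observation \ref{at most 2 loops}. Such an analogue is not evident for this decoration anyway, since several disjoint internal paths can pass through one $D_v$.
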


In Figure \ref{fig:sqfisherlattice}, we show an example of a graph for which the Newton polygon is strictly between the two bounds.

\begin{figure}[h!]
    \centering
    \includegraphics[width=0.9\textwidth]{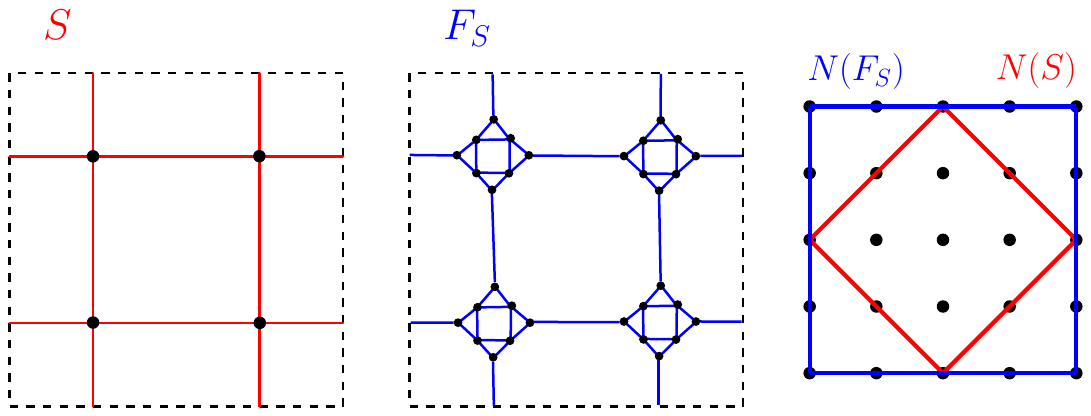}
    \caption{\label{fig:sqfisherlattice} The square lattice $S$, the graph $F_{S}$ and the comparison of their Newton polygons.}
\end{figure}

\section{Marginal polynomials} \label{marginal_polys}

In this section, we study the roots of the polynomials that correspond to the boundary sides of the Newton polygons, which we call \emph{marginal polynomials}. For minimal bipartite graphs, George, Goncharov, and Kenyon~\cite{GGK23} showed that these polynomials have a universal property. In fact, they are always real-rooted, and up to a sign, their roots can be expressed as the alternating product of the edge weights of the zig-zag paths with a corresponding homology class. Here, we study several examples of non-bipartite graphs and show the real-rootedness of their marginal polynomials.

We first focus on specific examples of the triangular lattice and the Fisher graph of the hexagonal lattice for which we can compute precisely the roots of their marginal polynomials. Then, we prove a general result for a large family of non-bipartite graphs whose Newton polygons we discussed in Section \ref{trunc_graphs}. 

\subsection{Triangular lattice}

We first consider the triangular lattice, whose fundamental domain has size $m \times n$ and is denoted by $T_{m,n}$. This graph has a perfect matching if at least one of $m$ and $n$ is even. In Figure \ref{fig:T2x2andNP2x2}, there is a fundamental domain of size $2 \times 2$ together with its edge weights and Kasteleyn orientation. 

The Newton polygon for the triangular lattice $T_{m,n}$ has a hexagonal shape, with vertices $(m, 0), (0, n), (m, n)$ and their opposites $(-m, 0), (0, -n), (-m, -n)$. The Newton polygon of the $2 \times 2$ triangular lattice is shown in Figure \ref{fig:T2x2andNP2x2}. 

\begin{figure}[h!]
    \centering
    \includegraphics[width=0.75\textwidth]{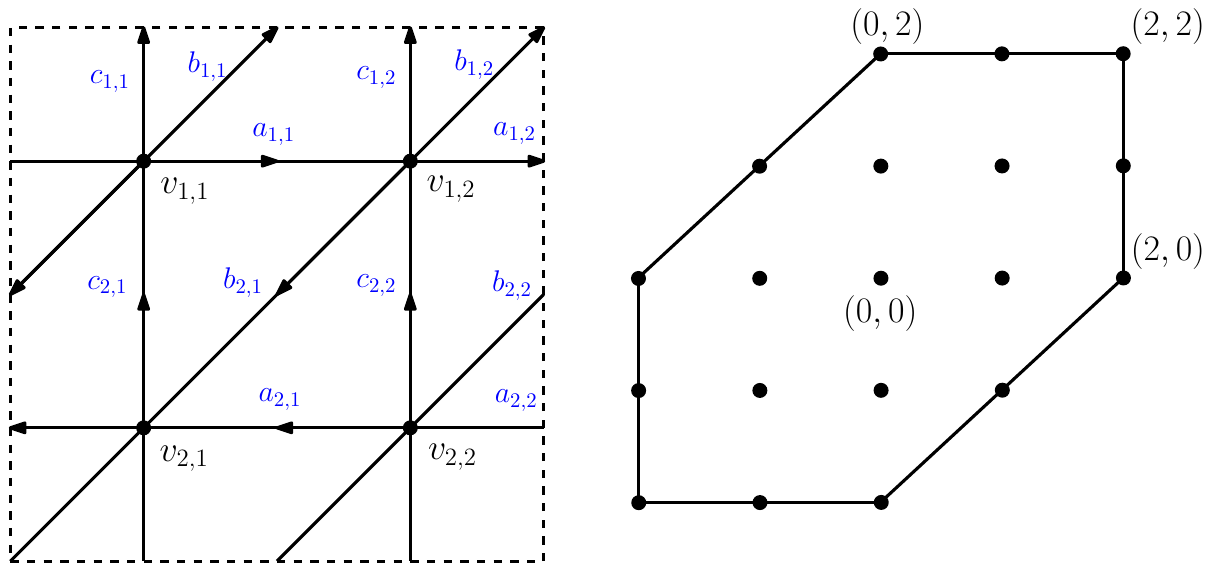}
    \caption{\label{fig:T2x2andNP2x2} Triangular lattice $T_{2,2}$ and its Newton polygon.}
\end{figure}

While all the $c$-edges carry the same Kasteleyn orientation, the orientation of the $a$-edges and of the $b$-edges depends on the parity of their first coordinate: to the right if it is odd, and to the left if it is even. We are interested in marginal polynomials that correspond to the boundaries of the Newton polygon. 

First, we look at the vertical side of the Newton polygon that connects the points $(m,0)$ and $(m,n)$. The polynomial obtained by summing the monomials that correspond to this side of the Newton polygon can be written as $z^m P_R(w)$. Notice that the polynomial $P_R$ has degree $n$. Similarly, we can examine the polynomial that corresponds to the top horizontal side of the Newton polygon. We denote this polynomial by $P_U(z)$, and it has degree $m$.

\begin{prop} \label{rootsT}
    Let $m,n \in \mathbb{N}$ and assume that $m$ is even. The polynomials $P_R(w)$ and $P_U(z)$ have all real roots, and each root can be written as 
    \[r_i = \prod_{1 \leq j \leq m} \frac{a_{j,i}}{b_{j,i}}, \text{ for } 1 \leq i \leq n,\]
    \[u_j = (-1)^{(j-1)n} \prod_{1 \leq i \leq n} \frac{c_{j,i}}{b_{j,i}} ,\text{ for } 1 \leq j \leq m.\]
\end{prop}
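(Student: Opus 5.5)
We will compute $P_R(w)$ directly from the loop expansion of Lemma~\ref{char_poly_properties}(a) and show that it factors as a product of $n$ linear factors, one for each horizontal row of the fundamental domain. Since $\mathrm{wt}$ is positive and the claimed roots are real, real-rootedness then follows immediately. We treat $P_R$ in detail; $P_U$ is handled in the same way with the roles of rows and columns exchanged.

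\emph{Step 1: the contributing configurations.} Monomials $z^m w^j$ on the right side of $N(T_{m,n})$ come from collections $\gamma \in \mathcal{L}(T_{m,n})$ whose total horizontal homology is $m$. By Lemma~\ref{char_poly_properties}(b),(c), the nontrivial loops are parallel. To reach first coordinate $m$ with only $mn$ vertices, every vertex must lie on a loop that crosses the vertical boundary; double edges and trivial loops are not allowed. Using the argument of Lemma~\ref{ext_homology} together with the boundary counting of Lemma~\ref{N_delta < N}, we expect these configurations to be exactly the following. For each row $i$, choose a loop of homology $(1,\epsilon_i)$ with $\epsilon_i\in\{0,1\}$ that travels left to right through the $m$ vertices of its "zig-zag strip". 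At each step it uses either the $a$-edge or the $b$-edge, and the homology forces consistency. Concretely, in row $i$ we either use all horizontal $a_{j,i}$ edges (a loop of homology $(1,0)$), or, after shifting by diagonal/vertical moves, we use the $b_{j,i}$ edges, which advances $w$ by one. Thus the rows decouple, and one gets $n$ independent binary choices.

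\emph{Step 2: signs.} For each choice vector $(\epsilon_i)$, compute the sign $(-1)^{|V|-\ell(\gamma)}\prod\kappa(e)$. Here $\ell(\gamma)=n$ and $|V|=mn$. The Kasteleyn signs of the $a$- and $b$-edges alternate with the parity of the first coordinate, and $m$ is even, so along a full row the product of the $a$-signs and the product of the $b$-signs are both fixed. We must check that the relative sign between the all-$a$ and all-$b$ choices in a row is $-1$ and is the same for every row. This will give
\[
P_R(w) = C\prod_{i=1}^n\Big(\prod_{j}a_{j,i} - w\prod_j b_{j,i}\Big),
\]
whose roots are $r_i$ as stated.

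\emph{Step 3: $P_U$.} For $P_U$ the analogous factorization runs over columns, with loops of homology $(\epsilon,1)$ using the $c$- or $b$-edges. The column parity $j$ now enters through the $n$ edges crossed, which yields the sign $(-1)^{(j-1)n}$.

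The main obstacle is Step 1: showing that the rows really decouple. One must rule out loops that switch rows (for example, mixing diagonal $b$-moves with $a$-moves) or that have homology $(1,k)$ with $k\ge2$, and handle configurations in which two rows are joined into a single loop of homology $(2,\ast)$, which is not primitive. My first attempt would be a counting argument: each vertex has exactly one outgoing edge in $\gamma$, and first coordinate $m$ forces every outgoing edge to have a positive horizontal step. That leaves only $a$- and $b$-edges, which are exactly the rightward edges. The map from vertices to their successors is then a bijection built from steps $(1,0)$ or $(1,1)$, and bijectivity forces the choice to be constant along each diagonal strip. Once that is in place, the remaining sign bookkeeping (cycle structure versus $\kappa$) is a routine but careful parity check.
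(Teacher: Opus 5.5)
There is a genuine gap, and it sits where the hypothesis that $m$ is even actually has to be used.

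Your closing counting argument is the right one, and it is the paper's. The net flux through every vertical line is $m$, so every vertex sends its outgoing edge rightward along an $a$- or a $b$-edge. A bijection $y\mapsto y+\delta(y)$ of $\mathbb{Z}/m$ with $\delta(y)\in\{0,1\}$ must have $\delta$ constant. What this shows is that the independent binary choice is made per column transition: all of $a_{1,i},\dots,a_{m,i}$, or all of $b_{1,i},\dots,b_{m,i}$. It does not show that the loops decouple. If $k$ columns use $b$-edges, the successor map returns to a column shifted by $k$ in $\mathbb{Z}/m$. So $\gamma$ consists of $d=\gcd(m,k)$ loops, each of homology $(m/d,k/d)$; for $k=1$ it is a single loop through all $mn$ vertices. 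Your Step 1 picture, with one loop of homology $(1,\epsilon_i)$ per row and $\ell(\gamma)=n$, is therefore wrong. It would even give total first coordinate $n$ rather than $m$. The ``merged'' loops you propose to rule out are precisely the configurations that occur.

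Consequently Step 2 computes the sign with the wrong $\ell(\gamma)$, and it defers the one check that matters. The Kasteleyn signs give $(-1)^{m/2}$ for each column whichever choice is made, so they produce no relative sign between the two choices. The relative $-1$ comes entirely from the cycle structure: $(-1)^{mn-\gcd(m,k)}=(-1)^k$, because $m$ even forces $\gcd(m,k)\equiv k \pmod 2$. This is what makes the signed sum multiplicative over columns and gives
$P_R(w)=(-1)^{mn/2}\prod_{i=1}^n\bigl(\prod_j a_{j,i}-w\prod_j b_{j,i}\bigr)$.
Without this parity fact you have no factorization.

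The same issue affects Step 3. For $P_U$ the number of loops is $\gcd(n,k)$, where $k$ is the number of rows using $b$-edges, and its parity tracks $k$ only when $n$ is even. For odd $n$ the loop sign is constant, so the relative sign in row $j$ is the Kasteleyn factor $(-1)^{(j-1)n}$ alone. The roots then come out as $-(-1)^{(j-1)n}\prod_i c_{j,i}/b_{j,i}$. For example, with the orientation described in the paper and $m=2$, $n=3$, a direct expansion gives $P_U(z)=-(C_1+B_1z)(C_2-B_2z)$, where $C_j=\prod_i c_{j,i}$ and $B_j=\prod_i b_{j,i}$. The roots are $(-1)^{jn}C_j/B_j$ rather than $(-1)^{(j-1)n}C_j/B_j$. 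So ``the same way'' does not carry over verbatim; the paper's one-line ``analogous argument'' glosses over this too. Real-rootedness is unaffected.
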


\begin{proof}
    Recall from Lemma \ref{char_poly_properties} (a) that each term in $P(z,w)$ represents a collection of oriented loops on the graph $T_{m,n}$. To obtain a term in which the $z$-variable has degree $m$, every vertex in $\{v_{1,n}, \dots, v_{m,n}\}$ must have an outgoing edge crossing the right boundary of the fundamental domain. In particular, the vertex $v_{1,n}$ must be connected either to $v_{1,1}$ or to $v_{m,1}$.
    
    If $v_{1,n}$ is connected to $v_{1,1}$, then $v_{2,n}$ is connected to $v_{2,1}$ because otherwise $v_{1,1}$ would have two ingoing edges. Continuing the argument inductively, all the edges crossing the right boundary must be parallel. Analogously, for each column, we can independently choose either all $a$-edges or all $b$-edges. This allows us to express the polynomial $P_R(w)$ as a product of linear factors, from which we can extract its roots.
    
    There are two contributions that affect the signs of these factors. The first comes from the Kasteleyn orientation. In our case, selecting all $a$-edges in the first column yields the product $(-1)^{m/2}a_{1,1}a_{2,1}...a_{m,1}$, while selecting all $b$-edges yields $(-1)^{m/2}b_{1,1}b_{2,1}...b_{m,1}w$. The second contribution comes from the number of loops in a given collection, by Lemma \ref{char_poly_properties} (a). Let $k \in \{0, 1, \dots, n\}$, and consider the monomial $z^m w^k$ lying on the vertical side of the Newton polygon that we are considering. Since the collections of loops are always assumed to be non-intersecting, Lemma \ref{char_poly_properties} (c) implies that all the non-trivial loops must be parallel. Moreover, the homology class of each loop is $(m/d, k/d)$, where $d = \gcd(m,k)$, and therefore there are exactly $d$ such loops, and by Lemma \ref{char_poly_properties} (a), the total sign of this collection of loops is $(-1)^{mn-d}$.
    
    Since $m$ is even, the integers $d$ and $k$ have the same parity, and thus $(-1)^{mn-d} = (-1)^k$. Since $k$ is the number of columns of $b$-edges appearing in that collection of loops, we conclude that whenever the $b$-edges from a given column are included, the global sign is multiplied by $-1$. It follows that the polynomial can be written as the product

    \begin{align*}
        P_R(w) &= \prod_{j=1}^n((-1)^{m/2}a_{1,j}a_{2,j}...a_{m,j} - (-1)^{m/2}b_{1,j}b_{2,j}...b_{m,j}w) \\
    &= (-1)^{mn/2} \prod_{j=1}^n(a_{1,j}a_{2,j}...a_{m,j} -b_{1,j}b_{2,j}...b_{m,j}w).
    \end{align*}
    
    The roots of $P_R(w)$ are therefore of the desired form.

    To determine the roots of the polynomial $P_U(z)$, we proceed similarly. Since the Kasteleyn orientation is not symmetric in the $z$-direction and $w$-direction, we need to take care of the signs of the roots. For each row, we have a choice either to take all $b$-edges or all $c$-edges. The product of $b$-edges in the row $j$ is equal to $(-1)^{(j-1)n}b_{j,1}b_{j,2} \dots b_{j,n}z$, because the orientation of $b$-edges alternates in each row. The product of $c$-edges is equal to $c_{j,1}c_{j,2} \dots c_{j,n}$, because the $c$-edges are always oriented upwards. By an analogous argument as for $P_R(w)$, we notice that the global sign of the permutation is multiplied by $-1$ whenever we take all $b$-edges. So, the polynomial $P_U$ can be written as the product

    \[P_U(z) = (-1)^{mn/2} \prod_{j = 1}^m(c_{j,1}c_{j,2}...c_{j,n} - (-1)^{(j-1)n}b_{j,1}b_{j,2}...b_{j,n}z).\]
\end{proof}

A combinatorial way to see the roots of $P_R$ is to notice that each root is actually the alternating product of the edge weights that correspond to the zig-zag path with homology class $(0,1)$, which is the direction of the vector corresponding to the vertical side of the Newton polygon. The triangular lattice is symmetric in terms of $a, b, c$-edges. So, we can repeat the same argument for the remaining sides of the Newton polygon. 

We can prove an analogous result for a larger family of non-bipartite graphs.

\begin{thm}
    Let $\Gamma$ be an isoradial torus graph such that its Newton polygon coincides with the polygon described by the homology classes of the zig-zag paths of $\Gamma$. Then the roots of the marginal polynomials associated with $N(\Gamma)$, up to a sign, can be described as the alternating product of the edge weights of a zig-zag path with the prescribed homology class.
\end{thm}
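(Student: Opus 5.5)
We follow the strategy of Proposition \ref{rootsT}, replacing the explicit lattice structure by the zig-zag combinatorics of Lemma \ref{fundamental_domain matching}. Fix a boundary side of $N(\Gamma)$. By Remark \ref{sl2_NP} we may assume it is the right vertical side, lying on the line $\{x = k/2\}$ with $k=\sum_i \max(\alpha_i,0)$. By hypothesis, its primitive direction is the homology class of the zig-zag paths $Z$ with $[Z]=(0,1)$. Let $Z_1,\dots,Z_r$ be these vertical zig-zag paths, with $r$ the lattice length of the side. By Lemma \ref{vertical zig-zag path} and Lemma \ref{fundamental_domain matching}, applied to all of them simultaneously (they are disjoint and parallel, so the argument runs along each of them), we choose a fundamental domain whose right boundary consists of $r$ vertical curves. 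Each curve crosses exactly the edges of one $Z_j$ and no other edges, and the total number of crossed edges is $k$.

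The first step is to classify the collections $\gamma\in\mathcal L(\Gamma)$ contributing to monomials $z^{k/2}w^{s}$. To reach $z$-degree $k/2$, every edge crossing a boundary curve must be used, in the left-to-right direction, by a non-trivial loop. This is the extremal case of the counting in Lemma \ref{N_delta < N}. Along a single zig-zag path $Z_j$, its crossed edges alternate between the two sides of a strip. As in the triangular argument (``otherwise $v_{1,1}$ would have two ingoing edges''), the requirement that no vertex have two incoming or two outgoing edges forces a binary choice for each $Z_j$: we take either all its odd-indexed edges or all its even-indexed edges, oriented consistently. Between consecutive $Z_j$ the region is then forced to be covered, with no remaining freedom affecting the $w$-degree, by double edges or by the connecting segments of the loops. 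Isoradiality, in the form ``two zig-zag paths share at most one edge in the universal cover'', is what prevents other configurations from reaching the extreme $x$-coordinate. The choice made for $Z_j$ shifts the $w$-exponent by $0$ or $1$, because the two alternating halves of $Z_j$ differ in homology by $[Z_j]=(0,1)$.

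This gives the factorization
\[ P_{\mathrm{side}}(w)=\pm C\prod_{j=1}^{r}\bigl(\epsilon_j A_j - \epsilon_j' B_j\, w\bigr), \]
where $A_j$ and $B_j$ are the products of the weights on the two alternating halves of $Z_j$, and $C$ is the weight of the forced remainder. Its roots are $\pm A_j/B_j$, which is exactly the alternating product of the edge weights along $Z_j$. Since all roots are real, this also yields real-rootedness.

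The main obstacle is the sign bookkeeping, together with verifying that the forced remainder really is independent of the choices. The signs come from the Kasteleyn signs $\kappa$ along $Z_j$ and from the factor $(-1)^{|V|-\ell(\gamma)}$ of Lemma \ref{char_poly_properties}(a). Here $\ell(\gamma)$ depends on $\gcd$'s of the loop homologies, as in the proof of Proposition \ref{rootsT}. I would first attempt to show that toggling one $Z_j$ changes the total sign by a factor that does not depend on the other choices. If that works, the polynomial genuinely factors, and since the statement is only up to sign, the global sign need not be tracked. If that fails in general, I would fall back on Lemma \ref{nonvanishing_coeffs} and Lemma \ref{ext_homology} to at least guarantee that the extreme coefficients $A=\prod A_j$ and $B=\prod B_j$ are non-zero. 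I would then argue by a gauge and specialization argument that the contributions cannot cancel, so the product structure persists.
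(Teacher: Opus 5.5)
The paper states this theorem without proof; it is presented as an analogue of Proposition~\ref{rootsT}. The closest argument the paper writes out is the proof of Theorem~\ref{real rooted marginals}, and your outline follows its skeleton.

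\textbf{The set-up is miscounted.} A fundamental domain has a single vertical boundary. By Lemma~\ref{fundamental_domain matching}, each $Z_j$ by itself has $k$ edges, so $r$ cut curves would meet $rk$ edges and would count every $z$-exponent $r$ times. The claim that ``every crossing edge must be used'' contradicts both the degree $k/2$ and your own half-choice. The repair is as follows.
\begin{enumerate}
\item Cut along $Z_r$ only.
\item Note that the $z$-degree of $\gamma$ is the intersection number of $[\gamma]$ with $(0,1)$. So it can be computed on the curve $\gamma_{Z_j}$ of Lemma~\ref{vertical zig-zag path} for every $j$.
\item That curve meets only the $k$ edges of $Z_j$, whose endpoints alternate sides. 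Degree $k/2$ therefore forces every left vertex to send one edge across and no edge to come back.
\end{enumerate}
Hence exactly one alternating half of each $Z_j$ is used, oriented left to right, and no other edge of $Z_j$ (in particular no double edge) appears.

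\textbf{The genuine gap is the factorization itself.} You assert it and then list it as the ``main obstacle''; two steps are missing.

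First, the remainder is not forced. The annulus between $Z_j$ and $Z_{j+1}$ can admit several completions (different routings, double edges, even trivial loops), and your appeal to isoradiality does not show their $w$-degree is constant. What is true, and suffices, is this: whichever half is chosen on each strip, every exit vertex of $Z_j$ already has its in-edge and every entry vertex of $Z_{j+1}$ its out-edge. So the set of completions is the same for all $2^r$ choices.

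Second, for the completions to split off as a common factor, the sign of a configuration must be multiplicative. Kasteleyn signs and the monomials $\chi$ are multiplicative over edges. The only coupling is $(-1)^{|V|-\ell(\gamma)}$, where $\ell(\gamma)=\gcd(k/2,t)+(\text{number of trivial loops and double edges})$ and $t$ is the total $w$-degree (the non-trivial loops are parallel and primitive, as in Proposition~\ref{rootsT}). The missing observation is that $(-1)^{\gcd(k/2,t)}$ equals $(-1)^t$ if $k/2$ is even and $-1$ if $k/2$ is odd. So toggling one strip always changes the sign by a fixed factor. This is the general form of the step ``$m$ even, so $d$ and $k$ have the same parity''.

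These two steps give $P_{\mathrm{side}}(w)=R(w)\prod_{j=1}^r L_j(w)$ with binomials $L_j$. It is then the hypothesis $N(\Gamma)=\mathcal N$, which you use only to identify the direction of the side, that makes $R$ a nonzero monomial. Indeed $P_{\mathrm{side}}\neq 0$, its support lies on a side of lattice length $r$, and $\prod_j L_j$ already has span $r$.

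Your fallback cannot replace this. Lemmas~\ref{nonvanishing_coeffs} and~\ref{ext_homology} control only the two end coefficients. Inconsistent signs in the middle coefficients would destroy the product form, and ``up to a sign'' on the roots does not absorb that. The Fisher computation of Theorem~\ref{rootsF} shows this step really must be argued. There a vertical zig-zag path has more than $k$ edges, the optional double edges depend on the $a$/$c$ choice, and the roots become $\pm\prod_i (a_{i,j}-a_{i,j}^{-1})/(c_{i,j}-c_{i,j}^{-1})$.
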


From Corollary \ref{N < zigzag polygon}, we know that the Newton polygon is contained in the polygon obtained from the homologies of zig-zag paths. However, we do not know whether the opposite direction holds.

\subsection{Hexagonal Fisher graphs}

We now consider Fisher graphs that are obtained from the hexagonal lattice by replacing each vertex with a triangle. Again, we are interested in general $m \times n$ Fisher graphs, which we denote by $F_{m, n}$. The smallest fundamental domain is shown in Figure \ref{fig:Fisher1x1} and has six vertices. Therefore, $F_{m,n}$ has $6mn$ vertices in total. Spectral curves of Fisher graphs were previously studied by Li~\cite{ZLi}, Cimasoni, and Duminil-Copin~\cite{CimasoniDuminilCopin}.

The Newton polygon of $F_{m,n}$ is the same as for the triangular lattice $T_{m,n}$, but reflected over the $w$-coordinate. See Figure \ref{fig:Fisher2x3NP}. Again, our goal is to determine the roots of the polynomials that correspond to each side of the Newton polygon. We start with the vertical side of the Newton polygon, and we denote by $P_R(w)$ the polynomial that collects all the monomials of the form $z^m w^j$ where $-n \leq j \leq 0$. Before we proceed to study the marginal polynomials, we need an additional statement about the loops that correspond to the extremal points of the Newton polygon.

\begin{figure}[h!]
    \centering
    \includegraphics[width=0.85\textwidth]{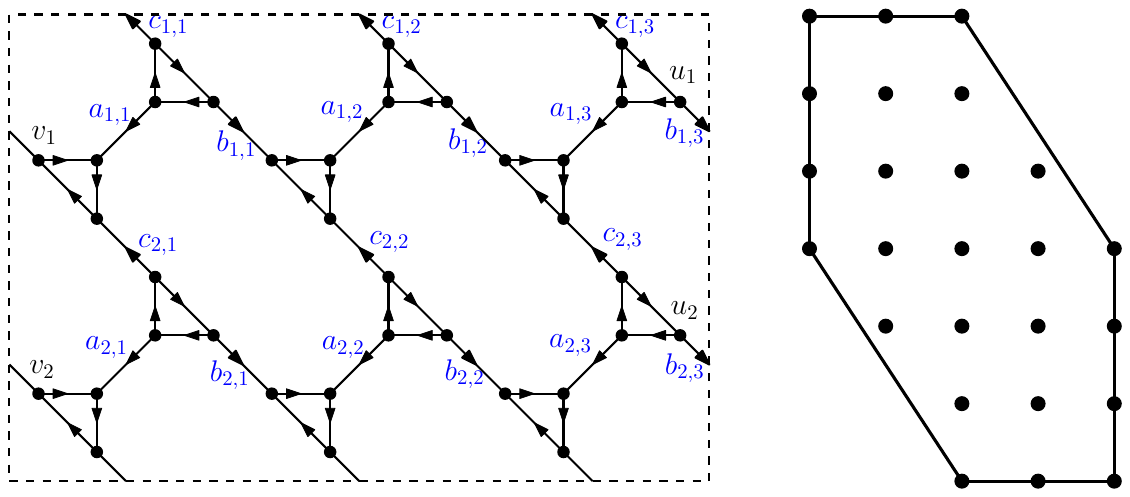}
    \caption{\label{fig:Fisher2x3NP} The Fisher graph $F_{2,3}$ and its Newton polygon.}
\end{figure}

\begin{lem} \label{no double edges}
    Let $\gamma \in \mathcal{L}(F_{m,n})$ be a collection of loops with a realizable homology $(m,i)$ for any $-n \leq i \leq 0$. Then $\gamma$ satisfies the following properties:
    \begin{enumerate} [(a)]
        \item The non-trivial loops in $\gamma$ traverse all $b$-edges of $F_{m,n}$;
        \item The only trivial loops in $\gamma$ are double edges traversing an $a$- or $c$-edge;
        \item For every column of $F_{m,n}$, the non-trivial loops in $\gamma$ traverse either all $a$-edges or all $c$-edges.
    \end{enumerate}
\end{lem}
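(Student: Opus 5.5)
Throughout, I fix the convention that $F_{m,n}$ is the $m\times n$ fundamental domain of the hexagonal Fisher graph. Each hexagonal vertex is a triangle of weight-one short edges, and the long edges are labelled $a$, $b$, $c$ as in Figure \ref{fig:Fisher1x1}. The fundamental domain is drawn so that the right boundary is crossed only by $b$-edges, exactly $n$ of them per column boundary, and so that, as in $F_{1,1}$, the first coordinate of the homology increases only through $b$-edges. The plan is a counting argument on the vertices of the triangles, using Observation-type arguments from the proof of Lemma \ref{N_delta < N}.

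\textbf{Part (a).} For $\gamma$ of homology $(m,i)$, the non-trivial loops are parallel and, by realizability, all carry the same homology class $(p,q)$. Their total horizontal displacement is $m$. The $x$-coordinate changes only along $b$-edges, and a loop can traverse a $b$-edge either rightwards or leftwards. I would show that each column of triangles can be crossed rightwards by at most one edge per horizontal $b$-strand. The reason is that a triangle has three vertices, and two of them cannot both use outgoing long edges; this is the argument already given in the proof of Lemma \ref{N_delta < N}. So the net rightward count per column boundary is at most $n$ crossings, each a distinct $b$-edge. The Newton polygon has $z$-degree at most $m$, matching the vertex $(m,\cdot)$ of the hexagon. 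Attaining $m$ therefore forces every $b$-edge to be traversed, rightwards and in loops. A $b$-edge used as a double edge would contribute nothing horizontally and would lose one unit of displacement. The main work here is the bookkeeping with respect to the fundamental domain, where $m$ columns each contribute at most $n$ ``$b$-units'' in total; I expect this to follow from a flux count across each vertical cut between consecutive columns.

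\textbf{Part (b).} Once all $b$-edges are in the non-trivial loops, every triangle has exactly one vertex, the $b$-endpoint, whose long edge is used by a loop. The loop enters the triangle through the $b$-vertex. It must then leave through either the $a$-vertex or the $c$-vertex, passing along one or two short edges. If it uses a single short edge, the remaining vertex of the triangle is left over and must be covered by a double edge along its long edge, which is an $a$- or $c$-edge. If it uses the path through all three vertices, no double edge is needed. A trivial loop of length greater than $2$ would have to live in the complement of the non-trivial loops. By Lemma \ref{char_poly_properties}(b) such a loop must have even length, since odd ones cancel. The complement, however, consists only of leftover single vertices joined by $a$/$c$ edges, which form a matching. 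Hence the only trivial loops are double edges on $a$- or $c$-edges. Double edges on short edges are impossible, because each triangle's $b$-vertex is already used.

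\textbf{Part (c).} This is the step I expect to be the main obstacle, and it is a propagation argument like the one for $T_{m,n}$ in Proposition \ref{rootsT}. Within one column, consider consecutive triangles connected vertically by $a$- or $c$-edges. Suppose a loop exits a triangle through its $a$-vertex. Then the neighbouring triangle's corresponding vertex is entered by this edge. That vertex is not the neighbouring triangle's $b$-vertex, so the inflow constraint forces that triangle's loop to use its own $a$-exit as well. Otherwise two loop edges would enter the same triangle, or a vertex would have in-degree two. Inducting around the column, which closes up by periodicity, the loops traverse either all $a$-edges or all $c$-edges in that column, exactly as in the parallel-edges argument of Proposition \ref{rootsT}. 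I would verify the local case analysis on $F_{1,1}$, using the explicit matrix, before writing the general induction.
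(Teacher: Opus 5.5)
Your plan has the same three-step structure as the paper's proof: a cut argument for (a), a local analysis of how a non-trivial loop passes through a triangle for (b), and propagation around the vertical cycle of each column for (c).

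\textbf{Part (b)} is a mild variant and is fine. The paper deletes all endpoints of $b$-edges, notes that what remains is $n$ disjoint non-contractible cycles, and then excludes short-edge double edges separately. You instead remove the vertices of the non-trivial loops and observe that at most one vertex per triangle survives, so the survivors induce a matching of $a$- and $c$-edges. This settles both points at once. Two small corrections: the appeal to Lemma~\ref{char_poly_properties}(b) is unnecessary, and ``exactly one vertex whose long edge is used'' should read ``exactly one $b$-vertex'', since the exit vertex's long edge is also on the loop.

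\textbf{Part (c)} skips a step as phrased. The triangle entered through an $a$-edge leaves through its $b$-edge, not an $a$-exit. It is the next triangle, across the now unused $c$-edge, that is forced to leave through its $a$-edge. With that fix the mechanism is the paper's.

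\textbf{Part (a)} as written does not yet prove the claim. Each of the $n$ column cuts $B_k=\{b_{1,k},\dots,b_{m,k}\}$ contains $m$ edges, not $n$. The triangle argument from Lemma~\ref{N_delta < N} plays no role here: each triangle carries a single $b$-edge, and ``at most $|B_k|$ rightward crossings'' is trivial. With your counts, ``at most $n$ per boundary'' together with ``attaining $m$'' forces nothing. Also, the first homology coordinate is not a sum of per-column contributions but the net flux through any single vertical cut.

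The step that actually does the work is the one you only anticipate. The net signed number of traversals of $B_k$ by $\gamma$ is independent of $k$, so it equals the $z$-exponent $m$ read off at $B_n$. This follows from conservation of the oriented loops on the cylinder between the cuts $B_k$ and $B_n$, since double edges contribute zero net flux. The paper phrases it differently: $\gamma$ contains $m$ vertex-disjoint segments running from the heads of the $B_n$-edges to the next tails, and each must cross $B_k$ because deleting $B_k\cup B_n$ disconnects the graph. Since $|B_k|=m$, every $b$-edge is then traversed rightwards by a non-trivial loop, which is what (b) and (c) rely on.
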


\begin{proof}
    \begin{enumerate}[(a)]
        \item We start by partitioning the set of $b$-edges into $\{B_1, \dots, B_n\}$, where $B_k = \{b_{1,k}, \dots, b_{m,k}\}$ for all $1 \leq k \leq n$.  In order to obtain a monomial that contains $z^m$, we need all the edges of $B_n$ in the collection of oriented loops. We denote by $u_iv_i$ the edge whose weight is equal to $b_{i,n}$, such that $\gamma$ traverses all the edges $u_i \to v_i$. See Figure \ref{fig:Fisher2x3NP}. Then, we want to show that consequently, all the edges of $B_k$ are contained in the loops of $\gamma$ for all $1 \leq k \leq n-1$.

        Notice that by deleting the edge sets $B_k$ and $B_n$, the graph becomes disconnected. Moreover, notice that in $\gamma$ from each $v_i$ for $1 \leq i \leq m$, there is a path that goes to some $u_j$, for $1 \leq j \leq m$. Every such path has to go through at least one of the edges in $B_k$. Since the loops are disjoint and there are exactly $m$ paths joining $(v_i)_{1 \leq i \leq m}$ and $(u)_{1 \leq j \leq m}$, it follows that all the edges from $B_k$ appear in $\gamma$. So, we can conclude that all $b$-edges are visited by $\gamma$. 
        \item Suppose there is a trivial loop of length greater than $2$ in $\gamma$. From part (a), we know that such a loop does not visit any of the vertices covered by $b$-edges. If we delete all the vertices incident to $b$-edges, we obtain a graph that consists of $n$ vertex-disjoint cycles. The only loops contained in this graph are the non-trivial loops.
        
        Suppose that $\gamma$ contains a double edge traversing an edge of some triangle. Observe that every triangle contains a vertex incident to a $b$-edge. Hence, the double edge must traverse the edge connecting the other two vertices of the triangle. However, this is not possible because the corresponding $b$-edge lies on a non-trivial loop, and it must visit at least one of these vertices.
        \item We only consider the first column, as the remaining colums can be treated analogously. Observe that any non-trivial loop passing through edge $b_{1,1}$ must also traverse either $a_{1,1}$ or $c_{1,1}$. It cannot traverse both, since this would require including two additional edges of the triangle, resulting in a vertex of degree $3$ in the collection of loops, which is not allowed.
        
        Suppose, without loss of generality, that the non-trivial loop traverses $c_{1,1}$ (the other case can be treated in the same way). Then there are two possible ways for the loop to pass through the corresponding triangle: it may use either one edge or two edges of the triangle (see Figure \ref{fig:doubleedgeFisher}). In both cases, the non-trivial loop traversing the edge $b_{1,n}$ must also traverse the edge $c_{2,1}$. Repeating this argument inductively, we obtain that a collection of loops must traverse all edges $c_{i,1}$ for $1 \leq i \leq m$.
    \end{enumerate}
\end{proof}

\begin{figure}[h!]
    \centering
    \includegraphics[width=0.8\textwidth]{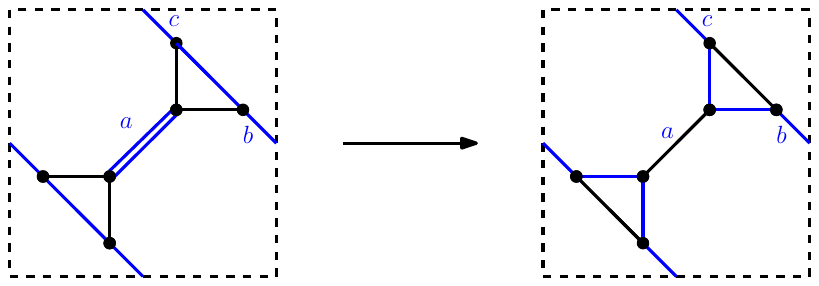}
    \caption{\label{fig:doubleedgeFisher} An operation that resolves double edges in the collections of loops.}
\end{figure}

Now, we express the roots of the corresponding marginal polynomials. 

\begin{thm} \label{rootsF}
    The polynomial $P_R(w)$ has $n$ real roots, and each root can be written as 
    \[r_j = (-1)^m \prod_{i = 1}^m \frac{a_{i,j}-a_{i,j}^{-1}}{c_{i,j} - c_{i,j}^{-1}},\]
    for $1 \leq j \leq n$.
\end{thm}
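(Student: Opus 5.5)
We follow the strategy of Proposition \ref{rootsT}: write $P_R(w)$ as a sum over collections of loops of homology $(m,i)$, $-n\le i\le 0$, and show that it factors into $n$ linear factors, one for each column $j$ of $F_{m,n}$. By Lemma \ref{no double edges}, every contributing $\gamma$ has three features. Its non-trivial loops use all $b$-edges. In each column $j$ they use either all $a_{\cdot,j}$-edges or all $c_{\cdot,j}$-edges. All other vertices are covered by double edges on $a$- or $c$-edges. Hence the choices column by column are independent, and the sum over $\gamma$ splits as a product over $j$. Choosing the $c$-edges in column $j$ shifts the $w$-exponent by $-1$ relative to the $a$-choice, so each factor has the form $A_j - C_j w^{-1}$ up to normalization.

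Next we compute each column factor. Fix the choice in column $j$. The vertices of that column not visited by the non-trivial loops must be covered by double edges. For each triangle, the non-trivial loop passes through it using either one short edge or two short edges (the two configurations in Figure \ref{fig:doubleedgeFisher}). When it uses only one, the remaining vertex is covered by the double edge along the long edge of the other type. We sum over these local configurations for each triangle independently. I expect the "$a$-choice" to give a local factor per row of the form $a_{i,j}\pm c_{i,j}^2 a_{i,j}$. More precisely, the loop goes through $a_{i,j}$, and the $c$-edge either is not used or appears as a doubled edge contributing $\nu(c)\nu(-c)=-c^2$, up to sign. Symmetrically, the "$c$-choice" gives $c_{i,j}(1-a_{i,j}^2)$. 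Then
\[
\frac{a_{i,j}(1-c_{i,j}^2)}{c_{i,j}(1-a_{i,j}^2)}=\frac{c_{i,j}^{-1}-c_{i,j}}{a_{i,j}^{-1}-a_{i,j}},
\]
which is consistent with the claimed root, since the root of a factor $A_j-C_jw^{-1}$ sits at $w=C_j/A_j$. This matches the $F_{1,1}$ case, where the coefficient of $z$ is $ab(1-c^2)$ and that of $z/w$ is $bc(1-a^2)$. That check is a useful sanity test for the signs.

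The main obstacle is the sign bookkeeping. Three sources of sign interact: the Kasteleyn signs $\kappa(e)$ coming from the orientation of Figure \ref{fig:Fisher2x3NP}; the sign $-1$ produced by each double edge, since $\nu(e)\nu(\bar e)=-\wt(e)^2$; and the global factor $(-1)^{|V|-\ell(\gamma)}$ from Lemma \ref{char_poly_properties} (a). Along the way we must check that the extra short-edge choices within a triangle change $\ell(\gamma)$ and $|E(\gamma)|$ consistently, so that the two configurations in Figure \ref{fig:doubleedgeFisher} combine into $1-c^2$ and do not cancel. For the global sign I will mimic the argument of Proposition \ref{rootsT}. The monomial $z^mw^{-k}$ comes from $d=\gcd(m,k)$ parallel loops. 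Together with the parity of the number of double edges and of the lengths of the loops, which are now odd-length-sensitive because the loops pass through triangles, this should show that switching one column from $a$ to $c$ multiplies the sign by a fixed factor. Tracking that factor across the $m$ rows of the column produces the $(-1)^m$ in $r_j$. Once each factor is identified, real-rootedness is immediate, since all weights are real. The roots are nonzero and finite exactly when $a_{i,j},c_{i,j}\neq 1$, which we take as the generic case.
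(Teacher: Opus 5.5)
Your proposal follows essentially the same route as the paper. Both arguments factor $P_R(w)$ column by column via Lemma~\ref{no double edges}, obtain the per-row factors $a_{i,j}(1-c_{i,j}^2)$ and $c_{i,j}(1-a_{i,j}^2)$ from detours versus double edges, and use the $\gcd(m,k)$ loop-count sign, which contributes a fixed factor $-(-1)^m$ for each column switched to the $c$-edges.

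Two small corrections for when you write it out. First, the detour/double-edge choice is made once per unused long edge (both triangles it joins must agree), not independently per triangle. Second, the $(-1)^m$ in $r_j$ comes from combining that factor with the Kasteleyn signs along the column, since loop lengths do not enter $(-1)^{|V|-\ell(\gamma)}$.
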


\begin{proof}
    By Lemma \ref{no double edges} (a), when we describe the polynomial $P_R(w)$, we know that all $b$-edges are included in the collections of loops. Therefore, we can independently look at each column of the graph $F_{m,n}$. Also, since we are describing the right boundary of the Newton polygon, this implies that all the $b$-edges are oriented from left to right. It is enough to describe the first column, and the remaining columns are done analogously. Similarly to the triangular lattice, besides the chosen Kasteleyn orientation on $F_{m,n}$, by \ref{char_poly_properties} (a) the number of loops in a given collection affects the sign of the contributions of the collections of loops. 

    To describe the coefficient of the monomial $w^k$ in $P_R(w)$ for $k \in \{-n, \dots, 0\}$, we need to describe all $\gamma \in \mathcal{L}(F_{m,n})$ contributing to this monomial with homology $(m,k)$. The number of non-trivial loops in $\gamma$ is $k$. By Lemma \ref{no double edges}(b), $\gamma$ contains no trivial loops other than double edges. Therefore, $\ell(\gamma) = d_1 + d_2$, where $d_1$ is the number of non-trivial loops in $\gamma$ and $d_2$ is the number of double edges in $\gamma$. Note that we can express $d_1 = \gcd(m,k)$. Therefore $(-1)^{d_1} = -1$ if $m$ is even, and $(-1)^k$ if $m$ is odd.
    By Lemma \ref{char_poly_properties} (a), the sign from the number of loops is equal to $(-1)^{|V(\Gamma)| - \ell(\gamma)} = (-1)^{6mn - d_1 - d_2} = (-1)^{d_1+d_2}$. Whenever we choose all $c$-edges in a given column, then we multiply the weights by $-(-1)^m$.

    By Lemma \ref{no double edges} (c), we can consider these two disjoint cases.

    \begin{enumerate}
        \item all $c$-edges are contained in the non-trivial loops.

        The contribution of the edge with weight $c_{1,1}$ is equal to  $-(-1)^m (-c_{1,1}w^{-1}) = (-1)^m c_{1,1}w^{-1}$, where the factor $-(-1)^m$ comes from the number of loops and $-c_{1,1}w^{-1}$ since the edge is oriented from left to right (the same as the corresponding $b$-edge). 
        
        Note that by putting a double edge on some $a$-edge, the number of loops increases by one. From this and the corresponding Kasteleyn signs, we get the following expression: 
        \begin{align*}
            (-1)^mc_{1,1}w^{-1}(1 - a_{1,1}^2)(-c_{2,1})(1 - a_{2,1}^2) \dots (-c_{m,1})(1 - a_{m,1}^2) =\\
            -(-1)^mc_{1,1} \dots c_{m,1} (a_{1,1}^2-1) \dots (a_{m,1}^2-1)w^{-1}.
        \end{align*}
        \item all $a$-edges are contained in the non-trivial loops.

        By similar analysis, there is a choice whether a $c$-edge is in a double edge or not contained in any loop. We get the following product:
        \[a_{1,1} \dots a_{m,1} (c_{1,1}^2-1) \dots (c_{m,1}^2-1).\]
        Note that by taking a double edge on $c_{1,1}$, the factors $w$ and $\frac{1}{w}$ cancel.

        So, we can now describe the first root $r_1$ from these two identities:

        \begin{align*}
            r_1 &= (-1)^m \frac{c_{1,1} \dots c_{m,1} (a_{1,1}^2-1) \dots (a_{m,1}^2-1)}{a_{1,1} \dots a_{m,1} (c_{1,1}^2-1) \dots (c_{m,1}^2-1)} \\
            &= (-1)^m \frac{(a_{1,1}-a_{1,1}^{-1}) \dots (a_{m,1}-a_{m,1}^{-1})}{ (c_{1,1}-c_{1,1}^{-1}) \dots (c_{m,1}-c_{m,1}^{-1})}.
        \end{align*}
    \end{enumerate}

    Following the same procedure for the other columns of the Fisher graph $F_{m,n}$, we obtain the remaining roots of the polynomial $P_R$.
\end{proof} 

As it was the case for the triangular lattice, these roots again have combinatorial meaning. Each root represents an alternating product along the zig-zag paths whose homology classes are primitive edge vectors of the corresponding sides of the Newton polygon. As opposed to the triangular lattice, here each weight $\wt$ is replaced by $\wt - \wt^{-1}$. 

\begin{thm} \label{real rooted marginals}
    Let $\Gamma$ be an isoradial bipartite torus graph, and let $S \subseteq V(\Gamma)$ be a subset of vertices of degree $3$. Then the marginal polynomials of $N(\Gamma_{\Delta_S})$ are real-rooted.
\end{thm}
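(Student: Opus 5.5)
We reduce to a single vertical side and show that its marginal polynomial factors into linear terms with real coefficients, one factor per edge of the zig-zag path spanning that side. By Theorem~\ref{equal newton polygons delta}, $N(\Gamma_{\Delta_S}) = N(\Gamma)$. Fix a side of this polygon. By Remark~\ref{sl2_NP} and Lemma~\ref{vertical zig-zag path}, we may assume the side is vertical and lies at $z$-degree $k/2$, where $k=\sum_i \max(\alpha_i,0)$. We may also choose the fundamental domain so that the vertical boundary crosses exactly the $k$ edges of a zig-zag path $Z_1$ of homology $(0,1)$ (Lemma~\ref{fundamental_domain matching}), with all triangles of $\Delta_S$ away from the boundary. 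The marginal polynomial is then $P_R(w)$, collecting the monomials $z^{k/2}w^j$. Every loop collection $\gamma$ contributing to it must use, crossing the boundary from left to right, a maximal set of $k/2$ edges of $Z_1$. As in the proof of Lemma~\ref{N_delta < N}, no triangle has two outgoing edges.

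\textbf{Step 1: the crossing edges.} Since consecutive edges of $Z_1$ share a vertex, a maximal family of pairwise vertex-disjoint edges of the closed path $Z_1$ (of even length $k$) consists of alternate edges. Hence there are exactly two choices, the ``even'' and the ``odd'' edges of $Z_1$. This is the analogue of the dichotomy in Proposition~\ref{rootsT} and Lemma~\ref{no double edges}(c).

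\textbf{Step 2: the strip.} I would cut the torus along the boundary and view the remaining graph as a cylinder, or strip, traversed horizontally by the non-trivial loops. I expect the realizable collections of homology $(k/2,j)$ to factor strip by strip: the cylinder decomposes into ``slices'' between consecutive lifts of the zig-zag paths of positive slope. In each slice the loops have a binary choice, exactly as with the columns in Proposition~\ref{rootsT} and Theorem~\ref{rootsF}, contributing either $A_t$ or $B_t w^{\pm1}$. Here $A_t,B_t$ are polynomials in the edge weights, built from products of weights along the two sides together with factors of the form $(1-\wt^2)$ from optional double edges on triangle edges, as in the proof of Theorem~\ref{rootsF}. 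Isoradiality (each zig-zag path of positive slope meets $Z_1$ exactly $\alpha$ times, as shown in Lemma~\ref{fundamental_domain matching}) is what should make the slices independent. The target is a factorization
\[
P_R(w)=\pm\prod_{t}\bigl(A_t-\epsilon_t B_t w\bigr).
\]

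\textbf{Step 3: signs.} I would track signs via Lemma~\ref{char_poly_properties}(a). The number of non-trivial loops is $\gcd$ of the homology coordinates, and the double edges each add one loop. I then check, as in Proposition~\ref{rootsT}, that switching a single slice changes the global sign by a fixed $\epsilon_t\in\{\pm1\}$. Because the weights are real, every factor is linear with real coefficients, so all roots $A_t/(\epsilon_t B_t)$ are real. This proves real-rootedness. Degenerate factors with $B_t=0$ only lower the degree and do not create complex roots.

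\textbf{Main obstacle.} The hard step is Step 2 for general isoradial $\Gamma$ and arbitrary $S$, namely proving that the contributing collections decompose into independent binary choices. In particular, I must show that no trivial loop other than a double edge occurs, which is the analogue of Lemma~\ref{no double edges}(b) and uses Lemma~\ref{char_poly_properties}(b). I must also show that the triangle configurations inside a slice reduce to the two local patterns of Figure~\ref{fig:doubleedgeFisher}. I would first try the following: use bipartiteness of $\Gamma$ to see that between two consecutive crossing edges, the path forced through a triangle $\Delta_v$ either takes one or two triangle edges. These two options combine with the optional double edge into a factor of the form $\wt$ versus $\wt(1-\wt'^2)$. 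The remaining sum over slice choices then telescopes into the claimed product.
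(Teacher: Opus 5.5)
Your Step~1 is essentially fine, but Step~2 has a structural gap, not just a missing proof. You index the factorization by the wrong objects.

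\textbf{The wrong decomposition.} The marginal polynomial of the vertical side has degree equal to the lattice length $r$ of that side. That is the number of zig-zag paths $Z_1,\dots,Z_r$ of homology $(0,1)$. Your product instead has one factor per edge of $Z_1$, equivalently one per slice cut out by the transverse zig-zag paths with first coordinate $\alpha>0$. That gives $k$ factors, not $r$.
\begin{itemize}
\item Your own Step~1 already rules out independence of these slices. The even/odd choice on $Z_1$ is a single binary choice shared by all $k$ of them.
\item The triangular lattice of Proposition~\ref{rootsT} makes this concrete. There $k=2m$ but $P_R$ has degree $n$. The choice between $a$- and $b$-edges is made once per column, i.e.\ once per zig-zag path of homology $(0,1)$. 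It is propagated along the whole column: the choice at $v_{1,n}$ forces the one at $v_{2,n}$, and so on.
\item Since you only ever work with the single path $Z_1$, your argument can at best detect a side of lattice length one.
\end{itemize}

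\textbf{The missing idea: use all $r$ parallel zig-zag paths.} Each $Z_i$ is homologous to $Z_1$, so the reasoning of your Step~1 applies to each of them.
\begin{itemize}
\item A collection at $z$-degree $k/2$ must traverse every $Z_i$ through one of its two alternating edge sets. In $\Gamma_{\Delta_S}$, it uses one of only two linkage patterns of disjoint paths from $U_i^-$ to $U_i^+$. The other patterns are excluded by a disjointness argument, as in Lemma~\ref{no double edges}(c).
\item By isoradiality the $Z_i$ are pairwise edge-disjoint, so these $r$ choices are independent.
\item Take the horizontal boundary $\gamma_x$ to cross each $Z_i$ exactly once, with the triangles small. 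Then each block contributes a real polynomial $Q_i(w)$ of degree at most one.
\item This gives $P_{\mathrm{ver}}(w)=c\,Q_1(w)\cdots Q_r(w)$. The constant $c$ is free of $w$ because $P_{\mathrm{ver}}$ has degree $r$ by Theorem~\ref{equal newton polygons delta}. Real-rootedness follows.
\end{itemize}

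\textbf{Your Step~3 is still needed, but per $Z_i$ rather than per slice.} The sign $(-1)^{|V|-\ell(\gamma)}$ depends on the global number $\gcd(k/2,j)$ of non-trivial loops. So you must check that the total sign has the form $\pm\epsilon^{j}$ for the product to be legitimate. The paper's proof passes over this point quickly.
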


\begin{proof}
    Without loss of generality, up to an $SL_2(\mathbb{Z})$, we can assume that $N(\Gamma)$ contains a vertical side. Since $\Gamma$ is isoradial bipartite, we can apply Corollary \ref{NPs and homology classes} to conclude that $N(\Gamma)$ is described by the homology classes of the zig-zag paths in $\Gamma$. Therefore, we can assume that $\Gamma$ has $r \geq 1$ zig-zag paths $\{Z_1, \dots, Z_r\}$ with homology class $(0,1)$.

    Using Lemma \ref{vertical zig-zag path}, the graph $\Gamma$ can be embedded in the torus without changing its characteristic polynomial, such that there is a zig-zag path, say $Z_r$, where the vertical boundary, denoted by $\gamma_y$, of the fundamental domain crosses all the edges of $Z_r$ and no other edges of $\Gamma$. As discussed in Lemma \ref{fundamental_domain matching}, each zig-zag path $Z_i$, $1 \leq i \leq r$ has exactly $k$ vertices, where $k$ describes the width of $N(\Gamma)$. We also fix the horizontal boundary $\gamma_x$ to be the curve that passes through every $Z_i$, for $1 \leq i \leq r$, exactly once. This is possible because every pair of zig-zag paths with homology $(0,1)$ is edge-disjoint. This choice of $\gamma_x$ and $\gamma_y$ does not affect $N(\Gamma)$. 

    Following the notation of Proposition \ref{rootsT}, we call the edges of $Z_i$ that follow immediately after the left (resp. right) turn $a$-edges (resp. $b$-edges). In order to describe the marginal polynomial attached to the vertical side of $N(\Gamma)$, we need to choose either all $a$-edges or all $b$-edges of the zig-zag path $Z_r$. In order to traverse the loops of total homology $k/2$ through $\Gamma$, we need to make the same choice on every zig-zag path $Z_i$. For each of them, we can choose either all $a$-edges or all $b$-edges, and by the isoradiality of $\Gamma$, these zig-zag paths are edge-disjoint. See the left part of Figure \ref{fig:Zi_in_Gamma_Gamma_DeltaS}. Therefore, the choice can be made independently for each zig-zag path $Z_i$, $1 \leq i \leq r$. 

    \begin{figure}[h!]
    \centering
    \includegraphics[width=0.65\textwidth]{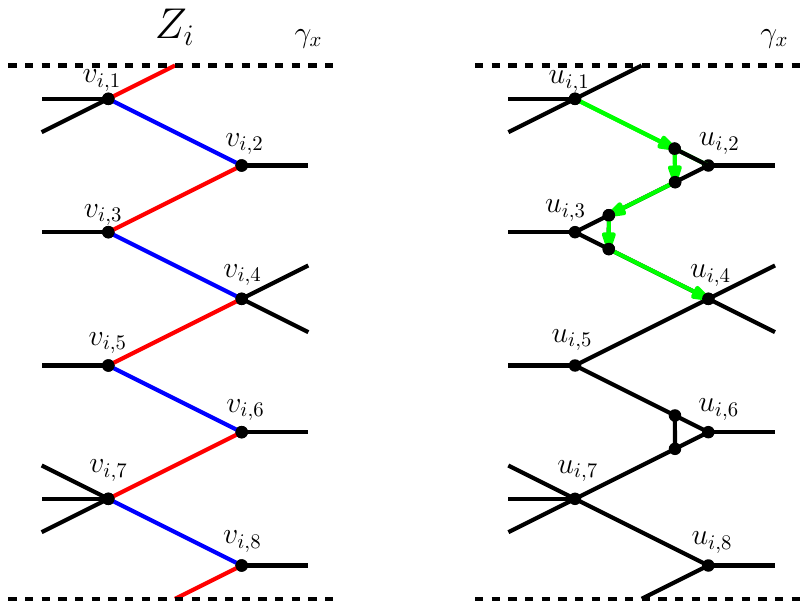}
    \caption{\label{fig:Zi_in_Gamma_Gamma_DeltaS} Left: the portion of the graph $\Gamma$ containing the zig-zag path $Z_i$ with homology $(0,1)$, its $a$-edges are depicted in blue, and $b$-edges are in red. The curve $\gamma_x$ is chosen such that it intersect every zig-zag path $Z_i$ exactly once. Right: the portion of the graph $\Gamma_{\Delta_S}$ obtained by replacing the vertices $v_{i,2}, v_{i,3}, v_{i,6}$ with triangles. The forbidden path from $u_{i,1}$ to $u_{i,4}$ is shown in green.}
    \end{figure}

    We now want to describe the marginal polynomials of $N(\Gamma_{\Delta_S})$. First, notice that by Theorem \ref{equal newton polygons delta}, we have $N(\Gamma_{\Delta_S}) = N(\Gamma)$. If $r = 1$, then the result follows because the marginal polynomial has degree $1$ and is always real-rooted. Therefore, we assume that $r \geq 2$.

    We denote by $V(Z_i) = \{v_{i,1}, \dots, v_{i,k}\}$ the vertex set of $Z_i$ for every $1 \leq i \leq r$. Take $v_{i,j} \in V(\Gamma)$, for $1 \leq j \leq k$. 
    If $v_{i,j} \notin S$, then we denote by $u_{i,j}$ the corresponding vertex in $V(\Gamma_{\Delta_S})$. If $v_{i,j} \in S$, then $u_{i,j}$ is the unique vertex in the triangle $\Delta_{v_{i,j}}$ that is not incident to any edge in $Z_i$ of the initial graph $\Gamma$. See the right part of Figure \ref{fig:Zi_in_Gamma_Gamma_DeltaS}.

    In order to describe the collections of loops in $\Gamma_{\Delta_S}$ whose first coordinate of the homology class is equal to $k/2$, instead of choosing $a$-edges and $b$-edges in $Z_i$, we need to consider the disjoint oriented paths going from $U^-_{i} = \{u_{i,1}, u_{i, 3}, \dots, u_{i, k-1}\}$ to $U^+_{i} = \{u_{i,2}, u_{i, 4}, \dots, u_{i, k}\}$. These paths are chosen such that the rest of the graph can be completed into a collection of loops that is realizable in $\mathcal{L}(\Gamma_{\Delta_S})$.

    It suffices to consider the $k/2$ oriented disjoint paths from $U^-_{i}$ to $U^+_{i}$ and double edges in order to cover all the vertices in $\Delta_{v_{i,j}}$. This covers the part of the graph $\Gamma_{\Delta_S}$ that is obtained from $Z_i$. By Lemma \ref{N(Gamma) < N(Gamma_Delta_v)}, using the original loops of $\Gamma$, we can find appropriate loops in $\Gamma_{\Delta_S}$. We denote by $P_{U^-_{i} \to U^+_{i}}(w)$ the characteristic polynomial describing these oriented paths and double edges.
    
    The only two ways in which these paths can be connected are $u_{i,1} \rightarrow u_{i,2}$, \dots, $u_{i,k-1} \rightarrow u_{i,k}$ or $u_{i,1} \rightarrow u_{i,k}$, \dots, $u_{i,k-1} \rightarrow u_{i,k-2}$. For example, if there is a path from $u_{i,1}$ to $u_{i,4}$, then there is no vertex in $U^-_{i}$ that can be connected to $u_{i,2}$ by a disjoint path. See the right part of Figure \ref{fig:Zi_in_Gamma_Gamma_DeltaS}. With similar reasoning, we can rule out the other options and notice that the only two possibilities are the ones we stated. When all the vertices of $Z_i$ are in $S$, this corresponds to the same argument we had in Lemma \ref{no double edges} c).

    Recall that we chose the curve $\gamma_x$ so that it intersects every $Z_i$ exactly once. By assuming that the added triangles are sufficiently small, $\gamma_x$ intersects only one of the edges in the corresponding portion of the graph $\Gamma_{\Delta_S}$. See Figure \ref{fig:Zi_in_Gamma_Gamma_DeltaS}. This implies that the degree of the polynomial $P_{U^-_{i} \to U^+_{i}}(w)$ is at most one for all $1 \leq i \leq r$. By Theorem \ref{equal newton polygons delta}, it must be equal to one.

    Using similar reasoning as in Lemma \ref{N(Gamma) < N(Gamma_Delta_v)}, we obtain that starting from a collection of loops in $\Gamma$, we can always obtain a collection of loops in $\Gamma_{\Delta_S}$ that uses the two possible collections of paths between $U^-_{i}$ and $U^+_{i}$. Let $P_{ver}(w)$ denote the marginal polynomial describing the vertical side of $N(\Gamma_{\Delta_S})$. We can write it as
    \[P_{ver}(w) = c P_{U^-_{1} \to U^+_{1}}(w) \dots P_{U^-_{r} \to U^+_{r}}(w).\]
    Here, $c$ is the constant that depends on the edge weights of the rest of the graph not contained in any of the paths between $U^-_{i}$ and $U^+_{i}$. This indeed does not depend on the variable $w$, because otherwise, the degree of the polynomial $P_{ver}(w)$ would be greater than $r$, contradicting the fact that $N(\Gamma_{\Delta_S}) = N(\Gamma)$. Since for each $1 \leq i \leq r$, the polynomial $P_{U^-_{i} \to U^+_{i}}(w)$ has a real root, the polynomial $P_{ver}(w)$ is also real-rooted. We can repeat this argument for every side of $N(\Gamma_{\Delta_S})$.
\end{proof}

\section{Local moves for non-bipartite graphs} \label{local_moves}

In this section, we are interested in studying local moves, transformations of graphs that preserve the dimer partition function up to scale. Goncharov and Kenyon~\cite{GoncharovKenyon} showed that any two minimal bipartite torus graphs are related by sequences of two types of local moves called spider moves and expanding/contracting degree two vertices. See Figure \ref{fig:local_bip_moves} and observe that both of these moves remain valid for non-bipartite graphs. 

Let us introduce a new move, which requires a graph to be non-bipartite. In Figure \ref{fig:localmove1}, we define the \emph{$\lambda$-move}, which involves four vertices such that its internal vertex has degree three, and the remaining vertices can be connected to the rest of the graph (which can also include an edge between the vertices $v_2$ and $v_3$). 

\begin{figure}[h!]
    \centering
    \includegraphics[width=0.65\textwidth]{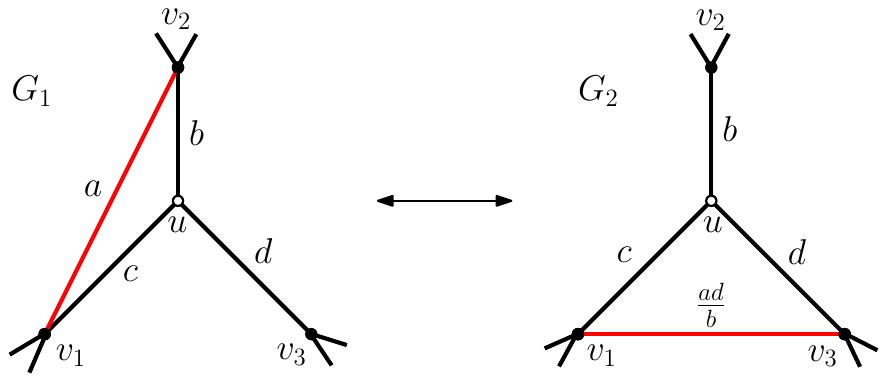}
    \caption{\label{fig:localmove1} $\lambda$-move, the central vertex $u$ has degree $3$ and the other vertices can be connected to the rest of the graph.}
\end{figure}

\begin{lem}
    A $\lambda$-move preserves the dimer partition function.
\end{lem}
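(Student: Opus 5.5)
The plan is to compare the two sides of the $\lambda$-move by conditioning on what a dimer cover does inside the local picture of Figure \ref{fig:localmove1}. Write $G$ for the graph before the move and $G'$ for the graph after it; they coincide outside the four vertices $u, v_1, v_2, v_3$ and the edges drawn in the figure. I am reconstructing the move from the caption only, so I assume the internal vertex $u$ is not connected to the rest of the graph, while $v_1, v_2, v_3$ may be, including a possible external edge $v_2v_3$. For any dimer cover $M$ of $G$ or $G'$, let $I(M) \subseteq \{v_1,v_2,v_3\}$ be the set of boundary vertices matched by an edge of the local gadget. The remaining vertices $\{v_1,v_2,v_3\}\setminus I(M)$ are then matched into the rest of the graph, possibly by the external edge $v_2v_3$.

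Step 1 is to decompose the partition function accordingly:
\[Z(G) = \sum_{I} W_I \, Z_I, \qquad Z(G') = \sum_{I} W'_I \, Z_I.\]
Here $Z_I$ is the dimer partition function of the outside graph with the vertices of $I$ deleted, which is the same on both sides. The quantities $W_I$ and $W'_I$ are the weighted numbers of perfect matchings of the gadget (with $u$, or the new internal structure, included) that cover exactly $I$. Since $u$ has degree $3$ and must be matched to exactly one $v_i$, on the original side $W_I$ is nonzero only for $|I| = 1$ (and possibly $|I| = 3$ if the gadget contains an edge among the $v_i$). Each such $W_I$ is a single edge weight, or a short sum of edge weights.

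Step 2 is to compute $W'_I$ for the gadget after the move in the same way. Because the move needs a non-bipartite graph, I expect the new gadget to contain a triangle or an edge among the $v_i$. It may therefore also produce terms with $|I| = 3$ or $|I| = 1$ through different local matchings, and I will list them all.

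Step 3 is to choose the new edge weights as functions of the old ones so that $W'_I = \lambda\, W_I$ for every $I$, with a single scalar $\lambda$. The definition of the move in the figure should supply these weights. Matching the coefficients of each $Z_I$ then gives $Z(G') = \lambda Z(G)$, which is the claim up to scale. For the Kasteleyn/Pfaffian version one would also check that signs are compatible, but since the statement concerns $Z$, which is a plain positive sum, no sign bookkeeping is needed here.

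The main obstacle is the case where the external edge $v_2v_3$ is present, or where the gadget itself produces configurations with $|I|$ of the wrong parity. In that situation the coefficients $Z_I$ for different $I$ are no longer independent, and one has to make sure no spurious local configuration survives on one side only. I would first verify the matching $W'_I = \lambda W_I$ for all subsets $I$, including $I = \emptyset$ and $|I| = 2$, where both sides should vanish. If some $W'_I$ with $|I| = 2$ turned out nonzero, I would look for a cancellation against a term with the same outside partition function $Z_I$, or otherwise refine the bijection at the level of individual matchings.
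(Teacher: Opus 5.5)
There is a genuine gap. Your boundary-conditioning framework is the right one, but the proposal never proves anything about this particular move. Every step that depends on the gadget is deferred to ``the definition of the move in the figure.'' Step 3 is also phrased as \emph{choosing} new weights so that $W'_I=\lambda W_I$, whereas the move comes with prescribed weights and the whole content of the lemma is to \emph{verify} that identity for them.

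Concretely, on the left of Figure~\ref{fig:localmove1} the vertex $u$ is joined to $v_1,v_2,v_3$ by edges of weights $c,b,d$. The gadget has one further edge $v_1v_2$ of weight $a$, so it is a triangle $uv_1v_2$ with the tail $uv_3$. The move deletes $v_1v_2$ and inserts $v_1v_3$ with weight $\frac{ad}{b}$, leaving the three edges at $u$ unchanged.

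The missing key observation is the following. Since $\deg u=3$, a cover using $v_1v_2$ is forced to use $uv_3$, giving local weight $ad$. After the move, a cover using $v_1v_3$ is forced to use $uv_2$, giving local weight $\frac{ad}{b}\cdot b=ad$. All other covers are literally the same on both graphs. In your notation this reads $W_{\{v_1\}}=W'_{\{v_1\}}=c$, $W_{\{v_2\}}=W'_{\{v_2\}}=b$, $W_{\{v_3\}}=W'_{\{v_3\}}=d$ and $W_{\{v_1,v_2,v_3\}}=W'_{\{v_1,v_2,v_3\}}=ad$. Hence $\lambda=1$: the lemma gives exact equality $Z(G)=Z(G')$, not equality up to an unspecified scale. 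This is precisely the paper's short argument, and without it your Steps 2 and 3 are empty.

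Your ``main obstacle'' paragraph also identifies a difficulty that does not exist. The decomposition $Z=\sum_I W_I Z_I$ holds whether or not the outside graph contains an edge $v_2v_3$; such an edge just contributes to $Z_I$ when $v_2,v_3\notin I$. Termwise equality $W'_I=\lambda W_I$ is sufficient without any independence of the $Z_I$. Moreover, because $u$ has no outside neighbours, the gadget part of any cover matches exactly $\{u\}\cup I$, so $|I|$ is automatically odd. The cases $I=\emptyset$ and $|I|=2$ cannot occur on either side, and no cancellation or refined bijection is ever needed.
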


\begin{proof}
    Denote by $G_1$ the initial graph and by $G_2$ the graph obtained by transforming $G_1$ according to the $\lambda$-move.

    First, notice that every dimer cover of $G_1$ that does not use the edge $v_1v_2$ can be identically mapped to the dimer cover of $G_2$ with the same weight. It remains to check the dimer covers that contain the edge $v_1v_2$. In that case, the internal vertex $u$ has to be matched with $v_3$. Thus, the total contribution of these two edges to the weight of the dimer cover is $ad$.
    
    On the other hand, we need to find a dimer cover in $G_2$ such that all the vertices $u, v_1, v_2, v_3$ are matched internally. The only way to match the vertex $v_2$ is to choose the edge $uv_2$, then $v_1$ and $v_3$ have to be matched. This gives the same weight $\frac{ad}{b} b = ad$ as for the graph $G_1$. Therefore, $G_1$ and $G_2$ have the same dimer partition function.
\end{proof}

The following move we treat is obtained by taking a standard spider move from Figure \ref{fig:local_bip_moves} and adding a red edge to both graphs to obtain a \emph{diagonal spider move}, as shown in Figure \ref{fig:nbspider}.

\begin{figure}[h!]
    \centering
    \includegraphics[width=0.7\textwidth]{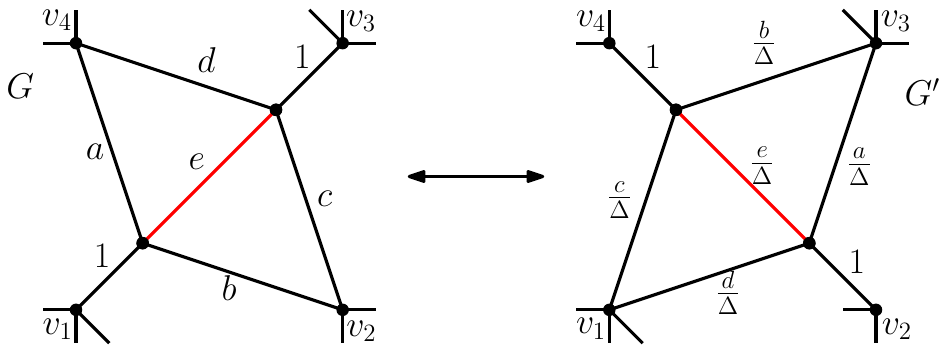}
    \caption{\label{fig:nbspider} A diagonal spider move, $\Delta := ac + bd$.}
\end{figure}

\begin{lem}
    Let $G'$ be the graph obtained by applying a diagonal spider move on a graph $G$. Then 
    \[Z(G') = \frac{1}{\Delta} Z(G),\]
    where $\Delta = ac + bd$.
\end{lem}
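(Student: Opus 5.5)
We argue exactly as in the proof for the $\lambda$-move: split the dimer covers of $G$ and $G'$ according to how the four boundary vertices of the local picture are matched, and compare the total weights class by class. Recall the standard spider move of Figure~\ref{fig:local_bip_moves}. The four outer vertices are $v_1,\dots,v_4$ in cyclic order. In $G$ they are attached to an internal edge, or pair of internal vertices, with weights $a,b,c,d$. In $G'$ the internal square has weights $a/\Delta, b/\Delta, c/\Delta, d/\Delta$, or the standard variant, and this is the normalization that yields the factor $1/\Delta$. For every subset $X\subseteq\{v_1,\dots,v_4\}$ of outer vertices matched inside the gadget, we compare the weighted number of internal completions in $G$ and in $G'$. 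This is the classical boundary-partition-function comparison for the spider move. Since the move is local, the external part of each dimer cover factors out. So it suffices to prove that, for every admissible boundary condition $X$, the internal weight in $G'$ equals $\frac{1}{\Delta}$ times the internal weight in $G$.

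\textbf{Step 1: classes without the red edge.} First consider the dimer covers that do not use the red diagonal edge. These are exactly the dimer covers of the underlying graphs of the ordinary spider move. On this class the identity $Z = \frac{1}{\Delta}Z$ per boundary condition is the known spider-move identity of Goncharov and Kenyon. We will verify it directly for the few admissible $X$. The internal vertices force $|X|$ to be $0$ or $2$, with $X$ consisting of a pair of adjacent outer vertices. We also check the class in which no outer vertex is matched inside, where the factor $\Delta$ appears from the two internal perfect matchings $ac$ and $bd$.

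\textbf{Step 2: classes using the red edge.} Next take the dimer covers that use the red edge, say joining $v_1$ and $v_3$, or the corresponding internal pair. Removing its two endpoints leaves a smaller configuration in both $G$ and $G'$, and we compute its internal completions explicitly. This step needs care about the red edge's weight in $G'$. We expect it to be rescaled, for instance by $1/\Delta$ or by a factor such as $ac/\Delta$, exactly so that this class also scales by $\frac{1}{\Delta}$. We will read the weight off Figure~\ref{fig:nbspider}, then check the one or two forced completions on each side: after deleting the endpoints of a diagonal, the gadget becomes a path, which has a unique matching.

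\textbf{Main obstacle.} The difficulty is the bookkeeping of the classes involving the red edge. A red edge present in both pictures is not a trivial add-on: its endpoints are internal in one graph and possibly outer in the other. We must therefore check that no boundary condition occurs in $G$ with the red edge that has no counterpart in $G'$, and conversely. Once each boundary class is shown to scale by $\frac{1}{\Delta}$, summing over all classes and over the external completions gives $Z(G') = \frac{1}{\Delta}Z(G)$.
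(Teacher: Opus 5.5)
Your proposal follows the paper's route: covers avoiding the red edge reduce to the ordinary spider-move identity. Covers using the red edge (weight $e$ in $G$, $e/\Delta$ in $G'$) have a forced completion inside the gadget with the same boundary condition on both sides, so this class also scales by $1/\Delta$.

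One small slip in Step 1: besides $X=\emptyset$ and adjacent pairs, the case where all four outer vertices are matched inside the gadget is also admissible, and your count of ``$0$ or $2$'' omits it. This does not create a gap, since the spider-move identity you cite covers that case.
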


\begin{proof}
    If we remove the edge with weight $e$ from the graph, we obtain the standard spider move for which the desired identity has been proven. It remains to check whether the dimer covers that contain the edges with respective weights $e$ and $\tfrac e \Delta$ have the same total weights up to a factor of $\Delta$.
    
    If $e$ is in a dimer cover, then the remaining four vertices in $G$ of Figure \ref{fig:nbspider} are connected externally to the rest of the graph. The same holds on the right side for the edge with weight $\frac{e}{\Delta}$. Thus, the dimer partition function is still equal to the partition function of the graph on the left divided by $\Delta$.
\end{proof}

\subsection{Local moves preserving characteristic polynomials}

We can also consider the moves that preserve the characteristic polynomial up to scale. In the bipartite case, the coefficients of this polynomial describe the partition function of dimer covers with prescribed homology classes when superimposed on a reference dimer cover. For non-bipartite graphs, the characteristic polynomial describes a particular loop model. Recall Lemma \ref{char_poly_properties} (a). It would be natural to find a relation between these two definitions. 

\begin{prop}
    If two torus graphs $\Gamma_1$ and $\Gamma_2$ are related by a  contraction/expansion of a degree $2$ vertex, a spider move, a diagonal spider move, or a $\lambda$-move, then 
    \[P_1(z,w) = c_0 P_2(z,w).\]
    Where $P_1(z,w)$ (resp. $P_2(z,w)$) is the characteristic polynomial of $\Gamma_1$ (resp. $\Gamma_2$) and $c_0$ is a constant. 
\end{prop}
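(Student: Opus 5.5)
Since each move is local, I will argue at the level of the modified Kasteleyn matrix $K(z,w)$, not the loop expansion. First I apply Lemma~\ref{vertical zig-zag path}-type deformations (isotopies of the fundamental domain) so that the local picture of the move, which lives in a disk, sits in the interior of the fundamental domain. Then every edge involved in the move has $\chi(e;z,w)=1$. The remaining edges, together with their weights, Kasteleyn signs and monomials, are identical in $\Gamma_1$ and $\Gamma_2$.

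I also need the Kasteleyn orientations to match. By a local re-orientation inside the disk I choose the orientations on the two sides so that the clockwise-odd rule holds on every face touched by the move. The orientation on the boundary of the disk is common to both graphs. Changing the orientation around a vertex multiplies a row and a column by $-1$, which leaves $\det K$ unchanged.

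With this set-up, each move becomes a statement about determinants of matrices that differ only in a small block. I plan the following steps in order.

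\textbf{Step 1: degree-$2$ contraction/expansion.} Let $v$ be a degree-$2$ vertex with neighbours $u_1,u_2$. I perform one step of Gaussian elimination, or equivalently a Schur complement, on the rows and columns of $v$ and $u_2$. This merges $u_1,u_2$ into a single vertex, and $\det K$ is multiplied by a nonzero constant (the product of the two edge weights, up to sign), independent of $z,w$.

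\textbf{Step 2: spider move.} I write $K=\begin{pmatrix}A&B\\ -B^T&D\end{pmatrix}$, where $A$ indexes the internal vertices of the spider. The block $A$ has no $z,w$ dependence. Hence $\det K=\det A\cdot\det(D+B^TA^{-1}B)$, and the Schur complement $D+B^TA^{-1}B$ is an effective skew-symmetric matrix on the four boundary vertices. The key step is to compute this Schur complement on both sides and check that they agree, using the weight relations with $\Delta=ac+bd$. The factors $\det A$ then give $c_0$. The diagonal spider move is the same computation: the extra edge $e$ (respectively $e/\Delta$) only adds an entry directly to the boundary block $D$, and this entry must match after accounting for the scaling.

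\textbf{Step 3: $\lambda$-move.} The internal vertex $u$ has degree $3$. I take the Schur complement over the single vertex $u$ together with one neighbour. Equivalently, I add a multiple of the row and column of $u$ to those of $v_1$, which creates or cancels the entry $v_1v_3$ and removes $v_1v_2$. This is exactly the weight identity $\tfrac{ad}{b}\cdot b=ad$, now checked at matrix level with signs.

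The main obstacle is the sign bookkeeping in Steps 2--3. The Schur complement produces effective boundary entries with specific signs, and these must agree with the Kasteleyn signs required on the other side of the move. The other side has different faces, so a priori the clockwise-odd rule constrains it differently. I would first try to fix explicit orientations on the figures and verify the clockwise-odd rule for each new face directly. If that is awkward, I would instead use the loop expansion of Lemma~\ref{char_poly_properties}(a) locally: loops crossing the disk enter and exit through the boundary vertices, so the partition-function identities already proven give equality of contributions per boundary connection pattern. The only remaining check is that the signature factor $(-1)^{|V|-\ell(\gamma)}$ changes by a global constant across the move.
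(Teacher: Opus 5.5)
Steps~1 and~3 work (for the $\lambda$-move, use the congruence form: add a multiple of row and column $u$ to row and column $v_1$). Working directly with $K(z,w)$ instead of the paper's case-by-case loop expansion is a legitimate alternative route. The gap is Step~2. It needs the internal block $A$ of the spider move to be invertible, and for the move used here it is not.

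In the spider move of Figure~\ref{fig:local_bip_moves}, the quadrilateral consists of two of the boundary vertices $v_i$ and two trivalent internal vertices. Each internal vertex has a leg to one of the other two $v_i$, and the move exchanges the roles of the two pairs. You can see this from the proof of the diagonal spider lemma: when the red edge is used, all four $v_i$ are matched externally, so that edge joins the only two internal vertices. Hence, for the ordinary spider move, $A$ is the zero $2\times 2$ matrix on both sides. Then $\det A=0$, $A^{-1}$ does not exist, and $\det K=\det A\cdot\det(D+B^TA^{-1}B)$ gives nothing. Your treatment of the diagonal move fails for the same reason: the extra edge sits in $A$, not in $D$. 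If it were a boundary entry, the two sides would differ by $e$ versus $e/\Delta$ in that entry, and the determinants would not be proportional.

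The repair stays within your framework but reverses the order: do the diagonal spider move first. There $A$ has off-diagonal entries $\pm e$, respectively $\pm e/\Delta$, so $\det A=e^2$, respectively $e^2/\Delta^2$. On both sides $B^TA^{-1}B$ is a skew block supported on all six pairs of $v_1,\dots,v_4$, with entries $1/e$, $\Delta/e$, $a/e$, $b/e$, $c/e$, $d/e$ up to sign. The two blocks agree once the Kasteleyn signs are matched. The $\Delta/e$ entry is where the clockwise-odd rule on the quadrilateral forces $ac$ and $bd$ to carry the same sign. This gives $\det K_1=\Delta^2\det K_2$ for all $e\neq 0$. Since both sides are polynomials in $e$, setting $e=0$ yields the ordinary spider move; deleting the diagonal keeps the orientation Kasteleyn.

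Alternatively, factor the spider move as urban renewal followed by two degree-$2$ contractions from Step~1. The side of urban renewal with legs has an invertible internal $4$-cycle block, so your Schur complement applies there. Your fallback through the loop expansion of Lemma~\ref{char_poly_properties}(a) does not bypass the problem. The dimer identities only control principal Pfaffian minors of the local matrix, while the loop expansion involves non-principal minors. Passing from one to the other is again a Schur-complement argument that needs an invertible block.
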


\begin{proof}
    We consider the four local moves separately.

    \begin{enumerate}[label=\Roman*.]
    \item \textbf{Contraction/expansion of a degree $2$ vertex.}
    
    Suppose that a vertex $v \in V(\Gamma_1)$ is replaced in $\Gamma_2$ by three vertices $v_1, v_2$ and $v_3$, such that $\deg(v_2) = 2$ and $\wt(v_1v_2) = \wt(v_2v_3) = 1$. See Figure \ref{fig:local_bip_moves} for a reminder of this move. Consider an oriented loop in $\Gamma_1$ that visits the vertex $v$. There are two cases.
    
    First, suppose that both neighbors of $v$ (in the case of a double edge, it is the same vertex) are adjacent to $v_1$ (resp. to $v_3$) in $\Gamma_2$. Then we keep the same oriented loop in $\Gamma_2$ and add a double edge on $v_2v_3$ (resp. $v_1v_2$), with contribution $1$.
    
    Second, suppose that one neighbor of $v$ is adjacent to $v_1$, and the other is adjacent to $v_3$. Then we can add the missing path $v_1 \rightarrow v_2 \rightarrow v_3$ (or $v_3 \rightarrow v_2 \rightarrow v_1$) to the oriented loop in $\Gamma_2$ without changing its weight.

    In both cases, the parity of the loop length is preserved. In particular, a trivial loop of even length in $\Gamma_1$ cannot become a trivial loop of odd length in $\Gamma_2$, which would not contribute to the characteristic polynomial by Lemma \ref{char_poly_properties} (b).
    
    \item \textbf{Spider move.}

    We call an \emph{internal path} (resp. double edge) a path (resp. double edge) that uses only the edges shown in Figure \ref{fig:nbspider}. We treat different cases depending on how external vertices $v_1, v_2, v_3, v_4$, shown in Figure \ref{fig:nbspider} (the weight $e = 0$), are connected to each other:

    \begin{enumerate} [label=\arabic*.]
        \item there is an internal path in $\Gamma_1$ between $v_1$ and $v_2$. Here, we distinguish three possibilities:
        \begin{enumerate} [(a)]
            \item there is a double edge adjacent to the vertex $v_3$ in $\Gamma_1$. So, the total contribution of this portion of the loop is equal to $b$. On the other hand, in $\Gamma_2$, we can have a path from $v_1$ to $v_2$ that passes through the vertex $v_3$, or a path from $v_1$ to $v_2$ and a double edge with weight $\frac{b^2}{\Delta^2}$ incident to $v_3$. This gives the contribution of $\frac{abc}{\Delta^3} + \frac{b^2d}{\Delta^3} = \frac{b}{\Delta^2}$.
            \item Symmetrically to the previous case, if there is an internal double edge adjacent to the vertex $v_4$ in $\Gamma_2$, whose weight is equal to $\frac{d}{\Delta}$. That corresponds to the path in $\Gamma_1$ from $v_1$ to $v_2$ that visits $v_4$ or a path from $v_1$ to $v_2$ and double edge on $v_4$ with weight $d^2$. The total weight is equal to $bd^2 + acd = d \Delta$.
            \item There is an internal path between $v_2$ and $v_3$, which gives a total contribution of $bc$. In $\Gamma_2$, we obtain a path from $v_1$ to $v_3$ and an internal double edge on $v_2$, with a total contribution of $\frac{bc}{\Delta^2}$.
            \item There is an internal path between $v_3$ and $v_4$. In $\Gamma_1$ the contribution is $bd$, while in $\Gamma_2$, it is $\frac{bd}{\Delta^2}$.
        \end{enumerate}
        \item There is an internal path between $v_1$ and $v_3$, not passing through $v_2$. Then it has to pass through the vertex $v_4$, and its contribution is $ad$. In $\Gamma_2$, this gives an internal double edge on $v_4$ and an internal path from $v_1$ to $v_3$, with contribution $\frac{ad}{\Delta^2}$.
        \item There is an internal double edge on $v_1$.
        \begin{enumerate} [(a)]
            \item There is a double edge on $v_3$. Its contribution is $1$. In $\Gamma_2$, this gives the following possibilities. A pair of double edges with total weight $\frac{a^2c^2}{\Delta^4}$, a pair of double edges with total weight $\frac{b^2d^2}{\Delta^4}$, or a trivial loop of length four with weight $\frac{abcd}{\Delta^4}$ with two possible orientations. So, the total contribution is $\frac{a^2c^2}{\Delta^4} + \frac{b^2d^2}{\Delta^4} + 2\frac{abcd}{\Delta^4} = \frac{(ac + bd)^2}{\Delta^4} = \frac{1}{\Delta^2}$.
            \item There is a double edge on $v_2$ (resp. $v_4$) with weight $c^2$ (resp. $d^2$). This gives two double edges on $v_1$ and $v_2$ (resp. $v_1$ and $v_4$) with weight $\frac{c^2}{\Delta}$ (resp. $\frac{d^2}{\Delta^2}$).
            \item There is a path from $v_2$ to $v_4$ in $\Gamma_1$ with weight $cd$. This corresponds to the path in $\Gamma_2$ from $v_2$ to $v_4$ and visits $v_1$ with total weight $\frac{cd}{\Delta^2}$.
        \end{enumerate}
        \item The vertex $v_1$ is not connected to any internal vertex. If there is a double edge adjacent to $v_3$, then this case is analogous to the cases $3(b)$ and $3(c)$. If $v_3$ is not internally connected, then it is analogous to the case $3(a)$. Finally, if there is a path between $v_3$ and $v_2$, or between $v_3$ and $v_4$, then it is analogous to the case $1(b)$
    \end{enumerate}

    \item \textbf{Diagonal spider move.}

    It is enough to consider the loops that pass through the edge with weight $e$ in Figure \ref{fig:nbspider}. Up to symmetry, there are three cases. 
    
    First, the edge $e$ is contained in the double edge of $\Gamma_1$. This corresponds to the double edge in $\Gamma_2$ with weight $\frac{e^2}{\Delta^2}$. 
    
    Second, a loop in $\Gamma_1$ contains an internal path from $v_1$ to $v_3$ that goes through the edges with weights $1$ and $e$, so the total weight is equal to $e$. In $\Gamma_2$, this corresponds to taking two loops, one with weight $\frac{ace}{\Delta^3}$ and the other with weight $\frac{bde}{\Delta^3}$. Their sum gives $\frac{e}{\Delta}$. 
    
    Third, up to symmetry, a loop in $\Gamma_1$ goes from $v_1$ to $v_2$ and uses the edge with weight $1, e, c$. In $\Gamma_2$, this is the path that uses the edges $\frac{c}{\Delta}, \frac{e}{\Delta}, 1$. We conclude that if $\Gamma_1$ and $\Gamma_2$ are related by a diagonal spider move, then \[P_1(z,w) = \frac{1}{\Delta^2} P_2(z,w).\]

    \item \textbf{$\lambda$-move.}

    It suffices to consider the loops containing the edge $a$ in $\Gamma_1$ in Figure \ref{fig:localmove1}. Up to symmetry, there are four possible cases:
    \begin{enumerate} [label=\arabic*.]
        \item a collection of loops in $\Gamma_1$ contains the double edges $a^2$ and $d^2$. This gives in $\Gamma_2$ the double edges $b^2$ and $\frac{a^2d^2}{b^2}$. Both contributions are equal;
        \item there is a loop in $\Gamma_1$ containing the path $v_1 \rightarrow v_2 \rightarrow u \rightarrow v_3$ with a total weight of $abd$. This corresponds to the path $v_1 \rightarrow v_3$ of weight $\frac{ad}{b}$ and the double edge $b^2$ in $\Gamma_2$. Both contributions are equal to $abd$;
        \item there is a loop in $\Gamma_1$ containing the path $v_1 \rightarrow v_2$ and the double edge $d^2$. This corresponds to the path $v_1 \rightarrow v_3 \rightarrow u \rightarrow v_2$ in $\Gamma_2$. Both contributions are equal to $ad^2$;
        \item there is a loop in $\Gamma_1$ containing the path $v_2 \rightarrow v_1 \rightarrow u \rightarrow v_3$ with total weight $acd$, which corresponds to the path $v_2 \rightarrow u \rightarrow v_1 \rightarrow v_3$ in $\Gamma_2$ whose weight is $\frac{bcad}{b} = acd$.
    \end{enumerate}
    \end{enumerate}
\end{proof}

\subsection{The non-planar local move}

We finish the section on local moves by going slightly beyond the scope of this paper. Namely, we discuss the \emph{cross move}, which can be seen in Figure \ref{fig:cross}. Depending on the connectivity of the boundary vertices $v_1,v_2,v_3,v_4$, the underlying graph on which the move is performed can be non-planar.
By taking the edge weight $f=0$, we recover the diagonal spider move that we discussed before. If we also take $e=0$, then we recover the standard spider move. We now show that this new move indeed preserves the dimer partition function up to a scale, which can be expressed.

\begin{figure}[h!]
    \centering
    \includegraphics[width=0.8\textwidth]{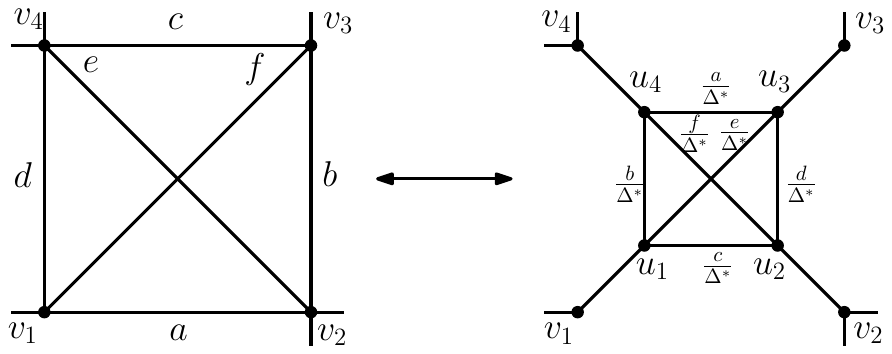}
    \caption{\label{fig:cross} A cross move, $\Delta^* := ac + bd + ef$.}
\end{figure}

\begin{lem}
    Let $G'$ be the graph obtained by applying a cross move on a graph $G$. Then 
    \[Z(G') = \frac{1}{\Delta^*} Z(G),\]
    where $\Delta^* = ac + bd + ef$.
\end{lem}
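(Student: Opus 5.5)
The cross move (Figure~\ref{fig:cross}) is, I assume, the spider move on the four external vertices $v_1,\dots,v_4$ with two extra edges. On the left there is an internal square with edge weights $a,b,c,d$ and two diagonal edges with weights $e,f$. On the right is the dual configuration: weights $a,b,c,d,e,f$ are divided by $\Delta^*$, and the roles of the opposite pairs are exchanged. I will follow the proofs of the spider move and the diagonal spider move above and compare dimer covers according to how the four external vertices $v_1,v_2,v_3,v_4$ are matched.

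For $S\subseteq\{v_1,\dots,v_4\}$, let $Z_S^{\mathrm{ext}}$ be the partition function of the rest of the graph when exactly the vertices of $S$ are matched externally. Then
\[Z(G)=\sum_S W_S\, Z_S^{\mathrm{ext}},\qquad Z(G')=\sum_S W'_S\, Z_S^{\mathrm{ext}},\]
where $W_S$ (resp. $W'_S$) is the total weight of internal matchings of the local gadget that cover every internal vertex and exactly the external vertices not in $S$. This decomposition holds whether or not the ambient graph is planar, because no Kasteleyn signs are involved and only perfect matchings are counted. It therefore suffices to prove $W'_S=W_S/\Delta^*$ for every $S$.

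The cases are finite. Parity rules out many of them: the internal vertices must be matched, so $|S|$ has a fixed parity relative to the number of internal vertices. In the remaining cases I will list the internal matchings.

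\textbf{Case $S=\emptyset$, or $S$ is everything.} With all four external vertices internally matched, the left side should give a sum $ac+bd+ef$ of three perfect matchings of the square-plus-diagonals gadget. The right side should give a single matching of weight $(ac+bd+ef)/\Delta^{*2}\cdot\Delta^*$-type. The identity $ac+bd+ef=\Delta^*$ is exactly what makes these agree, just as $ac+bd=\Delta$ does in the spider move.

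\textbf{Case $|S|=2$.} For each of the six pairs there is a unique local matching on each side. For adjacent pairs the weight is a single letter, and for the two opposite pairs it is $e$ or $f$. The right-hand weights are the same letters divided by $\Delta^*$. I will check these against the figure's weight assignment; the cases where $\{v_1,v_3\}$ or $\{v_2,v_4\}$ is matched externally are new compared with the diagonal spider move and involve the $f$-edge.

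The main obstacle is bookkeeping rather than ideas. I do not have the exact placement of the weights in Figure~\ref{fig:cross}, so I would first fix a labelling consistent with the degenerations $f=0$ (the diagonal spider move) and $e=f=0$ (the spider move). I would then verify the case $S=\emptyset$, since it is the only one where $\Delta^*$ arises as a genuine sum. Every other case would then reduce to the already-proven diagonal spider move by symmetry, swapping the roles of $e$ and $f$.
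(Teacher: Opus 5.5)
Your decomposition by the set $S$ of boundary vertices matched externally is essentially the paper's argument, and it is correct. The paper compares local matchings case by case: a cover using only the diagonal $e$ forces $v_2u_2$, $v_4u_4$, $u_1u_3$ in $G'$, of weight $e/\Delta^*$, and in the fully internal case $ac+bd+ef=\Delta^*$ on the left is matched by the four legs $u_iv_i$ of total weight $1$ on the right.

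Two corrections to your write-up. First, keep the two extreme cases apart. For $S=\emptyset$ the right side is only the four legs. For $S=\{v_1,\dots,v_4\}$ the left side is the empty matching, and the right side is the three-term sum $(ac+bd+ef)/\Delta^{*2}$ over the perfect matchings of $u_1,\dots,u_4$, one of which uses both diagonals of $G'$. So this is a second genuine use of the identity $ac+bd+ef=\Delta^*$, and it cannot be imported from the diagonal spider move, which is normalized by $\Delta=ac+bd$ and has no $ef$ term. Second, drop the closing reduction to the diagonal spider move: the $|S|=2$ cases are one-edge checks against the figure's weights, not reductions.
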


\begin{proof}
    If we take $e = f = 0$, then we obtain the usual spider move. So it suffices to check the cases when a dimer cover contains a subset of edges $e$ and $f$. Let $G$ be the graph on the left of Figure \ref{fig:cross}, and $G'$ be the graph on the right. First, consider the case when only one of the two edges of $G$ is in a dimer cover, say $e$. Then this forces the edges $v_2u_2$, $v_4u_4$, and $u_1u_3$ to be in the corresponding dimer cover in $G'$. Their total weight is equal to $\frac{e}{\Delta^*}$.
    
    If both $e$ and $f$ are in a dimer cover of $G$, this corresponds to the case when all the vertices $\{v_1, v_2, v_3, v_4\}$ are matched internally. So the total contribution to the partition function of $G$ is equal to $\Delta^*$. On the other hand, this means that all the edges $u_iv_i$, for $1 \leq i \leq 4$, whose total contribution is one, are in the corresponding dimer covers in $G'$. Thus, the result follows.
\end{proof}

One of the most studied examples of a planar bipartite graph in dimer theory is the Aztec diamond, introduced in~\cite{ElkKupLarPropp}. Propp~\cite{Propp03} applied spider moves to enumerate and generate perfect matchings of the Aztec diamond. Later on, Speyer~\cite{Speyer07} used the octahedron recurrence to generalize this result to a broader class of planar bipartite graphs whose perfect matchings can be enumerated.

Similarly to the correspondence between the octahedron recurrence and the spider move, we observe that, through the factor $\Delta^*$, the cube recurrence~\cite{CS04} is related to the cross move. This suggests the possibility of finding a non-planar analog of the Aztec diamond using this new move. Moreover, the obtained formula also resembles the Kuo condensation relation~\cite{Kuo}. See also Corollary 3.2 in~\cite{Sp16}. These connections suggest that the cross move may provide a natural framework for extending the combinatorial and algebraic structures underlying perfect matching enumeration beyond the classical planar bipartite setting.

\subsection*{Acknowledgements}

I would like to warmly thank my PhD advisors, Sanjay Ramassamy and Cédric Boutillier, for their guidance and support throughout this project, as well as for their valuable discussions, suggestions, and careful reading of the manuscript. I am also grateful to Béatrice de Tilière, Philippe Di Francesco, Terrence George, Elias Nohra, and Breki Pálsson for inspiring discussions.

\bibliographystyle{plainurl}
\bibliography{references}

\end{document}